\documentclass[a4paper,12pt,oneside,reqno]{amsart}
\usepackage{hyperref}
\usepackage[headinclude,DIV13]{typearea}
\areaset{15.1cm}{25.0cm}
\parskip 0pt plus .5pt
\usepackage{txfonts,amssymb,amsmath,amsthm,bbm}

\usepackage[latin1] {inputenc}
\usepackage{graphicx, psfrag}
\usepackage{xcolor}
\usepackage{subfigure}

\newtheorem{theorem}{Theorem}[section]
\newtheorem{lemma}[theorem]{Lemma}
\newtheorem{proposition}[theorem]{Proposition}

\definecolor{bbm}{RGB}{51,153,0}
\definecolor{above}{RGB}{128,0,128}
\definecolor{below}{RGB}{102,0,204}
\definecolor{cascade}{RGB}{204,0,0}
\definecolor{iid}{RGB}{153,51,0}

\theoremstyle{remark}
\newtheorem*{remark}{Remark}

\def\paragraph#1{\noindent \textbf{#1}}

\numberwithin{equation}{section}

\def\d{\mathrm{d}}

\def\<{\langle}
\def\>{\rangle}

\def\a{\alpha}
\def\b{\beta}
\def\e{\epsilon}

\def\g{\gamma}

\def\s{\sigma}
\def\t{\tau}

\def\D{\Delta}

\def\G{\Gamma}

\def\S{\Sigma}

\def\del{\partial}
\def\R{{\Bbb R}}  
\def\N{{\Bbb N}}  
\def\P{{\Bbb P}}

\def\E{{\Bbb E}}

\let\cal=\mathcal

\def\EE{{\cal E}}
\def\FF{{\cal F}}
\def\GG{{\cal G}}
\def\HH{{\cal H}}

\def\LL{{\cal L}}

\def\TT{{\cal T}}

\def\YY{{\cal Y}}
\def\ZZ{{\cal Z}}

 \def \G {{\Gamma}}

 \def \b {{\beta}}
 \def \s {{\sigma}}

 \def \D {{\Delta}}
 \def \t {{\tau}}

 \def \g {{\gamma}}
 
 \def \d {{\delta}}
 \def \a {{\alpha}}

 \def \del {{\partial}}

 \def \ba {\begin{array}}
 \def \ea {\end{array}}



 \newcommand{\be}{\begin{equation}}
 \newcommand{\ee}{\end{equation}}

\newcommand{\bea}{\begin{eqnarray}}
 \newcommand{\eea}{\end{eqnarray}}
\def\TH(#1){\label{#1}}\def\thv(#1){\ref{#1}}
\def\Eq(#1){\label{#1}}\def\eqv(#1){(\ref{#1})}

\def\sfrac#1#2{{\textstyle{#1\over #2}}}

 \def \1{\mathbbm{1}}
\def\wt {\widetilde}


\def \zet {{\mathfrak z}}

\def\eee{{\mathrm e}}

\overfullrule=0pt

\def\tt{t^*}
\def \log{\ln}
 \def\ti{t}
\begin{document}

 \title[From Poisson to the cascade in BBM. ]
{From $1$ to $6$: a finer analysis of perturbed branching Brownian motion}
\author[A. Bovier]{Anton Bovier}
 \address{A. Bovier\\Institut f\"ur Angewandte Mathematik\\
Rheinische Friedrich-Wilhelms-Universität\\ Endenicher Allee 60\\ 53115 Bonn, Germany }
\email{bovier@uni-bonn.de}
\author[L. Hartung]{Lisa Hartung}

 \address{L. Hartung\\ Institut für Mathematik \\Johannes Gutenberg-Universität Mainz\\
Staudingerweg 9,
55099 Mainz, Germany}
\email{lhartung@uni-mainz.de}


\subjclass[2000]{60J80, 60G70, 82B44} \keywords{branching Brownian motion, extremal processes, extreme values, cluster processes, spin glasses} 

\date{\today}

 \begin{abstract} 
The logarithmic correction for the order of the maximum for two-speed branching Brownian motion changes discontinuously when approaching slopes $\sigma_1^2=\sigma_2^2=1$ which corresponds to standard branching Brownian motion. 
In this article we study this transition more closely by choosing $\s_1^2=1\pm t^{-\alpha}$ and $\s_2^2=1\pm t^{-\alpha}$. We show that the logarithmic correction for the order of the maximum now smoothly interpolates between the correction in the iid case $\frac{1}{2\sqrt 2}\ln(t),\;\frac{3}{2\sqrt 2}\ln(t)$ and $\frac{6}{2\sqrt 2}\ln(t)$ when $0<\a<\frac{1}{2}$. This is due to the localisation of extremal particles at the time of speed change which depends on $\a$ and differs from the one in standard branching Brownian motion.
We also establish in all cases the asymptotic law of the maximum and characterise the extremal process, which turns out to coincide essentially with that of standard branching Brownian motion. 

  \end{abstract}

\thanks{A.B. is partially supported through the German Research Foundation in 
the Collaborative Research Center 1060  ``The Mathematics of Emergent Effects" and Germany's  Excellence Strategy  -- EXC 2047 --  ``Hausdorff Center for Mathematics" at Bonn University.  This work was done during visits of A.B. at the Courant Institute, N.Y. and L.H. at the IAM at Bonn University
while L.H. was Courant Instructor. We thank both institutions for their hospitality.
}

 \maketitle


\section{Introduction}

So-called log-correlated (Gaussian) processes have received considerable attention over the last 
years,
see e.g. \cite{kistler2015,arguin2017,ABBS,BisLou16,BisLou18}. One of the reasons for this is that they represent processes where the correlations are 
on the 
borderline of becoming relevant for the properties of the extremes of the process. A paradigmatic 
example for such processes is branching Brownian motion (BBM) \cite{Moyal62,AdkeMoyal63}.
This process has been intensly investigated form the point of view of extreme value theory 
over the last 40 year, see,  e.g.,
\cite{B_M,LS,chauvin88,chauvin90,ABK_G,ABK_P,ABK_E,ABBS,CHL17,bbm-book}.
To understand what we mean by BBM being borderline, it is useful to 
consider BBM as a special case of a class of Gaussian processes labelled by a function 
 $A:[0,1]\to [0,1]$ with $A(0)=0,A(1)=1$ which is increasing and right-continuous.  
 Given such a function, so-called \emph{variable speed} branching Brownian motion \cite{DerriSpohn88,FZ_BM, MZ,BovHar13,BH14.1}
 can then be constructed in two equivalent  ways\footnote{Actually, it can be constructed in three 
 different ways: instead of making a time change in the Brownian motions, one can alternatively make 
 the branching rates explicitly time-dependent.}.

Fix a time horizon $t$ and let 
\be\Eq(intspeed.2)
\S^2_t(s)= t A(s/t),\quad s\in [0,t].
\ee
Define Brownian motion with speed function $\S^2_t$ as a time change of ordinary Brownian motion on  
$[0,t]$ as 
\be\Eq(speedy.1)
B^\S_s=B_{\S^2_t(s)}.
\ee
Branching Brownian motion with speed function $\S^2_t$ is 
constructed like ordinary branching Brownian motion except that, if a particle 
splits at some time $s<t$, then the offspring particles perform 
variable speed Brownian motion with speed function $\S^2_t$, i.e. 
their laws are independent copies 
$\{B^\S_r-B^\S_s\}_{t\geq r\geq s}$, all starting at the position of the 
parent particle at time $s$.
We assume here and throughout this paper that particles in BBM branch after an exponential time of parameter one with 
probability $p_k$ into $k$ independent copies of themselves where the branching law $p_k$ 
satisfies  $\sum_{i=1}^\infty p_k=1$,
$\sum_{k=1}^\infty k p_k=2$ and $K=\sum_{k=1}^\infty k(k-1)p_k<\infty$
This ensures, in particular that process cannot die out. It also normalises that number of particles at time 
$t$, $n(t)$ to satisfy $\E [n(t)]=\eee^t$.

Alternatively, variable speed BBM can be constructed as a Gaussian process indexed by a 
continuous time 
Galton-Watson tree with mean zero and covariances
\be\Eq(variance.2.1)
\E\left[ x_k(s)x_\ell(r)\right]= \S^2_t\left(d(x_k(t),x_\ell(t))\wedge s\wedge r\right).
\ee
where the $x_k$ label the $n(t)$ particles present at time $t$ and $d(x_k(t),x_\ell(t))$ is the 
time of the most recent common ancestor of the particles labeled $k$ and $\ell$ in the
Galton-Watson tree.

The authors interest in this model was actually sparked by the above definition. As it coincides with the generalised random energy model (GREM) introduced by Derrida \cite{GD86b} (and \cite{DerriSpohn88}  on a continuous time Galton-Watson tree. These models where introduced as toy models for spin glasses for which in particular the structure of extreme values is important. In particular, the interplay between the structure of extremes and the covariance function is a major goal. In view of this, understanding these relevant questions on a tree (where correlations are easier to handle) is a key step. A first analysis on the order of the maximum for step functions was carried out in \cite{BK1,BK2}. Already in this work  the phase transition happening at the identity function (which is described in more detail below) is visible. This is a main motivation for the study of arbitrary covariance functions and in particular  this work as it sheds light on how this transition exactly happens on a microscopic level. 

After this small detour let us now connect the two definitions of branching Brownian motion.
The case $A(x)=x$ corresponds to standard Brownian motion. 
The behaviour of the extremes of these processes are dramatically different according to whether 
$A$ stays below $x$ or whether it crosses this line. 
\begin{itemize}
\item [(i)] if $A(x)<x$ for all $x\in (0,1)$, then to first sub-leading order, 
\be\Eq(order.1)
\max_{k\leq n(t)} x_k(t) \approx \sqrt 2t -\frac 1{2\sqrt 2} \ln t,
\ee
\item [(ii)] if $A(x)=x$, then Bramson \cite{B_M,B_C} has shown that
\be\Eq(order.2)
\max_{k\leq n(t)} x_k(t) \approx \sqrt 2t -\frac 3{2\sqrt 2} \ln t,
\ee
\item [(iii)]
if for some $x\in (0,1)$, $A(x)>x$, then to leading order
\be\Eq(order.1.1)
\max_{k\leq n(t)} x_k(t) \approx \sqrt 2 t\int_0^1 \sqrt {\bar A'(y)}dy,
\ee
where $\bar A$ denotes the concave hull of the function $A$. 
The sub-leading corrections depend on the details of the function $\bar A$.
For instance, if $A$ is piecewise linear with slopes $\s_1^2$ and $\s_2^2$ 
(and necessarily $\s_1^2>\s_2^2$ to be in this sub-case)on $[0,1/2)$, resp. $[1/2,1]$, 
then the correction is given by (see e.g. \cite{FZ_BM})
\be\Eq(log.1)
-\frac 3{2\sqrt 2}  ({\s_1+\s_2}) \ln t.
\ee
\end{itemize} 
Note that, as a functional of the function $A$,  the linear term  in $t$ is continuous, but the coefficient 
multiplying $\ln t$ is discontinuous at the function $A(x)=x$. 
For instance, in the example above with two speeds,  the
limit of this coefficient is
 \be\Eq(better.1)
 \begin{cases}   
 \frac 1{2\sqrt 2},&\;\text{if}\; \s_1^2\uparrow 1,\\
  \frac 3{2\sqrt 2},&\;\text{if}\;  \s_1^2= 1,\\
  \frac 6{2\sqrt 2},&\;\text{if}\  \s_1^2\downarrow 1.
 \end{cases}
 \ee
If different sequences of functions $A$ that converge to $A(x)=x$ from above
are considered, a huge variety of limiting values can be produced. 

Branching Brownian motion has strong connections to the F-KPP equation which is a well-known reaction diffusion equation admitting travelling wave solutions, 
\be\Eq(fkpp.2)
\del_tu = {1\over 2} \del_x^2u +  F(u), 
\ee
where $F$ depends on the branching law. 
This connection can be extended to variable speed branching Brownian motion in which case one obtains  the time-inhomogeneous F-KPP equation, 
\be\Eq(lisa.102)
\del_su_t = {1\over 2}\s^2(s/t) \del_x^2u_t +  F(u_t), 
\ee
where $\s^s(s/t)=\del_s \S_t^2(s)$.
Note that \eqv(lisa.102)  is really a family of pdes indexed by $t\in \R_+$, and $u_t:[0,T]\times \R\rightarrow \R$. Eq. \eqv(lisa.102) was 
studied in \cite{NRR2015}. While in the standard F-KPP case the issue is to find a scale function $m(s)$ such that, for suitable initial conditions
$us,x+m(s))$ converges to a travelling wave, in the time inhomogeneous case where are strictly speaking no travelling waves. However, one can still analyse the
"front" position by defining $X(t)=\sup(x:u_t(t,x)=1/2)$ and show that
$u_t(t,x+X(t))$ converges to some limiting profile. By \eqv(fkpp.2), this then still gives the law of the maximum, resp. other functionals related to 
variable speed BBM.

Further properties, in particular the laws of the rescaled maxima and the extremal processes
are fully understood in the cases when $A(x)\leq x$ for all $x\in [0,1]$ and in the case when 
$\bar A$ is a piecewise linear function \cite{BovHar13,BH14.1}.. 

In this paper we have a closer look at the apparent discontinuities that happen when $A$ crosses the identity line (see \eqv(better.1)) . 
 For this, we consider functions $A=A_t$ that depend explicitly on the 
time horizon $t$. Kistler and Schmidt \cite{KistSchmi15} have considered the case then 
$A_t$ is a step function with step sizes $t^\a$ and step heights $t^\a$ that converges to $A(x)=x$ from 
below. They showed that in this case, the logarithmic correction is 
given by $\frac {3-2\a}{2\sqrt 2}\ln t$ which interpolates nicely between the cases (i) and (ii). 

Here we consider piecewise linear functions that 
lie slightly above or below $A(x)=x$.  
More precisely, we restrict ourselves to the simplest example, where
\be\Eq(function.1)
A_t(x)=\begin{cases}
     \s_1^2(t) x, &\text{if}\, x<1/2,\\
      \s_1^2(t)/2+\s_2^2(t)(x-1/2),&\text{if}\, x\geq 1/2,
     \end{cases}
     \ee
     with $\s_1^2(t)=1\pm t^{-\a}$ and $\s_2^2(t)=1\mp t^{-\a}$.
    Different cases can be treated using essentially the same techniques, if necessary in an iterative way.

     In this case, we will show that 
    \begin{itemize} 
    \item [(i)] If $\s_1^2(t)=1-t^{-\a}$, the leading term is $\sqrt 2t$ for all $\a>0$, and the 
    logarithmic corrections are
    \be\Eq(correction-below.1)
  -  \begin{cases} \frac {1+4\a}{2\sqrt 2}\ln t,&\;\; \text{if}\,\, \a\in (0,1/2],
    \\ \frac {3}{2\sqrt 2}\ln t,&\;\; \text{if}\,\, \a\in [1/2,\infty),
    \end{cases}
    \ee
    \item[(ii)]
    If $\s_1^2(t)=1+t^{-\a}$,
     the leading term is $\sqrt 2 \frac {\s_1+\s_2}2t$\footnote{Note that $
         \sqrt 2 \frac {\s_1+\s_2}2t \approx \sqrt 2 (t+t^{1-2\a})$,  which is already different
         from the BBM case if $\a\leq 1/2$.},
      and the logarithmic correction is 
    \be \Eq(correction.1)
 -   \begin{cases}
    \frac 3{2\sqrt 2}  (\s_1 +\s_2(1-2\a))\ln t \approx    \frac 3{2\sqrt 2}  (2 -2\a)\ln t ,  &\text{ if}\, \a\in [0,1/2),\\
     \frac 3{2\sqrt 2}  \s_1 \ln t\approx    \frac 3{2\sqrt 2}\ln t, &\text{ if}\, \a\geq 1/2.
     \end{cases}
     \ee
          \end{itemize}
          
Interpreting this result in context of the F-KPP equation this hints at a continuity result for the speed of the front positions. 
   
   \subsection*{Localisation.}
The key observation that will be needed to prove this and more detailed facts is
a localisation result on the position of the ancestors of extremal particles a time $t/2$. 
It is known that in the case when $\s_1^2=1+O(1)$, the ancestors of extremal particles at time $t$ 
are also extremal at time $t/2$, and so are just a logarithm of $t$ below $\sqrt 2t \s_1$. For standard 
BBM, these particles will be $O(\sqrt t)$ below $\sqrt 2 t/2$. In the case $\s_1^2=1-O(1)$, 
these particles are even  further below, namely by $\sqrt 2(\s_1-\s_1^2)t/2$ \cite{BovHar13}.
We will show (in Chapters 3 and 4, resp.), that the ancestors of extremal particles at time $t$ are 
below $\sqrt  2\s_1 t/2$ by $O(t^\a)$, in the case $\s_1^2=1+t^{-\a}$, and 
by $\sqrt{2}t^{1-\a}/4+O(\sqrt{t})$, in the case $\s_1^2=1-t^{-\a}$, when $\a\in (0,1/2]$(see Figure \ref{loc}). 
\begin{figure}\label{loc}
\begin{center}\hspace{-1cm}
\includegraphics[width=8.5cm]{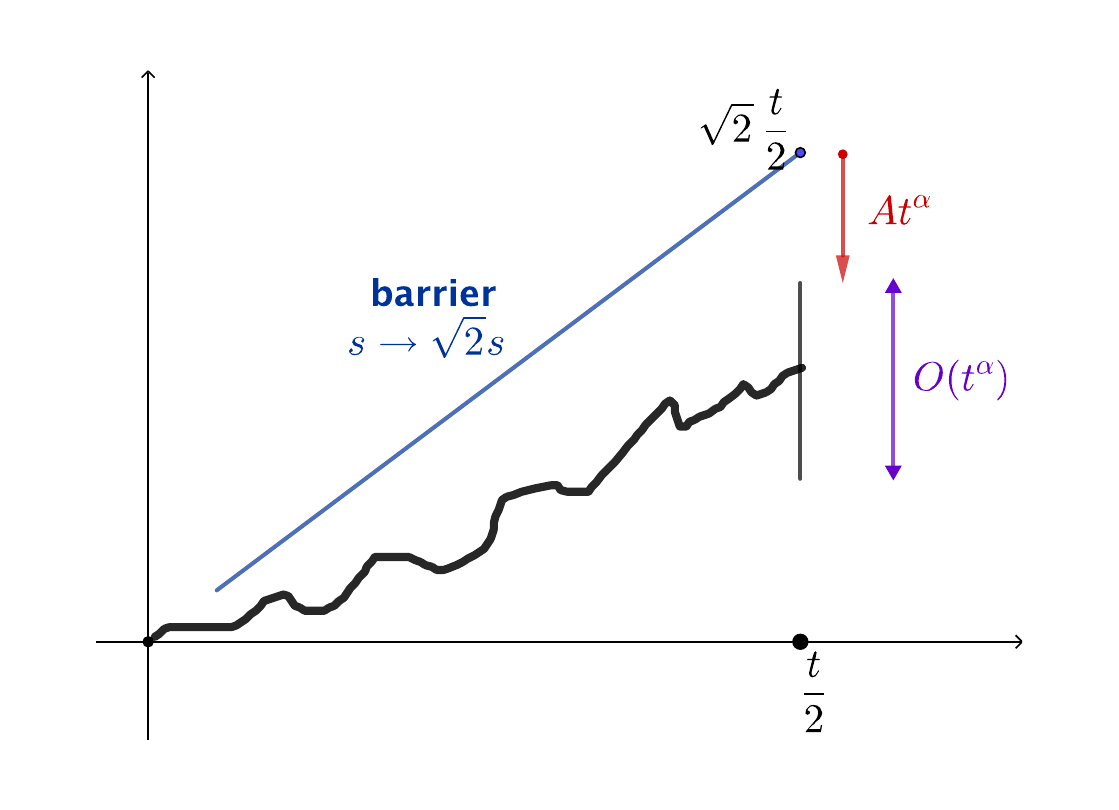}\hspace{-1.5cm}
\includegraphics[width=8.5cm]{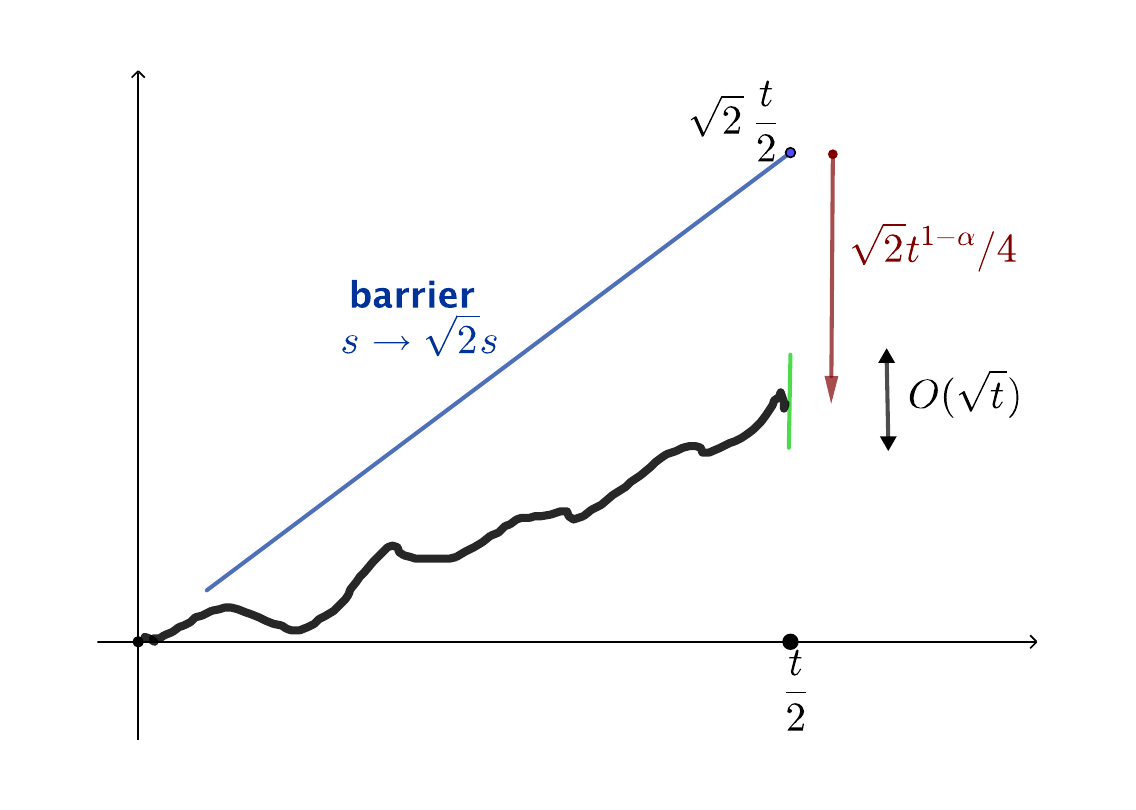}
\caption {Localisation: If the speeds are decreasing (left) then an extremal particle is $O(t^\alpha)$ below the maximum at the time of the speed change. Until this time it has to stay below the barrier $s\to\sqrt{2}s,\;s>r$. In the case of increasing speeds (right) an extremal particle is $\sqrt{2}t^{1-\a}/4\pm O(\sqrt{t})$ below the maximum at the time of the speed change. Until then it has again to stay below the barrier. }
\end{center}
\end{figure}
Afficionados of BBM will readily infer \eqv(correction-below.1) and \eqv(correction.1)
 from this information.
To actually prove this is, however, a bit more delicate.  The basic strategy is similar to that used in 
the 
case of two-speed BBM with $\s_1^2<1$ in \cite{BovHar13}, but there are some interesting twists.

Apart from the analysis of the log-correction to the value of the maximum, we also
analyse the law of the maximum and the  nature of the extremal process in 
these cases. Of course, in both cases the law of the maximum converges to 
a randomly shifted Gumbel distribution. Less obviously, whenever $\a\in (0,1/2)$, 
the random shift is always given by the \emph{derivative martingale} (see \eqv(derimar.1) below).  The extremal process has the same 
structure as in BBM, i.e. a decorated Cox-process, where the decoration process is independent of  $\a$.

In the remainder of this paper, when we consider the case $\s_1>\s_2$, we always set
$
\s_1^2=1+t^{-\a}, \s_2^2=1-t^{-\a}$, and 
\be\Eq(m.1)
m(t)=m_\a^+(t)=\sqrt 2\frac {\s_1+\s_2}2 t-\frac 3{2\sqrt 2}(2-2\a)
\ln \ti
\ee
In the case $\s_1<\s_2$, we will set 
\be\Eq(b.1)
m(t)=m_\a^-(t)=\sqrt 2 t -\frac {1+4\a}{2\sqrt 2}\ln t, 
\ee
In both cases, this is correct for $0<\a\leq 1/2$. If $\a>1/2$, all is exactly as in 
standard BBM.

We will denote particles of two speed BBM with variances 
$\s_1^2$ on $[0,t/2]$ and $\s_2^2$ on $[t/2,t]$ by 
$\tilde x_k(s)$ and those of standard BBM by $x_k(s)$. 

Before stating the main result of this paper, let us recall the two key martingales that were introduced by Lalley and Sellke \cite{LS},
the derivative martingale, $Z(t)$, and (what we like to call) the McKean martingale, $Y_\s(t)$. 
They are defined in terms of a standard BBM $x(t)$ via
\be\Eq(derimar.1)
Z(t)\equiv \sum_{k=1}^{n(t)}(\sqrt 2 t-x_k(t))\eee^{\sqrt 2(x_k(t)-\sqrt 2t)},
\ee
and 
\be
\Eq(derimar.2)
Y_{\s}(t)\equiv \sum_{k=1}^{n(t)}\eee^{\sqrt 2 \s x_k(t)-(1+\s^2)t},
\ee
Lalley and Sellke have shown that $Z(t)$ converges, as $t\uparrow 
\infty$, to an a.s.\ positive random variable $Z$, while 
for $\s\geq1$, $Y_\s(t)$ converges a.s.\ to zero. On the other hand, if
$\s<1$, then $Y_\s(t)$ is uniformly integrable and converges to a random variable $Y_{\s}$ 
(see \cite{BovHar13}).

We can now state the main results of this paper.

\begin{theorem}\TH(main.1)
Let $\tilde x(t)$ be two-speed BBM with $\s_1^2(t)=1\pm t^{-\a}$ and $\s_2^2=1\mp t^{-\a}$. Then, if $\a\in (0,1/2)$, 
for all $y\in \R$,
\be\Eq(main.1.1)
\lim_{t\uparrow\infty}\P\left(\max_{k\leq n(t)} \tilde x_k(t)-m_\a^\pm(t)\leq y\right)
= \begin{cases}
\E_Z \Bigl[\eee^{-\frac{2C Z}{\sqrt{\pi}}\eee^{-\sqrt 2y}}\Bigr], &\, \text{in the $+$ case},\\
\E_Z \Bigl[\eee^{-{C Z}\eee^{-\sqrt 2y}}\Bigr], &\, \text{in the $-$ case},
\end{cases}
\ee
where $Z$ is the limit of the  derivative martingale
(c.f. \eqv(derimar.1) and $C$ is the positive constant
\be
C= \lim_{r\uparrow\infty} \sqrt {\frac 2\pi} \int_0^\infty u(r,y+\sqrt 2r) \eee^{\sqrt 2y}dy,
\ee
where $u$ is the solution of the F-KPP equation with Heaviside initial conditions.
\end{theorem}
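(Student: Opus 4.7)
Following the strategy of \cite{BovHar13}, I would condition on the $\sigma$-algebra $\FF_{t/2}$ generated by the BBM up to the speed-change time and exploit the branching Markov property. Conditionally on $\FF_{t/2}$ the descendants of each particle $\tilde x_k(t/2)$ form an independent BBM with diffusion coefficient $\s_2^2$ on $[t/2,t]$, which by spatial rescaling is $\s_2$ times a standard BBM. Denoting by $u$ the F-KPP solution with Heaviside initial data, this yields
\begin{equation*}
\P\bigl(\max_k\tilde x_k(t)\le m(t)+y\bigm|\FF_{t/2}\bigr)
=\prod_{k=1}^{n(t/2)}\Bigl(1-u\bigl(t/2,(m(t)+y-\tilde x_k(t/2))/\s_2\bigr)\Bigr).
\end{equation*}
A standard Bramson-type barrier argument shows that, with high probability, no individual factor of the product is close to $1$, so the theorem reduces to identifying the limit in probability of the random sum $S_t(y):=\sum_{k=1}^{n(t/2)}u(t/2,(m(t)+y-\tilde x_k(t/2))/\s_2)$; the target limits are $CZ\eee^{-\sqrt 2 y}$ in the $-$ case and $(2C/\sqrt\pi)Z\eee^{-\sqrt 2 y}$ in the $+$ case.

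The core step is a localisation plus sharp F-KPP asymptotics argument. Using the localisation results from Sections~3 and 4, $S_t(y)$ is reduced, up to a vanishing error, to a sum over indices $k$ for which $\tilde x_k(t/2)$ lies in a prescribed window: an $O(t^\a)$-band around $\sqrt 2\s_1 t/2$ in the $+$ case, and an $O(\sqrt t)$-band around $\sqrt 2\s_1 t/2-\sqrt 2 t^{1-\a}/4$ in the $-$ case. Writing $z_k:=(m(t)+y-\tilde x_k(t/2))/\s_2-\sqrt 2\,t/2$ and Taylor-expanding $m(t)$, $\s_1$ and $\s_2$ in powers of $t^{-\a}$, the dominant $t^{1-2\a}$-contributions to $z_k$ cancel exactly---this cancellation is precisely what forces the log-correction exponents of \eqv(correction-below.1) and \eqv(correction.1)---leaving $z_k$ at scale $t^\a$ in the $+$ case and centred at $\sqrt 2\, t^{1-\a}/4$ with $\sqrt t$-scale fluctuations in the $-$ case. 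Substituting Bramson's sharp asymptotic for $u(t/2,\sqrt 2\,(t/2)+z_k)$ valid in the relevant regime (the polynomial tail regime in the $+$ case; the Gaussian regime, whose integrated form is the constant $C$ defined in the theorem, in the $-$ case) converts $S_t(y)$ into a sum whose typical summand is a product of an exponential tilt $\eee^{\sqrt 2(x_k(t/2)-\sqrt 2\,t/2)}$, a linear factor, and a residual Gaussian weight in the window fluctuation.

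Finally, using the in-law identity $\tilde x_k(s)=\s_1 x_k(s)$ with $x_k$ a standard BBM, one recognises the rearranged sum as the derivative martingale $Z(t/2)$ of \eqv(derimar.1) integrated against a Gaussian kernel over the displacement of $x_k(t/2)$ from the window centre. Combining Lalley--Sellke's a.s.\ convergence $Z(t/2)\to Z$ with dominated convergence, $S_t(y)$ converges in probability to $Z\eee^{-\sqrt 2 y}$ times a deterministic prefactor, which evaluates to $C$ in the $-$ case (where the $\sqrt t$-wide window captures the full Gaussian mass, yielding exactly the integrated form defining $C$) and to $2C/\sqrt\pi$ in the $+$ case (where the narrower $t^\a$-window produces a different Gaussian normalisation). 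The main technical obstacle is to establish the sharp Bramson tail asymptotic uniformly in the Gaussian regime $z\in[0,Bt^{1-\a}]$ needed for the $-$ case, which goes well beyond the intermediate-regime estimates used in \cite{BovHar13}; I would expect this to rely on a Feynman--Kac representation of $u$ combined with killing-barrier estimates for the linearised F-KPP operator.
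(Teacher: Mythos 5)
Your plan opens as the paper does: condition on $\FF_{t/2}$, rewrite the conditional survival probability via the F-KPP solution, localise the ancestors at $t/2$, and feed in the sharp tail asymptotics of Proposition \thv(A.Prop1) (which, contrary to your closing worry, is already stated for any $x(t)=o(t)$ and so covers the regime $x\sim t^{1-\a}$ arising in the $-$ case). The genuine gap is in the convergence step. After substituting the tail asymptotics, each summand in your $S_t(y)$ carries the weight $\eee^{-\sqrt 2(\s_1/\s_2)(\sqrt 2\, t/2 - x_k(t/2))}$, together with a power of $t$ coming from the modified log-correction, and is restricted to the localisation window. This object is \emph{not} the derivative martingale $Z(t/2)$ integrated against any deterministic kernel: the tilt $(\s_1/\s_2-1)\sim\pm t^{-\a}$ multiplies displacements $\sqrt 2\, t/2 - x_k(t/2)$ that range over scales $t^\a$ (the $+$ window) up to $\sqrt t$ (where $Z(t/2)$ collects its mass), so the perturbation $(\s_1/\s_2-1)(\sqrt 2\, t/2 - x_k(t/2))$ is either of order one or diverges when $\a<1/2$, and the tilted sum is not asymptotically a deterministic multiple of $Z(t/2)$. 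An appeal to Lalley--Sellke plus dominated convergence therefore does not close the argument.

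The paper resolves this with a \emph{second} conditioning at a much smaller time $t^\b$ (see \eqv(prep.10)--\eqv(prep.16) and \eqv(B.prep.10)--\eqv(B.prep.15)). Conditionally on $\FF_{t^\b}$, the inner sums $\YY_k(t),\ZZ_k(t)$ over the interval $[t^\b,t/2]$ have explicit Gaussian-integral means (Lemmas \thv(expectations.1) and \thv(expectations.4)); this is where the constants $2C/\sqrt\pi$ (the $\ZZ$-term dominates in the $+$ case) and $C$ (only the $\YY$-term is present in the $-$ case) actually come from, and the ratio $\E[x^2|\FF_{t^\b}]/\E[x|\FF_{t^\b}]$ in \eqv(prep.18), which legitimises the passage from the product to the exponential, is controlled precisely because of this split. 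The surviving outer sum over particles at time $t^\b$ still carries the tilt $\s_1/\s_2$, but there the relevant displacements are $O(t^{\b\d})$ with $\b\d<\a$, so the tilt is asymptotically negligible; proving that this $\FF_{t^\b}$-measurable sum converges in probability to the limit $Z$ of the \emph{standard} derivative martingale is the content of Lemma \thv(almost.deriv). Your proposal is missing this two-stage decomposition, which is the mechanism that actually produces $Z$ in the limit, and without it the concentration and identification steps do not go through.
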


Similarly, we get the convergence of Laplace functionals, that then imply the convergence of the 
extremal process.

\begin{theorem}\TH(main.2)
Under the same hypotheses as in Theorem \thv(main.1), for any  bounded 
non-negative function, $\phi$, with with compact support, for all $y\in \R$,
\be\Eq(main.2.1)
\lim_{t\uparrow\infty}\E\left[\eee^{-\sum_{k=1}^{n(t)} \phi(\tilde x_k(t)-m_\a^{\pm}-y)}\right]
= \begin{cases}
\E_Z \Bigl[\eee^{-\frac{2C(\phi)Z}{\sqrt{\pi}}\eee^{-\sqrt 2y}}\Bigr], &\, \text{in the $+$ case},\\
\E_Z \Bigl[\eee^{-C (\phi)Z\eee^{-\sqrt 2y}}\Bigr], &\, \text{in the $-$ case},
\end{cases}
\ee
where  $Z$ is the limit of the derivative martingale and $C(\phi)$ is the positive constant
\be
C(\phi)= \lim_{r\uparrow\infty} \sqrt {\frac 2\pi} \int_0^\infty u(r,y+\sqrt 2r) \eee^{\sqrt 2y}dy,
\ee
where $u$ is the solution of the F-KPP equation with initial condition $u(y,0) =\exp(-\phi(-y))$.
\end{theorem}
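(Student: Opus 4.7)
The plan is to follow the strategy used for Theorem \thv(main.1), replacing the Heaviside initial condition of the associated F-KPP equation by the smooth initial data $u(y,0)=\eee^{-\phi(-y)}$. The starting point is the branching Markov property at time $t/2$: conditionally on $\cF_{t/2}$, the particles alive at time $t$ decompose into $n(t/2)$ independent clusters, each evolving for an additional time $t/2$ as a BBM of variance $\s_2^2$ started at $\tilde x_k(t/2)$. Hence
\be
\E\Bigl[\eee^{-\sum_{k=1}^{n(t)}\phi(\tilde x_k(t)-m_\a^\pm(t)-y)}\Bigr]
=\E\Bigl[\prod_{k=1}^{n(t/2)} \Psi_{t/2}\bigl(\tilde x_k(t/2)-m_\a^\pm(t)-y\bigr)\Bigr],
\ee
where $\Psi_s(z)=\E\bigl[\exp\bigl(-\sum_{j=1}^{n(s)}\phi(z+\s_2 x_j(s))\bigr)\bigr]$ and $\{x_j\}$ is a standard BBM.

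Next I would invoke the Bramson--Lalley--Sellke convergence theorem for F-KPP with general bounded initial data (see \cite{B_M,LS,ABK_E,bbm-book}): the function $1-\Psi_s$ solves an F-KPP equation with initial data $1-\eee^{-\phi(-\cdot/\s_2)}$, and, recentering at $m^{BBM}(s)=\sqrt 2 s-\tfrac{3}{2\sqrt 2}\log s$, one has $\Psi_s(-m^{BBM}(s)-z)\to \E\bigl[\exp(-C(\phi)Z\eee^{-\sqrt 2 z})\bigr]$ for every $z\in\R$, together with the uniform travelling-wave tail estimate $1-\Psi_s(-m^{BBM}(s)-z)\le K(1+z)\eee^{-\sqrt 2 z}$ for $s\ge 1$ and $z\ge 0$. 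The constant $C(\phi)$ is the standard Bramson constant and admits precisely the integral representation displayed in the statement. Because $\s_2^2-1=\mp t^{-\a}$ is polynomially small, the $\s_2$-dilation in the initial data perturbs $C(\phi)$ only by a multiplicative $1+o(1)$ factor, so $\Psi_{t/2}$ may be replaced by its standard-BBM counterpart in the limit.

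Substituting this asymptotic into the product and using $\log(1-x)\simeq -x$, the remaining task is to compute the limit of $\sum_k\bigl(1-\Psi_{t/2}(\tilde x_k(t/2)-m_\a^\pm(t)-y)\bigr)$, which by the localisation result stated before Theorem \thv(main.1) concentrates on the time-$t/2$ particles in a specific window. In the $+$ case the relevant ancestors lie $O(t^\a)$ below $\sqrt 2 \s_1 t/2$ and are constrained to stay below the line $s\mapsto\sqrt 2 s$ on $[0,t/2]$; the Gaussian density together with this one-sided ballot condition integrates to a half-Gaussian, producing the extra $2/\sqrt\pi$ prefactor. In the $-$ case the ancestors sit $\sqrt 2 t^{1-\a}/4+O(\sqrt t)$ below $\sqrt 2 \s_1 t/2$, the barrier becomes inactive on the dominant scale, and no such prefactor appears. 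In both cases, plugging in the tail asymptotic of $1-\Psi_{t/2}$, the remaining weighted sum converges in law to a multiple of the derivative martingale limit $Z$ of \eqv(derimar.1), via the McKean-martingale identity \eqv(derimar.2) and the martingale convergence arguments of \cite{BovHar13}, producing the expressions on the right-hand side of \eqv(main.2.1).

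The main technical obstacle, exactly as in Theorem \thv(main.1), is to justify the interchange of the limit $t\to\infty$ with the F-KPP asymptotic for the cluster: this requires the uniform upper bound $1-\Psi_s(-m^{BBM}(s)-z)\le K(\phi)(1+z)\eee^{-\sqrt 2 z}$ together with sharp ballot-type estimates controlling the contribution of time-$t/2$ particles outside the localisation window. Since the cluster decoration enters the limit only through the single constant $C(\phi)$, and since $\phi$ has compact support (providing tightness of the extremal configuration and allowing a reduction to a truncated Heaviside bound), these estimates are cosmetic adaptations of those established in the proof of Theorem \thv(main.1); the passage from maxima to general bounded Laplace functionals with compact support is then routine.
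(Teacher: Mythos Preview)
Your outline follows the paper's structure: branch at $t/2$, invoke F-KPP tail asymptotics for the inner conditional expectation, then rerun the localisation and martingale computations of Sections~3--4 with the constant $C$ replaced by $C(\phi)$. That is indeed how the paper proceeds, and your remark that ``the passage from maxima to general bounded Laplace functionals \ldots\ is then routine'' is correct once the right tail asymptotic is in hand.

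The gap is in how you handle the $t$-dependence of the initial condition. You write that ``the $\s_2$-dilation in the initial data perturbs $C(\phi)$ only by a multiplicative $1+o(1)$ factor'' and leave it at that. But the object you need is $u^t(t/2,x(t))$, where $u^t$ solves F-KPP with initial data $1-\eee^{-\phi(-\s_2(t)\,\cdot)}$: both the time \emph{and} the initial condition depend on $t$, so pointwise convergence $C(\phi^t)\to C(\phi)$ does not by itself justify replacing $u^t$ by $u$ in the asymptotic. This is precisely the one new technical point in the paper's Section~5, and essentially the entire section is devoted to it. The tool is the maximum principle for F-KPP (Lemma~\thv(laplace.4)): since $f^t(x)=\eee^{-\phi(-\s_2(t)x)}$ is monotone in $t$ on the relevant half-line, one obtains the sandwich $v(s,x)\le v^t(s,x)\le v^{t_0}(s,x)$ (or the reverse), uniformly in $s$. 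This sandwich is what allows dominated convergence in the Bramson integral representation (Proposition~\thv(C.Prop1)) and hence the tail asymptotic \eqv(tail.1.1) with the \emph{limiting} constant $C(\phi)$. Your ``uniform travelling-wave tail estimate'' $1-\Psi_s\le K(\phi)(1+z)\eee^{-\sqrt 2 z}$ is exactly the kind of bound the maximum principle delivers, but you have not said how to get it uniformly in the moving initial data.

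Two smaller remarks. Your heuristic for the $2/\sqrt\pi$ prefactor in the $+$ case (``half-Gaussian from the ballot condition'') is not how the constant actually arises in Section~3; it comes out of the explicit integral \eqv(mckean.2)--\eqv(derivative.1) over the $O(t^\a)$ localisation window, not from a half-Gaussian. And in the $-$ case the barrier is \emph{not} inactive: the bridge factor \eqv(simpler.1) is what produces the crucial $(\sqrt 2 t^\b-x_k(t^\b))$ that rebuilds the derivative martingale; without it you would get the McKean martingale instead (cf.\ the footnote after \eqv(simpler.1)). These do not affect the Laplace-functional argument per se, since you correctly defer to Theorem~\thv(main.1), but they indicate that the mechanism in each case is not quite what you describe.
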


\begin{remark} Theorem \thv(main.2) implies that the extremal process is, up to a constant shift,  always the same as that of standard BBM 
(see \cite{ABK_E}), if $\a>0$.
\end{remark}

\begin{figure}
\begin{center}
\includegraphics[width=12cm]{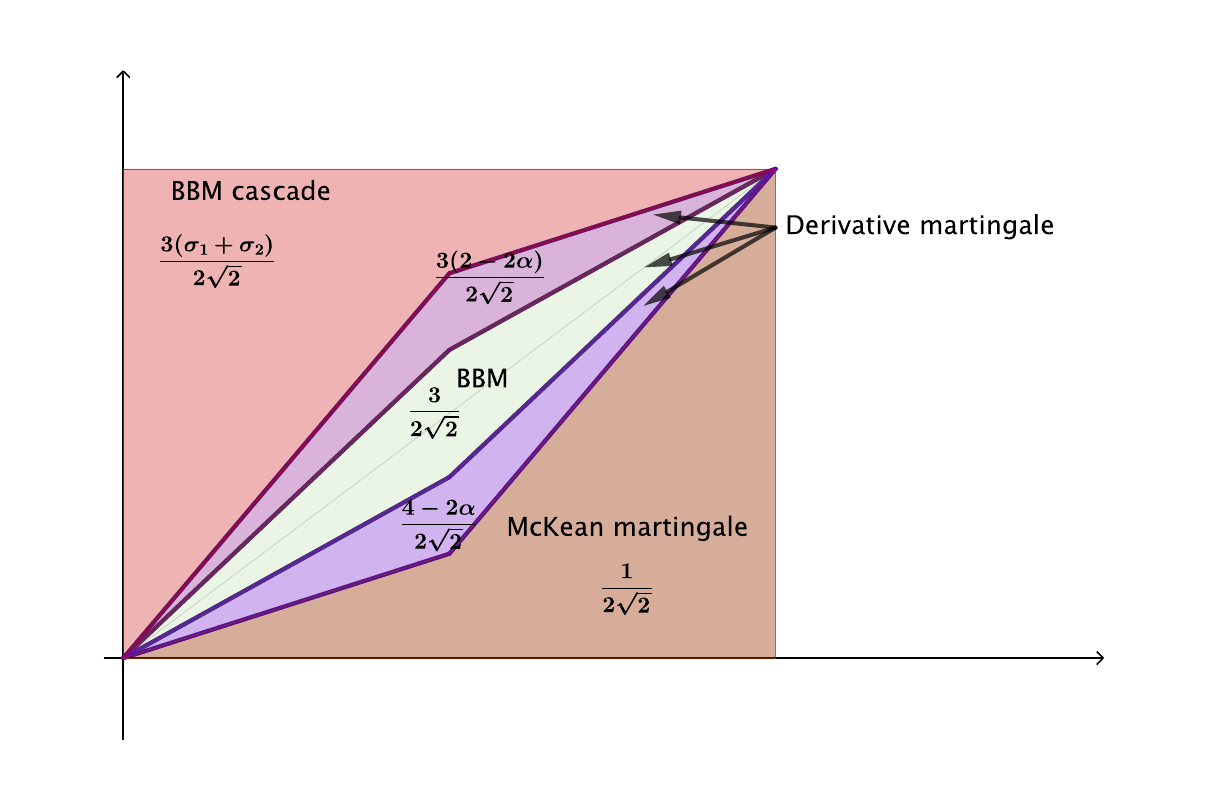}
\caption
{Phase diagram of two-speed BBM. In the {\color{bbm} inner phase ($\a>\frac{1}{2}$)}, everything is as in standard BBM. 
In the {\color{cascade} north-west regime}, the order of the maximum and the extremal process are a concatenation of two such processes
 for standard BBM. In the {\color{above} regime in between ($0<\a<\frac{1}{2}$)}, the order of the maximum interpolates smoothly between the surrounding regimes. 
In the {\color{iid} south-east regime}, the order of the maximum coincides with the one in the iid case. The extremal process is similar as the one for BBM but the martingale appearing is different. In the {\color{below} regime, with $\s_1^2=1-t^{-\a}$, $0<\a<\frac{1}{2}$ }, the order of the maximum interpolates smoothly between the iid and the BBM order of the maximum. Observe that in the three middle regimes the extremal process coincides up to constant shift with the one of standard BBM and the martingale is always the derivative martingale.}
\end{center}
\end{figure}

\subsection*{Outline of the paper.} The the remainder of this paper is organised as follows. In Section 2 we recall some facts on the tail behaviour of 
solutions of the F-KPP equation that form the crucial input in the analysis. The two following Sections 3 and 4
contain the 
proof of Theorem \thv(main.1). We deal separately with the cases $\s_1>1$ and $\s_1<1$. 
The structure of the proof is the same in both cases, but the details of the calculations are different and it appears 
easier to follow the arguments in each case rather then to jump back and forth. The way both chapters are 
organised is as follows. First, we show where the extremal particles are localised at the change-time $t/2$.
Then we exploit the branching property at time $t/2$ to set up a recursion where 
the tail asymptotics of the law of the maximum of the  BBM's after time $t/2$ are used.
This results in a formula that is already somewhat reminiscent of the Lalley-Sellke representation \cite{LS} of the
limiting distribution of the maximum of BBM. However, to prove convergence, we need to
exhibit more independence by splitting paths at time $t^\b$, for some suitable small $\b$. 
This results in an expression that in all cases involves a slight modification of the derivative martingale, 
that we then show to converge towards the limit of the  usual derivative martingale. 
In Section 5 we prove convergence of the the Laplace functionals and hence the extremal process. 
This is essentially identical to the proof of the law of the maximum and requires just a slight extension of the
results on the asymptotics of solutions of the F-KPP equation to the case of weakly $t$-dependent initial conditions.

\section {Prelimnaries about BBM}

In this section we collect some known results about standard branching Brownian motion. 
 A fundamental property of BBM is its relation to the Fisher-
Kolmogorov-Petrovsky-Piscounov  (F-KPP) equation \cite{fisher37,kpp} that was established by Ikeda, Nagasawa, and,Watanabe \cite{Ikeda1,Ikeda2,Ikeda3},  and McKean  \cite{McKean}. Namely, if we set, for some 
function $f:[0,1]\to [0,1]$,
\be\Eq(fkpp.1)
v(t,x)\equiv \E\left[\prod_{k=1}^{n(t)} f\left(x-x_k(t)\right)\right],
\ee
then $u(t,x)\equiv 1-\nu(t,x)$ is the solution of the F-KPP equation \eqv(fkpp.2)
with initial condition $u(0,x)=1-f(x)$, and
\be\Eq(fkpp.3)
F(u)=(1-u)-\sum_{k=1}^\infty p_k(1-u)^k.
\ee
The following proposition is based on the deep analysis of the behaviour of solutions to the F-KPP equation
presented in Bramson's monograph \cite{B_C}.
 
\begin{proposition}\TH(A.Prop1)
Let $u$ be a solution to the F-KPP equation with initial data satisfying 
\begin{itemize}
\item[(i)] $0\leq u(0,x)\leq 1$;
\item[(ii)] $\exists h>0$, $\limsup_{t\to\infty}\frac{1}{t}\log\int_t^{t(1+h)}u(0,y)\mbox{d}y\leq-\sqrt{2}$;
\item[(iii)] $\exists v>0,\,M>0$, and $N>0$, it holds that $\int_{x}^{x+N}u(0,y)\mbox{d}y>v$,  $\forall x\leq -M$;
\item[(iv)] moreover, $\int_0^\infty u(0,y)y\eee^{2y}\mbox{d}y<\infty$.
\end{itemize}
Then we have, for $0<x=x(t)$ such that $\lim_{t\uparrow\infty} x(t)/t=0$ 
\be\Eq(tail.1)
\lim_{t\to\infty}\eee^{\sqrt{2}x}\eee^{x^2/2t}x^{-1}u(t,x+\sqrt{2}t-\sfrac 3{2\sqrt 2}\ln t)=C,
\ee
where $C$ is a strictly positive constant that depends only on
 the initial condition $u(0,\cdot)$. 
 More precisely, 
 \be\Eq(mopre.1)
 C\equiv \lim_{r\uparrow\infty} \sqrt {\frac 2\pi} 
 \int_0^\infty u(r,y+\sqrt 2r) \eee^{\sqrt 2y}
y dy.
 \ee
\end{proposition}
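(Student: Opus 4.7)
The result is a refined tail asymptotic in the spirit of Bramson and Chauvin-Rouault; the plan is to combine Bramson's convergence-to-travelling-wave theorem with a Gaussian saddle-point expansion on a late time window $[r,t]$. Conditions (i)--(iv) are precisely Bramson's hypotheses for convergence to the travelling wave \cite{B_C}, giving $u(t, m(t)+\cdot) \to w$ uniformly on compact sets, where $w$ is the monotone F-KPP wave; the linear ODE satisfied by $w$ at infinity forces the tail $w(y) \sim \gamma\, y\, \eee^{-\sqrt{2}y}$ as $y\to\infty$ for some $\gamma > 0$ depending only on the initial data. This already establishes \eqv(tail.1) for $x(t)$ bounded; it remains to treat the regime $x(t) \to \infty$ with $x(t)/t \to 0$, where the Gaussian correction $\eee^{-x^2/(2t)}$ becomes visible.

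Fix a large intermediate time $r$ and propagate from $r$ to $t$ via the F-KPP semigroup. To the right of $m(t)$ the solution is small, so $F(u) = u - O(u^2) \approx F'(0)u = u$, and Bramson's sub-/super-solution bounds \cite{B_C} allow one to replace the evolution on $[r,t]$ by the linear heat equation $\partial_t u = \tfrac12 \partial_x^2 u + u$, yielding
\be
u(t, x + m(t)) = (1 + o(1))\, \eee^{t-r} \int_\R \frac{\eee^{-(x + m(t) - y)^2/(2(t-r))}}{\sqrt{2\pi(t-r)}}\, u(r, y)\, dy.
\ee
Substituting $y = \sqrt{2}\, r + \eta$ and setting $a = x - \eta - \tfrac{3}{2\sqrt 2}\log t$, one expands $(x+m(t)-y)^2 = (a+\sqrt{2}(t-r))^2$ into three pieces: the $2(t-r)^2$ piece whose exponential cancels the $\eee^{t-r}$ prefactor; the $2\sqrt{2}(t-r)\,a$ piece, whose exponential produces $\eee^{-\sqrt{2}\,x}\, \eee^{\sqrt{2}\,\eta}\, t^{3/2}$ (the $t^{3/2}$ coming from the $\tfrac{3}{2\sqrt 2}\log t$ in $a$); and the $a^2$ piece whose exponential divided by $2(t-r)$ equals $\eee^{-x^2/(2t)+o(1)}$ uniformly in the relevant range of $\eta$. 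A further Taylor expansion of the cross-term $\eee^{x\eta/(t-r)} = 1 + x\eta/t + O((x\eta/t)^2)$ produces an additional factor of $x$ and an extra $\eta$ in the integrand (this is the same mechanism responsible for the $y\,\eee^{-\sqrt{2}\,y}$ shape of the travelling wave at infinity), giving
\be
u(t, x + m(t)) = (1 + o(1))\, x\, \eee^{-\sqrt 2 x}\, \eee^{-x^2/(2t)}\, \sqrt{\tfrac{2}{\pi}} \int_0^\infty u(r, \sqrt 2 r + \eta)\, \eee^{\sqrt 2 \eta}\, \eta\, d\eta.
\ee

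Taking $t \to \infty$ at fixed $r$ and then $r \to \infty$ yields \eqv(tail.1) with $C$ as in \eqv(mopre.1). Existence, finiteness and positivity of the limit defining $C$ follow from the first paragraph: for $\eta$ in the Gaussian tail of $u(r,\cdot)$ the integrand behaves like $\gamma\,(\eta+\tfrac{3}{2\sqrt 2}\log r)\,\eta\, r^{-3/2}\eee^{-(\eta+\tfrac{3}{2\sqrt 2}\log r)^2/(2r)}$, and the rescaling $\eta = \sqrt r\, s$ shows the integral tends to a strictly positive constant independent of $r$. The main technical obstacle is the rigorous justification of the linearisation uniformly in $x = o(t)$: one must show that the nonlinear error decays faster than the linear prediction on the right tail, and that the region where $u(r,\cdot)$ is not small (arguments $\leq m(r)+O(1)$) contributes negligibly to the Gaussian integral because it sits far below the Gaussian saddle at $y \sim m(r)+\sqrt 2(t-r)$; both are handled by Bramson's delicate sub-/super-solution construction \cite{B_C}. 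A secondary nuisance is the residual cross-term of order $x\log t/t$ in the Gaussian exponent, which can fail to vanish at the edge of the allowed regime $x = o(t)$ but is absorbed into the definition of $C$ through the successive limits $t\to\infty$ and $r\to\infty$.
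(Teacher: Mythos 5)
The paper itself does not supply a detailed proof; it cites the proofs of the analogous statements in \cite{ABK_E} (for $x\sim\sqrt t$) and \cite{BovHar13} (for $x\sim t$), both of which go through Bramson's Feynman--Kac representation and the associated Brownian--bridge barrier estimate. Your proposal aims at the same statement but takes a shortcut in the crucial step, and that shortcut does not work.

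The gap is your claim that one may ``replace the evolution on $[r,t]$ by the linear heat equation'' and then recover the factor $x$ from the Taylor expansion $\eee^{x\eta/(t-r)}=1+x\eta/t+\dots$ of the Gaussian cross-term. If you actually carry out the unconstrained linear computation you wrote down, the prefactor is $t^{3/2}/\sqrt{2\pi(t-r)}\sim t$ and the zeroth-order term $1$ of the Taylor expansion contributes
\[
u(t,x+m(t))\;\approx\; t\,\eee^{-\sqrt 2 x}\,\eee^{-x^2/2t}\,\frac{1}{\sqrt{2\pi}}\int_{\R} u(r,\sqrt 2 r+\eta)\,\eee^{\sqrt 2\eta}\,d\eta ,
\]
with a strictly positive integral by your conditions (i)--(iv). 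This term scales like $t$, not like $x$, and it dominates the $x\eta/t$ correction whenever $x=o(t)$. So the linear heat equation does not reproduce \eqv(tail.1); it overshoots by a factor of order $t/x$. The extra factor of $x$ (and the companion factor of $\eta$ in the integral) cannot come from a Taylor expansion of the Gaussian kernel; it comes from the Brownian-bridge-staying-below-a-barrier probability $\sim 2x\eta/(t-r)$, which is exactly what Bramson's $\psi$-functions encode. Bramson's sub- and super-solutions are not solutions of the free linear heat equation but of the linear equation with a moving absorbing barrier; it is the absorption that kills the divergent ``1'' contribution and produces the $x\eta$ weight. Your first paragraph correctly recalls the travelling-wave tail $w(y)\sim\gamma\,y\,\eee^{-\sqrt 2 y}$, but the linear factor $y$ there has the same origin, so the same mechanism must appear explicitly in the propagation step.

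To repair the argument you would need to replace the free Gaussian kernel in your integral representation by the kernel of Brownian motion killed on crossing Bramson's curve $s\mapsto M_{r,t}(s)$ (equivalently, insert the bridge probability $\P(\text{bridge from }x+m(t)\text{ to }y\text{ stays below the barrier})$), and then verify that the dominant contribution comes from $y=\sqrt 2 r+\eta$ with $\eta=O(1)$, after which the $t\to\infty$, $r\to\infty$ limits and the dominated-convergence justification can proceed essentially as you outline. This is precisely what the cited proofs in \cite{ABK_E} and \cite{BovHar13} carry out for the two ranges of $x$, and what needs to be interpolated to cover all $x=o(t)$.
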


\begin{proof} The proof of this proposition is a direct adaption of the proofs of the corresponding propositions in \cite{ABK_E} and \cite {BovHar13}
for the cases $x\sim \sqrt t$ and $x\sim t$.
\end{proof}

\begin{remark} Choosing for $f$ the Heaviside function, this proposition implies in particular that, for $x>0$, 
\be\Eq(probab.1)
\lim_{t\to\infty}\eee^{\sqrt{2}x}\eee^{x^2/2t}x^{-1}\P\left(\max_{k\leq n(t)}x_k(t)>
 x+\sqrt{2}t -\frac 3{2\sqrt 2}\ln t\right)=C.
\ee

\end{remark}

The following rougher bound that follows by using the many-to-one lemma and standard Gaussian asymptotics,

\begin{lemma}
\TH(probab.2)
For any $x\in \R_+$,
\be
\Eq(probab.3)
\P\left(\max_{k\leq n(t)}x_k(t)>
 x+\sqrt{2}t\right)\leq \frac {\eee^{-\sqrt 2x-\frac{x^2}{2t}}}{\sqrt {2\pi}( \sqrt {2t} +x/\sqrt t)}.
 \ee
\end{lemma}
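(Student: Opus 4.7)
The plan is to combine the many-to-one lemma with the standard Gaussian (Mills ratio) tail bound, which together give essentially a one-line calculation.

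First I would apply the union / first-moment bound (``many-to-one''): since a single BBM particle at time $t$ is distributed as a standard Brownian motion $B_t$, and $\E[n(t)]=\eee^t$, we have
\be
\P\left(\max_{k\leq n(t)}x_k(t)>a\right)\leq \E\bigl[\#\{k\leq n(t):x_k(t)>a\}\bigr]=\eee^t\,\P(B_t>a).
\ee

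Next I would invoke the Mills ratio bound for a standard normal, namely $\P(\mathcal N(0,1)>z)\leq (z\sqrt{2\pi})^{-1}\eee^{-z^2/2}$ for $z>0$, rescaled to $B_t$: for $a>0$,
\be
\P(B_t>a)\leq \frac{\sqrt t}{\sqrt{2\pi}\,a}\,\eee^{-a^2/(2t)}.
\ee
Specializing to $a=x+\sqrt 2 t$ and expanding $(x+\sqrt 2 t)^2/(2t)=t+\sqrt 2 x+x^2/(2t)$, the factor $\eee^t$ from the many-to-one step is exactly cancelled, leaving
\be
\P\left(\max_{k\leq n(t)}x_k(t)>x+\sqrt 2 t\right)\leq \frac{\sqrt t}{\sqrt{2\pi}\,(x+\sqrt 2 t)}\,\eee^{-\sqrt 2 x-x^2/(2t)}.
\ee
Finally I rewrite $\sqrt t/(x+\sqrt 2 t)=1/(\sqrt{2t}+x/\sqrt t)$ to obtain the stated form. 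There is no real obstacle here: the proof is purely a first-moment estimate, and the main point is merely that the exponential growth of the expected particle number is exactly balanced by the Gaussian decay at the critical velocity $\sqrt 2$, with the polynomial prefactor coming from the Mills ratio.
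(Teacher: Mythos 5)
Your proof is correct and follows exactly the approach the paper indicates (the paper does not write out the calculation but states the lemma "follows by using the many-to-one lemma and standard Gaussian asymptotics"). The first-moment bound $\eee^t\P(B_t>a)$, the Mills ratio $\P(B_t>a)\leq \sqrt t\,(a\sqrt{2\pi})^{-1}\eee^{-a^2/2t}$, and the expansion of $(x+\sqrt 2 t)^2/2t$ all check out, and the algebraic rewrite $\sqrt t/(x+\sqrt 2 t)=1/(\sqrt{2t}+x/\sqrt t)$ gives the stated form.
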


\def\CCC{\wt C}
\section{The law of the maximum: the case $\s_1^2=1+t^{-\a}$}

The aim of this section is to prove Theorem \thv(main.1) in the case when $\s_1^2=1+t^{-\a}$, i.e.
to show 
 that
\be\Eq(M.Th2)
\lim_{t\to\infty}\P\left[\max_{1\leq k \leq n(t)}\tilde x_k(t)-m_\a^+(t)\leq y\right]= \E\left[\exp\left(-\frac {\CCC Z}{\sqrt{2\pi}}
\eee^{-\sqrt{2}y}\right)\right],
\ee
where we set $\CCC\equiv 2^{3/2} C$. In this section we will always write $m(t)\equiv m^+_\a(t)$.

\subsection{Localisation of paths}

To prove \eqv(M.Th2), we need to control the position of particles until time $t/2$. To this end, 
we  define three sets on the space of paths, $X:\R_+\rightarrow \R$. The first  controls the position at  time $s$. The second ensures that the path of the particle does not exceed a certain value, and the third controls the positions of particles at time $t^\b$.
\bea\Eq(D.11)
\GG_{s,A,B,\g}&=&\bigl\{X\big \vert X(s)-\sqrt{2}s\in [-A  s^\g,-Bs^\g]\bigr\},\nonumber\\
\TT_{s_1,s_2}&=&\bigl\{X \big \vert \forall_{s_1\leq q\leq s_2} X(q)\leq \sqrt 2q \bigr\},\nonumber\\
\HH_\d &=& \bigl\{X \big \vert  X(t^\b)\leq \sqrt 2t^\b-t^{\b\d}\bigr\}.
\eea

 In the case of standard BBM, it was shown in Bramson \cite{B_M} (see also
the detailed analysis in 
 \cite {ABK_G})
 that the positions of particles that are near the maximum at time $t$ are at time $t/2$ in
a window of order $\sqrt {t}$ below $\sqrt 2 t/2$. In the case of 2-speed BBM with 
$\s_1<\s_2$, it was shown in \cite{BovHar13} that  the corresponding window is
 of width $\sqrt t$ around $\sqrt 2\s_1^2t/2$, which is a linear order in  $t$ below the 
level of the maximal particles at time $t/2$ (which is near $\sqrt 2 \s_1 t/2$). 
If $\s_1>1$, then extremal particles descend from the actual extremal particles at time $t/2$. So we expect that 
in our case, we see a transition from $\sqrt t$ to "zero" as we vary $\a$. 

\begin{proposition}\TH(P.Prop0) Let $\s_1^2=1+t^{-\a}, \s_2^2=1-t^{-\a}$. For 
any  $\e>0$, there is $r_0<\infty$, such that for all $r>r_0$, for all $t$ large enough,
\be\Eq(p.0)
\P\left[\exists_{j\leq n(t)}:   \s_1^{-1}\tilde x_j\in \TT_{r,t/2}\right]\leq \e.
\ee
\end{proposition}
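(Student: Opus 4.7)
My plan is to bound~\eqv(p.0) by a first-moment argument combined with a Girsanov tilt, reducing the question to a ballot-type estimate for Brownian motion. The basic reduction is that, after the time change $q\mapsto\sigma_1^2 q$ and the spatial rescaling by $\sigma_1^{-1}$, the process $\sigma_1^{-1}\tilde x(q)$ on $q\in[0,t/2]$ is a standard BBM in law; consequently the event in~\eqv(p.0) reduces to a path event for standard BBM relative to the deterministic line $\sqrt 2\,q$, and the parameter $\sigma_1$ disappears from the single-particle estimate.

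First, I would bound the $\exists$-probability by the expected number of ancestors at time $t/2$ whose path realises the event on $[r,t/2]$ (Markov's inequality applied to the $\N$-valued count, using that every particle alive at time $t/2$ has descendants at time $t$). By the many-to-one lemma this expectation equals $e^{t/2}$ times the probability that a single standard Brownian motion $W$ satisfies the corresponding path condition. I would then apply a Girsanov change of measure via the exponential martingale $e^{\sqrt 2\,W(q)-q}$: under the tilted measure $Q$ the process $\tilde W(q):=W(q)-\sqrt 2\,q$ is a standard Brownian motion, and the Radon-Nikodym factor $e^{-\sqrt 2\,W(t/2)+t/2}$ combines with the $e^{t/2}$ from many-to-one to cancel the exponential growth exactly, leaving the schematic identity
\[
 e^{t/2}\,\P\bigl[W\text{ realizes the event on }[r,t/2]\bigr]
 \;=\; \E^{Q}\!\left[\mathbbm{1}_{E'}\,e^{-\sqrt 2\,\tilde W(t/2)}\right],
\]
where $E'$ is the translated barrier event for $\tilde W$ relative to the origin.

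I would then estimate the remaining $Q$-expectation by classical ballot/Bessel-bridge techniques: conditioning on $\tilde W(r)$, which is $N(0,r)$-distributed, and using the reflection principle to write the conditional density of $\tilde W(t/2)$ under the barrier constraint on $[r,t/2]$, I integrate the damping $e^{-\sqrt 2\,\tilde W(t/2)}$ against this density. Completing the square reduces the bound to a single Gaussian integral of order $O(r^{-1/2})$ uniformly in $t$. Choosing $r_0$ large then makes the bound smaller than any prescribed $\e>0$, so that~\eqv(p.0) follows.

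The main obstacle will be arranging the Girsanov tilt so that the growth $e^{t/2}$ is absorbed \emph{exactly} and the remaining integral decays with $r$ rather than with $t$. Related technical points are to treat the unconstrained initial segment $[0,r]$ separately, in order to obtain the correct Gaussian distribution for $\tilde W(r)$, and to use a Bessel-type survival estimate on $[r,t/2]$ compatible with the exponential weight $e^{-\sqrt 2\,\tilde W(t/2)}$ at the endpoint; without this careful pairing the naive moment estimates blow up as $t\to\infty$.
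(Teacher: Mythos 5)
Your reduction to standard BBM is correct in spirit, although no time change is needed: $\s_1^{-1}\tilde x_j$ on $[0,t/2]$ is already a standard BBM, since multiplying a Brownian motion by a constant rescales its variance and the branching rate is $1$ in the original clock. The paper at this point simply cites Bramson \cite{B_M} and \cite{ABK_G}; you attempt to supply a proof, but the first-moment strategy you choose cannot establish the statement.

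Counting particles at time $t/2$ whose ancestral path ever exceeds $\sqrt 2 q$ over-counts by the entire branching factor: a lineage that first crosses at time $q$ has of order $\eee^{t/2-q}$ descendants at time $t/2$, each of which carries that crossing. After the many-to-one step your bound is $\eee^{t/2}\,\P\bigl[\exists\, q\in[r,t/2]:W(q)>\sqrt 2 q\bigr]$, and a routine exponential-martingale calculation shows $\P\bigl[\exists\, q\geq r:W(q)>\sqrt 2 q\bigr]$ is of order $\eee^{-r}/\sqrt r$, so the first moment is of order $\eee^{t/2-r}/\sqrt r$ and \emph{diverges} as $t\uparrow\infty$ for every fixed $r$. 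The Girsanov tilt is an algebraic rewriting of the same quantity as $\E^{Q}\bigl[\1_{E'}\,\eee^{-\sqrt 2\tilde W(t/2)}\bigr]$ and cannot remove this divergence: $\E^{Q}\bigl[\eee^{-\sqrt 2\tilde W(t/2)}\bigr]=\eee^{t/2}$, and restricting to $E'$ does not cure it (the sub-event $\{\tilde W(r)>0\}\subset E'$ alone already contributes a term of order $\eee^{t/2-r}/\sqrt r$), so the $O(r^{-1/2})$ bound you claim uniformly in $t$ is false. Nor does it help to count distinct \emph{first} crossings by a stopping line at $\tau_r=\inf\{q\geq r:W(q)\geq\sqrt 2q\}$: the optional-stopping many-to-one identity gives $\E\bigl[\eee^{\tau_r}\1_{\tau_r\leq t/2}\bigr]$ of order one (it tends to $1/2$ as $r\uparrow\infty$), so no first-moment argument can push the bound below a universal constant. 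The statement genuinely needs a different input: for instance, the critical additive martingale $\sum_j\eee^{\sqrt 2(x_j(q)-\sqrt 2q)}$ tends to $0$ almost surely (Lalley and Sellke \cite{LS}), so $\max_j x_j(q)-\sqrt 2q\to-\infty$ a.s.; the event in \eqv(p.0) (read as a barrier crossing) is then $\{R>r\}$ for the a.s.\ finite random time $R$ after which all particles stay below the line $\sqrt 2 q$, and $\P[R>r]\to 0$. This almost-sure, rather than first-moment, fact is what the paper's citation to \cite{B_M,ABK_G} is invoking.
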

\begin{proof}
The event considered depends only on standard BBM up to time $t/2$. The well know estimate for standard BBM
 follows from 
 Bramson's results in \cite{B_M}, see also \cite{ABK_G}. 
\end{proof}

The next proposition states that extremal particles stay by $t^\a$ below $\sqrt 2 t/2$ at
 time $t/2$ when the speed change happens. 

\begin{proposition}\TH(P.Prop1) Let $\s_1^2=1+t^{-\a}, \s_2^2=1-t^{-\a}$. For any $d\in \R$ and 
any  $\e>0$,  there exists a constant $A,B>0$ such that, for all $t$ large enough,
\be\Eq(p.1)
\P\left[\exists_{j\leq n(t)}: \{\tilde x_j(t)>m(t)-d \}\land \{  \s_1^{-1}\tilde x_j\in \GG_{t/2,A,B,\a}\}\right]\leq \e.
\ee
\end{proposition}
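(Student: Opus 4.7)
The plan is to bound the expected number of extremal particles whose ancestor at time $t/2$ lies outside the predicted localisation window, then apply Markov's inequality. The two key inputs are the sharp tail asymptotic for the maximum of standard BBM from Proposition \thv(A.Prop1), applied to the independent BBM of duration $t/2$ that each first-half particle starts, together with a Girsanov--reflection (ballot) estimate for the first-half paths, where the absorbing barrier from Proposition \thv(P.Prop0) may be enforced at cost at most $\e$.

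By the branching property at time $t/2$ and a union bound, the probability in question is dominated by
\[
\E\Bigl[\sum_{j\le n(t/2)} \1\{\s_1^{-1}\tilde x_j\text{ is bad}\}\,\P\bigl[\max_{k\le n(t/2)} x_k(t/2) > \tfrac{m(t)-d-\tilde x_j(t/2)}{\s_2}\,\big|\,\tilde x_j(t/2)\bigr]\Bigr].
\]
For the inner probability I apply Proposition \thv(A.Prop1): parametrising the ancestor as $\tilde x_j(t/2)=\sqrt 2\,\s_1 t/2+\nu$, the excess of the target over the standard BBM median at time $t/2$ is $w(\nu)=-\nu/\s_2-\frac{3(1-2\a)}{2\sqrt 2}\ln t+O(1)$, so for $w(\nu)\ge 1$ the bound takes the sharp form $C\,w(\nu)\,e^{-\sqrt 2\,w(\nu)-w(\nu)^2/t}$. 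For the outer sum, many-to-one combined with a Girsanov shift and the reflection principle gives the barrier-surviving density as proportional to $|\nu|\,t^{-3/2}\,e^{-\sqrt 2\,\nu/\s_1-\nu^2/(\s_1^2 t)}\,d\nu$ on $\nu\le 0$; the ballot prefactor $|\nu|/t$ is the crucial gain over a plain Gaussian first-moment bound.

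Expanding with $\s_1\approx 1+\tfrac12 t^{-\a}$ and $\s_2\approx 1-\tfrac12 t^{-\a}$, the linear-in-$\nu$ cross term $\sqrt 2\,\nu(1/\s_2-1/\s_1)$ collapses to $\sqrt 2\,t^{-\a}\nu$, the quadratic terms combine to $-2\nu^2/t$, and the logarithmic contributions $\sqrt 2 L/\s_2-\frac{3}{2}\ln(t/2)$ (with $L=\frac{3(1-\a)}{\sqrt 2}\ln t+d$) yield $\tfrac{3(1-2\a)}{2}\ln t$; together with the ballot and BBM-tail prefactors, the net $t$-dependent polynomial factor is exactly $t^{-3\a}$. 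Changing variables $\nu=-s\,t^\a$ turns $|\nu|\,w(\nu)\sim s^2 t^{2\a}$ and $d\nu=t^\a\,ds$, while the quadratic exponent $-2s^2 t^{2\a-1}$ is $o(1)$ for $\a<1/2$, so the integrand reduces to $C'\,s^2\,e^{-\sqrt 2\,s}\,ds$, independent of $t$.

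It now suffices to integrate this over the two unwanted regions. For $s\ge A$ (ancestor too far below $\sqrt 2\,\s_1 t/2$), $\int_A^\infty s^2 e^{-\sqrt 2\,s}\,ds\to 0$ as $A\to\infty$; for $0\le s\le B$ (ancestor too close to the effective barrier), $\int_0^B s^2 e^{-\sqrt 2\,s}\,ds=O(B^3)\to 0$ as $B\to 0$. Both contributions can be made smaller than $\e/2$ by appropriate choice of $A$ large and $B$ small. The main obstacle will be the precise bookkeeping of logarithmic and polynomial prefactors so that the exponent of $t$ cancels exactly to zero---any $t^{\delta}$ slippage would spoil the argument; a secondary technicality is the narrow region where $w(\nu)<1$, of $\nu$-width $O(\ln t)$, where the sharp tail from Proposition \thv(A.Prop1) does not apply and one must bound the probability trivially by $1$, but this contribution is easily shown to be $o(1)$.
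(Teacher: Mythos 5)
Your proposal follows essentially the same route as the paper: insert the barrier $\TT_{r,t/2}$ at cost $\e$ (Proposition \thv(P.Prop0)), use many-to-one and the ballot estimate on the first half, apply the sharp F-KPP tail of Proposition \thv(A.Prop1) to the second-half BBM, and change variables $\nu=-st^\a$ so the $t$-dependent prefactors cancel, leaving $\int s^2 e^{-\sqrt2 s}\,ds$ over the two bad ranges $s\le B$ and $s\ge A$. Your exponent bookkeeping ($t^{-3\a}$ from density and tail vs. $t^{3\a}$ from $|\nu|\,w(\nu)\,d\nu$) is correct, and you correctly flag the narrow band where $w(\nu)<1$.

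One place you are loose, and which the paper treats explicitly: both the ballot prefactor $\sqrt{2/\pi}\,\sqrt r\,|\nu|/ (t/2)$ and the sharp tail asymptotic of Proposition \thv(A.Prop1) are only valid (uniformly) for $|\nu|\ll\sqrt t$, while the integration over "$s\ge A$" runs out to $|\nu|$ of order $t$. You dismiss the Gaussian exponent $-2s^2 t^{2\a-1}$ as $o(1)$, which is true only for $s$ in a bounded range; for $s\gtrsim t^{1/2-\a}$ it is not negligible, and more importantly the asymptotic from Proposition \thv(A.Prop1) ceases to be an upper bound there. The paper handles this by splitting at $z\sim K\sqrt t$ and using the crude many-to-one bound of Lemma \thv(probab.2) on the far piece (see the step leading to \eqv(p.10)), which decays superpolynomially. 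Your argument needs this same extra step; without it, the appeal to a sharp asymptotic over the full unbounded range is not justified. This is a repairable omission rather than a structural flaw. A minor wording point: only \emph{positive} $t^{\delta}$ slippage would hurt; negative slippage only helps.
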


\begin{proof} Abbreviate $I\equiv [\sqrt 2t/ 2-At^\a,\sqrt 2t/ 2-Bt^\a]$.
The probability in question can be written in the form
\be\Eq(p.2)
\P\left(\exists_{k\leq n(t/2)}  : \{\s_1x_k(t/2)>  m(t)-\s_2\max_{\ell\leq n^k(t/2)}x_\ell^k(t/2)-d\} \land 
\{x_k(t/2) \not \in  I\}\right),
\ee
where we denote by $x^k$, $k\in \N$  iid copies of standard BBMs.
We can also insert the condition $\TT_{r,t/2}$ at no cost by Proposition \thv(P.Prop0).
Then the 
expression in \eqv(p.2) becomes
\be\Eq(p.2.1)
\P\Bigl(\exists_{k\leq n(t/2)}  : \{\s_1\max_{\ell\leq n^k(t/2)}x_\ell^k(t/2) >  m(t)- \s_2x_k(t/2)-d \}\land
\{x_k(t/2)\not \in I\}
\land \{ x_k(s)\leq \sqrt 2s, \forall_{s\in [r,t/2]}\}
\Bigr).
\ee
By the many-to-one lemma, 
this is bounded from above by
\bea\Eq(p.3)
    &&\eee^{t/2}\E\left[ \1_{x_1(t/2)\not\in I}
    \1_{x_k(s)\leq \sqrt 2s, \forall s\in [r,t/2]}
    \1_{ \max_{\ell\leq n^k(t/2)} \s_2x_\ell^k(t/2) >  m(t)- \s_1x_k(t/2)-d }\right]\\\nonumber
    &&=\eee^{t/2}\int_{I^c} \frac{  \eee^{-\frac{z^2}{t}}}{\sqrt {\pi t}}
    \P\left(\zet^{t/2}_{0,\sqrt 2 t/2-z}(s)\leq 0, \forall_{s\in (r,t/2]}\right)
    \P\left( \s_2\max_{\ell\leq n^k(t/2)}x_\ell^k(t/2) >  m(t)- \s_1z-d \right)dz,
    \eea
where $\zet^{t/2}_{0, y}$ denotes the Brownian bridge from $0$ to $y$ in time $t/2$
and we wrote $I^c$ short for $ I^c\cap (-\infty,\sqrt 2t/2]$.
The probability regarding the Brownian bridge satisfies 
\be\Eq(p.4)
    \P\left(\zet^{t/2}_{0,\sqrt 2 t/2-z}(s)\leq 0, \forall_{s\in (r,t/2]}\right)
    \leq   \sqrt{\frac 2\pi} \frac {(\sqrt 2 t/2-z)\sqrt r} {t/2},
    \ee
    as long as $\sqrt 2 t/2-z\ll \sqrt t$, and is bounded by $1$ otherwise.
    We now split the integral into the parts where $z$ is above $\sqrt 2 t/2-Bt^\a$ and  where it
    is below $\sqrt 2 t/2-At^\a$.
    The first part gives, with a change of variables, 
 \bea\Eq(p.5)
 \nonumber
          &&
    \eee^{t/2}\int_0^{Bt^\a} \frac{  \eee^{-\frac{(y-\sqrt2 t/2)^2}{t}}}{\sqrt {\pi t}}
   \P\left(\zet^{t/2}_{0,-y}(s)\leq 0, \forall_{s\in (r,t/2]}\right)
   \\
    &&\quad\quad\times\;
    \P\left(  \s_2\max_{\ell\leq n^k(t/2)}x_\ell^k(t/2) >  m(t)-\s_1\sqrt 2t/2+ \s_1 y-d \right)dy.
    \eea
    Recall that 
    $m(t)- \s_1\sqrt 2t/2 = \s_2\sqrt 2 t/2 -\frac {3}{2\sqrt 2} (\s_1+\s_2(1-2\a))\ln t$. 
    Hence, the probability involving the maximum in \eqv(p.5) reads
    \be\Eq(p.6)
    \P\left( \max_{\ell\leq n^k(t/2)}x_\ell^k(t/2) >  \sqrt 2t/2-\frac 3{2\sqrt 2}\left( \frac{\s_1}{\s_2} 
    + (1-2\a) \right)\ln t +\frac{\s_1}{\s_2}y-d/\s_2 \right).
    \ee
    Using Proposition  \thv(A.Prop1), respectively \eqv(probab.1),
    we see that this probability equals, asymptotically as $t\uparrow\infty$, for $y> \frac 3{2\sqrt 2}(1-2\a) \ln t+d/\s_2$,
    \bea\Eq(p.6)
   && C\left(\sfrac {\s_1}{\s_2}y-\sfrac 3{2\sqrt 2} (1-2\a) \ln t -d/\s_2\right)
    \eee^{-\sqrt 2\left(-\frac 3{2\sqrt 2}  (1-2\a) \ln t +\sfrac {\s_1}{\s_2}y-d/\s_2\right)}\nonumber\\
    &&=t^{3/2} C\left(\sfrac {\s_1}{\s_2}y-\sfrac 3{2\sqrt 2} (1-2\a) \ln t -d/\s_2\right)
    \eee^{-\sqrt 2\left(\sfrac {3\a}{\sqrt 2}
     \ln t +\sfrac {\s_1}{\s_2}y-d/\s_2\right)}\nonumber
    \\ &&=
    C\left(\sfrac {\s_1}{\s_2}y-\sfrac 3{2\sqrt 2}\left((1-2\a) \right)\ln t-d/\s_2\right)
    t^{3/2-3\a } \eee^{-\sqrt 2\left(\sfrac {\s_1}{\s_2}y-d/\s_2\right)}.
       \eea
       For $y\leq \frac 3{2\sqrt 2}(1-2\a) \ln t+d/\s_2$, we simply bound the probability by $1$.
       Inserting this and the bound \eqv(p.4) into \eqv(p.5), we see that this term is not larger than 
       \bea\Eq(p.7)
     &&\int_{\sfrac 3{2\sqrt 2}(1-2\a) \ln t+d/\s_2}^{Bt^\a}
  \frac{  \eee^{y\sqrt 2  }}{\sqrt {\pi t}}
   \frac {y\sqrt r} {\sqrt {2\pi}t}
     C\left(\sfrac {\s_1}{\s_2}y-\sfrac 3{2\sqrt 2} (1-2\a) \ln t -d/\s_2\right)
    t^{3/2 -3\a}  \eee^{-\sqrt 2\left( \sfrac{ \s_1}{\s_2}y-d/\s_2\right)}
 dy\nonumber\\
 &&
 =\int_{t^{-\a} \left(\sfrac 3{2\sqrt 2}(1-2\a) \ln t+d/\s_2 \right)}^{B} 
  \frac{  \eee^{-y \sqrt2  }}{\sqrt {\pi }}
   \frac {yt^{\a}\sqrt r} {\sqrt {2\pi}}
         C\left(\sfrac {\s_1}{\s_2}yt^\a-\sfrac 3{2\sqrt 2} (1-2\a) \ln t -d/\s_2\right)
    t^{-2\a} \eee^{\sqrt 2d/\s_2} 
 dy\nonumber\\
&&\sim \int_0^{B} 
  \frac{  \eee^{-y \sqrt2  }}{\sqrt {\pi }}
   \frac {\sqrt r} {\sqrt {2\pi}}
    C y^2  \eee^{\sqrt 2d} 
dy,
    \eea
which is finite and tends to zero, as $B\downarrow 0$. Finally, for the remaining part of the integral in \eqv(p.5) we bound by
   \be\Eq(p.7.1)
     \int_0^{\sfrac 3{2\sqrt 2}(1-2\a) \ln t+d/\s_2}
  \frac{  \eee^{y\sqrt 2  }}{\sqrt {\pi t}}
   \frac {y\sqrt r} {\sqrt {2\pi}t} dy
   \leq C(\ln t)^2 r t^{-3\a},
\ee
which  tends to zero, as $t\uparrow\infty$.
 The  part of the   integral  in \eqv(p.3)  involving the terms below $\sqrt 2t/2-At^\a$ can be written as
 \bea\Eq(p.8)
          &&\eee^{t/2}\int_{At^\a}^\infty \frac{  \eee^{-\frac{(y-\sqrt2 t/2)^2}{t}}}{\sqrt {\pi t}}
   \P\left(\zet^{t/2}_{0,-y}(s)\leq 0, \forall_{s\in (r,t/2]}\right)\nonumber\\
&&\quad\quad\times    \P\left(  \s_2\max_{\ell\leq n^k(t/2)}x_\ell^k(t/2) >  m(t)-\s_1\sqrt 2t/2+ \s_1 y-d \right)dy.
    \eea
We have to distinguish the cases where $z\leq K \sqrt t$ and the rest. In the former, we can proceed as in 
the case above and we get, up to vanishing terms, for any $K>0$, 
\be\Eq(p.9)
 \int_A^{K t^{1/2-\a} }
  \frac{  \eee^{-y \sqrt2  }}{\sqrt {\pi }}
   \frac {\sqrt r} {\sqrt {2\pi}}
    C y^2  \eee^{\sqrt 2d/\s_2}dy \rightarrow  \int_A^\infty
  \frac{  \eee^{-y \sqrt2  }}{\sqrt {\pi }}
   \frac {\sqrt r} {\sqrt {2\pi}}
    C y^2 \eee^{\sqrt 2d/\s_2}
dy,  \;\;      \text{as} \,t\uparrow\infty,
\ee
which in turn  converges to zero as $A\uparrow \infty$.
For the remaining term, it is enough to bound the probability involving the Brownian bridge by one and to use the bound \eqv(probab.3).
One then gets a bound 
\be\Eq(p.10)
  \int_{L\sqrt t}^\infty\eee^{-y\sqrt2t^{-\a} } t^{1/2+3(1-2\a)/2}dy
\leq  \int_K^\infty \eee^{-\sqrt2 y t^{1/2-\a}} t^{5/2-3\a}dy,
\ee
which tends to zero rapidly, as $t\uparrow \infty$. 
This concludes the proof.
\end{proof}

The next proposition states that  $\HH_\d$ holds for all extremal particles,
for $0<\d<1/2$. This is a weaker form of the localisation results shown in \cite{ABK_G}.

\begin{proposition}\TH(P.Prop2) Let $\s_1^2=1+t^{-\a}, \s_2^2=1-t^{-\a}$. For any $d\in \R$ and 
any  $\e>0$,  there exists $ 0<\d<1/2$  such that, for all $t$ large enough,
\be\Eq(pruz.3)
\P\left[\exists_{j\leq n(t)}: \{\tilde x_j(t)>m(t)-d\} \land  \{ \s_1^{-1}\tilde x_j\in \HH_{\d}\}\right]\leq \e.
\ee
\end{proposition}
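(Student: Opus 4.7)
The plan is to mimic the structure of the proof of Proposition~\thv(P.Prop1), combining a many-to-one estimate at time $t/2$ with the sharp F-KPP tail asymptotics of Proposition~\thv(A.Prop1), but now decomposing the ancestor path at $t^\b$ as well and exploiting two successive Brownian-bridge barrier estimates on $(r,t^\b]$ and $[t^\b,t/2]$.

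First I would add, at a total cost of $2\e/3$ in probability, the conclusions of Propositions~\thv(P.Prop0) and~\thv(P.Prop1) to the event under consideration: the rescaled path $Y_j(s)=\s_1^{-1}\tilde x_j(s)$ stays below $\sqrt 2 s$ on $[r,t/2]$, and its deficit at $t/2$, namely $v_2:=\sqrt 2 t/2-Y_j(t/2)$, lies in $[Bt^\a, At^\a]$. Write also $v_1:=\sqrt 2 t^\b-Y_j(t^\b)\ge 0$; the event to be controlled is then that some extremal particle $j$ has $v_1$ within $t^{\b\d}$ of the barrier at $t^\b$. Applying many-to-one at time $t/2$ and using Proposition~\thv(A.Prop1) for the post-$t/2$ standard BBM maximum $M$, which yields $\P[M>u]\sim C(\s_1/\s_2)\,v_2\,t^{3(1-2\a)/2}\,\exp(-\sqrt 2(\s_1/\s_2)v_2)$, the probability in question is bounded by a double integral over $(v_1,v_2)$ in the relevant ranges, weighted by the joint Gaussian density of $(Y_j(t^\b),Y_j(t/2))$, this tail, and the joint barrier probability.

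For the barrier probability, the Markov property at $t^\b$ and reflection give
\[
\P\bigl[Y_j(s)\le\sqrt 2 s\text{ on }(r,t/2]\bigm|Y_j(t^\b),Y_j(t/2)\bigr]\;\approx\;\frac{\sqrt{2/\pi}\,v_1\sqrt r}{t^\b}\cdot\frac{2v_1v_2}{t/2-t^\b},
\]
yielding the crucial $v_1^2$ factor. Inserting this and the Gaussian densities, the $\eee^{t/2}$ from many-to-one cancels against the Gaussian normalisations. The substitution $v_2=yt^\a$ together with $\s_1/\s_2-1\sim t^{-\a}$ reduces the $v_2$-integral to a bounded constant $\int_B^A y^2 \eee^{-\sqrt 2 y}\,dy$, while the $v_1$-integral reduces to $\int v_1^2\,dv_1\sim t^{3\b\d}/3$ on the small-$v_1$ window (the Gaussian factor $\eee^{-v_1^2/(2t^\b)}$ is $\approx 1$ since $t^{\b\d}\ll\sqrt{t^\b}$ when $\d<1/2$). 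Carefully tracking powers of $t$ — the $t^{3(1-2\a)/2}$ from Proposition~\thv(A.Prop1) exactly cancels the $t^{3/2-2\a}$ coming from $v_2^2\,dv_2$ — one arrives at a bound of order $\sqrt r\cdot t^{3\b(\d-1/2)}$, which is $\so$ for any $\d<1/2$.

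The main obstacle is that both bridge factors are needed: dropping either the $(\sqrt{2/\pi}\,v_1\sqrt r/t^\b)$ on $(r,t^\b]$ or the $(2v_1v_2/(t/2-t^\b))$ on $[t^\b,t/2]$ loses a power of $v_1$ and the estimate collapses. A secondary technical point is that the cross term $(v_2-v_1)^2/(2(t/2-t^\b))$ in the Gaussian exponent and the quadratic correction $(u-M^\ast)^2/t$ from Proposition~\thv(A.Prop1) are both $\so$ in the regime $\a<1/2$ with $\b$ small; this uses $v_2\le At^\a\ll\sqrt t$ and $v_1\le t^{\b\d}\ll\sqrt{t^\b}$.
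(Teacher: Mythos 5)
Your proposal is correct and rests on exactly the same two pillars as the paper's proof: decomposing the ancestral path at $t^\b$, and using Brownian‐bridge barrier estimates on both sub‐intervals $(r,t^\b]$ and $[t^\b,t/2]$ to extract the crucial $v_1^2$ factor, from which the $v_1$-window $[0,t^{\b\d}]$ yields the decay $t^{3\b(\d-1/2)}\to 0$ for $\d<1/2$.

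The one organizational difference worth noting: the paper does not rerun the full many-to-one computation. Having already established (via Propositions~\thv(P.Prop0) and~\thv(P.Prop1)) that extremal particles are localized at $t/2$ and stay below the barrier, it simply computes the \emph{conditional} probability that a barrier-constrained bridge ending near $\sqrt 2 t/2-z$ with $z\in[Bt^\a,At^\a]$ is within $t^{\b\d}$ of the line at $t^\b$, as a ratio of two $v_1$-integrals. In that ratio the $z$-dependence, the F-KPP tail factor, and all the $t$-powers from many-to-one cancel automatically, and one is left with $\int_0^{t^{\b(\d-1/2)}}\!y^2\eee^{-y^2/2}dy \big/ \int_0^\infty y^2\eee^{-y^2/2}dy\leq t^{3\b(\d-1/2)}$. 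Your version carries all those powers of $t$ through explicitly and verifies the cancellations by hand; this is more work but equally valid, and arguably more explicit about why everything fits. One small arithmetic slip: $v_2^2\,dv_2$ under the substitution $v_2=yt^\a$ contributes $t^{3\a}$, not $t^{3/2-2\a}$, so what cancels the F-KPP factor $t^{3/2-3\a}$ to leave $t^{3/2}$ (which is then absorbed by the Gaussian normalizations and the second bridge denominator) is $t^{3\a}$; your stated final power $t^{3\b(\d-1/2)}$ is nonetheless correct, so this does not affect the argument.
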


\begin{proof} To prove this proposition, we may use Proposition \thv(P.Prop1) and the fact  that
 any path starting at zero, ending at some $ \sqrt 2t/2-z$ with $z\in [At^\a,Bt^\a]$, and 
staying below the line $\sqrt 2 s$,
will not be much above $\sqrt 2t^\b-t^{\b\d}$ at time $t^\b$.

To do so, we decompose a bridge in time $t/2$ from $0$ to $z$ into two pieces, one from 
$0$ to $\sqrt 2t^\b-y$ in time $t^\b$ and one from $\sqrt 2t^\b-y$ to $\sqrt 2t/2-z$ in 
time $\tt\equiv t/2-t^\b$. Then the probability that the first bridge stays below $\sqrt 2 s$ 
is, to leading order in $t$, given by
\be
\Eq(onebridge.1)
\sqrt {\frac 2\pi} \frac {\sqrt r y}{t^\b-r},
\ee
while the probability for the second bridge is
$2 {yz}/{\tt}$.
These estimates follow  from Lemma 2.2. in \cite{B_C}.
Thus the probability that the bridge is above $\sqrt 2t^\b-t^{\b\d}$ is 
given by 
\be\Eq(onebridge.3)
\frac {\int_{0}^{t^{\b\d}} \frac  {\eee^{-\frac {y^2}{2t^\b}}}{\sqrt {2\pi t^\b}}
\sqrt {\frac 2\pi} \frac {\sqrt r y}{t^\b-r}\frac {2yz}{\tt}dy}
 {\int_{0}^{\infty} \frac  {\eee^{-\frac {y^2}{2t^\b}}}{\sqrt {2\pi t^\b}}\sqrt {\frac 2\pi}
 \frac {\sqrt r y}{t^\b-r}\frac {2yz}{\tt}dy}
=\frac {\int_{0}^{t^{\b(\d-1/2})}   {\eee^{-\frac {y^2}{2}}}
 y^2dy}
 {\int_{0}^{\infty}   {\eee^{-\frac {y^2}{2}}}
  y^2dy}\leq  t^{3\b(\d-1/2)}.
\ee
The right-hand side tends to zero for any $\d<1/2$, which implies the assertion of the proposition.
\end{proof}

The following simple lemma shows that if a condition holds for all paths that exceed some level, 
then this condition can also be imposed on the paths when computing the probability 
that the maximum stays below that level.

\begin{lemma}\TH(minimax.1)
Let $x_k, k=1,\dots, n$ be path-valued  random variables and  $\GG$ be any event such that, for some $\e>0$,
\be\Eq(minimax.2)
\P\left(\exists_{k\leq n}:  \{x_k(t) >y\}\land \{x_k\in \GG\}\right)\geq \P\left(\exists_{k\leq n}  x_k(t) >y\right) -\e.
\ee
Then 
\be\Eq(minimax.3)
\left|\P\left(\max_{k\leq n} x_k(t) \leq y\right)-
\P\left(\max_{k\leq n:x_k\in\GG}x_k(t) \leq y\right)\right|\leq \e.
\ee
\end{lemma}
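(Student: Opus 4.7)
The plan is a short set-theoretic argument; no probabilistic estimate beyond the hypothesis is needed. Write $A=\{\max_{k\leq n}x_k(t)\leq y\}$ and $B=\{\max_{k\leq n,\,x_k\in\GG}x_k(t)\leq y\}$, adopting the convention $\max\emptyset=-\infty$ so that $B$ includes the event that no $x_k$ belongs to $\GG$. The first observation is the obvious inclusion $A\subseteq B$: if every particle lies below $y$, then in particular every particle whose path lies in $\GG$ lies below $y$. Consequently $|\P(A)-\P(B)|=\P(B)-\P(A)=\P(B\cap A^c)$.

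Next, I would rewrite $B\cap A^c$ using exactly the events appearing in the hypothesis. Set $E=A^c=\{\exists\,k\leq n:x_k(t)>y\}$ and $F=\{\exists\,k\leq n:x_k(t)>y\text{ and }x_k\in\GG\}$. By definition $B$ is the event that no particle in $\GG$ exceeds $y$, hence $B=F^c$. Since $F\subseteq E$ this yields $B\cap A^c=F^c\cap E=E\setminus F$, so $\P(B\cap A^c)=\P(E)-\P(F)$.

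Finally, the hypothesis \eqv(minimax.2) is exactly the statement $\P(F)\geq\P(E)-\e$, from which $\P(B)-\P(A)\leq\e$ follows immediately. The only mildly delicate point is the convention for the empty maximum, invoked to secure the inclusion $A\subseteq B$; apart from that the lemma is a direct rewriting of the assumption and there is no substantial obstacle to overcome.
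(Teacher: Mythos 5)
Your proof is correct and follows essentially the same route as the paper: both rest on the inclusion $A\subseteq B$ (removing the absolute value), the identity $B=F^c$, and direct substitution of the hypothesis $\P(F)\geq\P(E)-\e$. The only stylistic difference is that you phrase the middle step set-theoretically via $\P(B\setminus A)=\P(E)-\P(F)$, while the paper just chains inequalities $\P(A)\leq\P(B)=1-\P(F)\leq 1-\P(E)+\e=\P(A)+\e$; your remark on the $\max\emptyset=-\infty$ convention is a sensible clarification that the paper leaves implicit.
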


\begin{proof} 
Obviously,
\bea\Eq(minimax.4)
\P\left(\max_{k\leq n}x_k (t)\leq y\right)&\leq&
\P\left(\max_{k\leq n:x_k\in\GG}x_k(t) \leq y\right)\nonumber\\
&=&1-\P\left(\exists_{k\leq n}: \{ x_k(t) >y\}\land \{x_k\in \GG\}\right)
\nonumber\\
&\leq& 1-\P\left(\exists_{k\leq n}:  x_k(t) >y\right)+\e
=\P\left(\max_{k\leq n}x_k(t) \leq y\right)+\e,
\eea
which proves the lemma.
\end{proof}

\subsection{Recursive structure}
We want to control
\be\Eq(prep.1)
 \P\left[\max_{1\leq k \leq n(t)}\tilde x_k(t)-m(t)> y\right]= \P\left[\max_{ k \leq n(t/2), \ell\leq n^k(t/2)}  \s_1 x_k(t/2)+\s_2 x^k_\ell(t/2)-m(t)> y\right],
\ee
where, for each $k$, $(x^k_\ell(\cdot))_{l\leq n^k(t/2)}$ are particles of an independent standard branching Brownian motion.
First, we  introduce several localisation conditions in \eqv(prep.1).  
For this we need to  define  shifted versions of the event $\GG$ and $\TT$ as
\bea\Eq(D.11.2)
\GG_{s,A,B,S,T,\g}&=&\bigl\{X\big \vert X(s)-\sqrt{2}s+S\in [-A  (s+T)^\g,-B(s+T)^\g]\bigr\},\nonumber\\
\TT_{s_1,s_2,S}&=&\bigl\{X \big \vert \forall s_1\leq s\leq s_2:\; X(s) +S\leq \sqrt{2}s  \bigr\}.
\eea
By Proposition \thv(P.Prop1) we have that
\bea\Eq(prep.3) 
&&\P\left[\max_{ k \leq n(t/2), \ell\leq n^k(t/2)}  \s_1 x_k(t/2)+\s_2 x^k_\ell(t/2)-m(t)> y\right]\\\nonumber
&&\leq
\P\left[\exists_{ k \leq n(t/2),\ell\leq n^k(t/2)}: \{\s_1 x_k(t/2)+\s_2 x^k_\ell(t/2)-m(t)> y\}
\land\{ x_k\in \GG_{t/2,A,B,\alpha}\}\right]+\epsilon,
\eea
for $B$ sufficiently close to zero and $A$ large enough. The probability on the right hand side of \eqv(prep.3) 
is also  a lower bound for \eqv(prep.1). Proceeding similarly, the probability in \eqv(prep.3) is equal (up to error terms of size $\epsilon$) to
\be\Eq(prep.4)
\P\left[\exists_{ k \leq n(t/2), \ell\leq n^k(t/2)}: \{\s_1 x_k(t/2)+\s_2 x^k_\ell(t/2)-m(t)> y\}\land
\{x_k\in \GG_{t/2,A,B,\alpha}\cap\HH_{\delta}\}\right],
\ee
for any $\delta>1/2$. Moreover, we can also introduce a condition on the path between time $t^\b$
and time $t/2$  and get that \eqv(prep.4) is equal to (again up to an error of order $\epsilon$)
\be\Eq(prep.5)
\P\left[\exists_{ k \leq n(t/2), \ell\leq n^k(t/2)}: \{\s_1 x_k(t/2)+\s_2 x^k_\ell(t/2)-m(t)> y\}
\land \{x_k\in \GG_{t/2,A,B,\alpha}\cap\HH_{\delta}\cap \TT_{t^{\b},t/2}\}\right].
\ee
Set 
\be\Eq(prep.6)
\LL_{t^\b,t/2,A,B}=\GG_{t/2,A,B,\alpha}\cap\HH_{\delta}\cap \TT_{t^{\b},t/2}.
\ee
In view of Lemma \thv(minimax.1), we only need to analyse 
\be\Eq(meta.5)
\P\left[\max_{ k \leq n(t/2):x_k\in \LL_{t^\b,t/2,A,B}    , \ell\leq n^k(t/2)}  \s_1 x_k(t/2)+\s_2 x^k_\ell(t/2)-m(t)\leq y \right]
\ee
in order to prove Theorem \thv(main.1).
Using the branching property, we can rewrite \eqv(meta.5) as 
\bea\Eq(prep.2)
&&\E\Biggl[ \prod_{\stackrel{k\leq n(t/2)}{x_k\in \LL_{t^\b,t/2,A,B}}}\P\left[\max_{l\leq n^k(t/2)}x^k_l(t/2)\leq \frac{m(t)+y-\s_1 x_k(t/2)}{\s_2}\big\vert \FF_{t/2}\right]\Biggl]\\\nonumber
&&=\E\Biggl[\prod_{\stackrel{k\leq n(t/2)}{x_k\in \LL_{t^\b,t/2,A,B}}}
\left(1-\P\left[\max_{l\leq n^k(t/2)}x^k_l(t/2)> \frac{m(t)+y-\s_1 x_k(t/2)}{\s_2}\big\vert \FF_{t/2}\right]\right)\Biggl],
\eea
where $\mathcal{F}_s$ with $s\leq t/2$  denotes the $\s$-algebra generated by $\left(x(u)\right)_{u\leq s }$.

As $x_k\in \GG_{t/2,A,B,\alpha}$, we can use the tail asymptotics given in Proposition \thv(A.Prop1) to control the conditional probability in \eqv(prep.2). Namely,
\be\Eq(prep.7)
 \P\left[\max_{\ell\leq n^k(t/2)}x^k_\ell(t/2)> \frac{m(t)+y-\s_1 x_k(t/2)}{\s_2}\big\vert \FF_{t/2}\right]
 = \CCC \Gamma_k(t)\eee^{-\sqrt{2}\Gamma_k(t)}  (1+o(1)),
\ee
where the $o(1)$ error term is uniform in the range of possible values for $x_k(t/2)$ as $x_k\in \GG_{t/2,A,B,\alpha}$ and 
\bea\Eq(prep.8)
\Gamma_k(t)&=&\frac{m(t)+y-\s_1 x_k(t/2)}{\s_2}-\left(\sqrt{2}\frac{t}{2}-\frac{3}{2\sqrt{2}}\log(t)\right)\nonumber\\
&=&\sfrac{\s_1}{\s_2} \left(\sqrt{2} \frac{t}{2}-x_k(t/2)\right)-  \frac{3}{2\sqrt2}\left(1-2\alpha\right)\log(t)+y/\s_2.
\eea
Plugging \eqv(prep.7) back into \eqv(prep.2) we obtain that the expectation in \eqv(prep.2) is equal to
\bea\Eq(prep.9)
&&\E\Biggl[\prod_{\stackrel{k\leq n(t/2)}{x_k\in \LL_{t^\b,t/2,A,B}}}\left(1- \CCC \Gamma_k(t)\eee^{-\sqrt{2}\Gamma_k(t)}\right)\Biggr](1+o(1))\nonumber\\
&&= \E\Biggl[\prod_{\stackrel{k\leq n(t/2)}{x_k\in \LL_{t^\b,t/2,A,B}}} \exp\left( -\CCC \Gamma_k(t)\eee^{-\sqrt{2}\Gamma_k(t)}\right)\Biggr](1+o(1)),
\eea
 since $\Gamma_k(t)>At^{\alpha}$  as $x_k\in \GG_{t/2,A,B,\alpha} $. Next, we rewrite the expectation in  \eqv(prep.9) by conditioning on $\FF_{t^\beta}$ as
 \be\Eq(prep.10)
 \E\Biggl[\prod_{\stackrel{k\leq n(t^\beta)}{x_k\in \HH_{\delta}}} 
 \E\Biggl[\prod_{\stackrel{j\leq n^k(t/2-t^\beta)}{x_j^k\in \GG_{t/2-t^\b,A,B,x_k(t^\beta)-\sqrt{2}t^\beta,t^\beta,\alpha}\cap\TT_{0,t/2-t^\beta,x_k(t^\beta)-\sqrt{2}t^\beta}}} 
 \exp\left(- \CCC \D_k(t)\eee^{-\sqrt{2}\D_k(t)}\right)\bigg \vert \FF_{t^\beta}\Biggr]\Biggr],
 \ee
with $\GG_{t/2-t^\b,A,B,x_k(t^\beta)-\sqrt{2}t^\beta,t^\beta,\alpha}$ and $\TT_{ 0,t/2-t^\beta,x_k(t^\beta)-\sqrt{2}t^\beta}$ as defined in \eqv(D.11.2), and
\be\Eq(prep.11)
\D_k(t)=\sfrac{\s_1}{\s_2} \left(\sqrt{2} \frac{t}{2}-x_k(t^\beta)-x^k_j(t/2-t^\beta)\right)-  \frac{3}{2\sqrt2}\left(1-2\alpha\right)\log(t)+y/\s_2,
\ee
where, for each $k$, $(x^k_j(\cdot))_{l\leq n^k(t/2-t^\b)}$ are particles of an independent standard branching Brownian motion.  We set  
\be\Eq(prep.15)
\wt\LL_{t^\beta,t/2-t^\b,x_k(t^\beta)}\equiv\GG_{t/2-t^\b,A,B,x_k(t^\beta)-\sqrt{2}t^\beta,t^\beta,\alpha}\cap\TT_{0,t/2-t^\beta,x_k(t^\beta)-\sqrt{2}t^\beta}.
\ee
We rewrite the inner expectation in \eqv(prep.10) as
\bea\Eq(prep.12)
&&\E\Biggl[ \exp\Biggl(-\sum_{\stackrel{j\leq n^k(t/2-t^\beta)}{x_j^k\in \wt\LL_{t^\beta,t/2-t^\b,x_k(t^\beta)}}} 
2^{3/2} C \D_k(t)\eee^{-\sqrt{2}\D_k(t)}\Biggr)\big \vert \FF_{t^\beta}\Biggr]\\
 \nonumber
 &&=
 \E\left[ \exp\left(-\sfrac{\s_1}{\s_2}\CCC (\sqrt{2}t^\beta -x_k(t^\beta))\eee^{-\sqrt{2}\sfrac{\s_1}{\s_2}(\sqrt{2}
 t^\beta -x_k(t^\beta))} \YY_k(t)
 -\sfrac{\s_1}{\s_2}\CCC  \eee^{-\sqrt{2}\sfrac{\s_1}{\s_2}(\sqrt{2}t^\beta -
x_k(t^\beta))} \ZZ_k(t)\right)
  \big \vert \FF_{t^\beta}\right],
\eea
where
\be\Eq(prep.13)
\YY_k(t)=\sum_{\stackrel{j\leq n^k(t/2-t^\beta)}{x_j^k\in \wt\LL_{t^\beta,t/2-t^\b,x_k(t^\beta)}}} 
\eee^{-\sqrt{2}\left(\sfrac{\s_1}{\s_2} \left(\sqrt{2} \tt-x^k_j(\tt)\right)-  \frac{3}{2\sqrt2}\left(1-2\alpha\right)\log(t)+y/\s_2\right)}
\ee
and
\bea\Eq(prep.14)
&&\ZZ_k(t)=\sum_{\stackrel{j\leq n^k(t/2-t^\beta)}{x_j^k\in \wt\LL_{t^\beta,t/2-t^\b,x_k(t^\beta)}}} 
 \left(\sfrac{\s_1}{\s_2} \left(\sqrt{2} \tt-x^k_j(\tt)\right)-  \sfrac{3}{2\sqrt2}\left(1-2\alpha\right)\log(t)+y/\s_2\right) \nonumber\\
&&\qquad\qquad\times \eee^{ -\sqrt{2}\left(\sfrac{\s_1}{\s_2} \left(\sqrt{2} \tt-x^k_j(\tt)\right)-  \frac{3}{2\sqrt2}\left(1-2\alpha\right)\log(t)+y/\s_2\right)}.
\eea
Next, we want upper and lower bounds on the expression in \eqv(prep.12)
To this end we use the basic inequality
 \be\Eq(inequality)
1-x\leq \eee^{-x}\leq 1-x+\frac{1}{2}x^2,\quad x>0,
\ee
 for 
 \be\Eq(prep.16)
 x=\sfrac{\s_1}{\s_2}\CCC (\sqrt{2}t^\beta -x_k(t^\beta))\eee^{-\sqrt{2}\sfrac{\s_1}{\s_2}(\sqrt{2}t^\beta -x_k(t^\beta))} \YY_k(t)
 +\sfrac{\s_1}{\s_2}\CCC  \eee^{-\sqrt{2}\sfrac{\s_1}{\s_2}(\sqrt{2}t^\beta -x_k(t^\beta))} \ZZ_k(t).
 \ee
 As the term $\eee^{-x}$ appears in the conditional expectation with respect to $\FF_{t^\b}$, we 
 need to compute $ \E(\YY_k(t)\vert \FF_{t^\b}), \E(\ZZ_k(t)\vert \FF_{t^\b})$ and control 
 $ \E(\YY_k(t)^2\vert \FF_{t^\b})$, $ \E(\ZZ_k(t)^2\vert \FF_{t^\b})$, and  $\E(\YY_k(t)\ZZ_k(t)\vert \FF_{t^\b})$.  
 
 \subsection{Computation of the main term} 
We begin with the computation of the averages of the McKean resp. derivative 
martingale terms. 
\begin{lemma}
\TH(expectations.1)
With the notation from the last subsection, 
\be\Eq(expectations.2)
 \E\left[\YY_k(t)|\FF_{t^\b}\right] = 2^{3/2} (\sqrt {2}t^\b-x_k(t^\b)) t^{-\a} \frac {\eee^{-\sqrt 2y}}{\sqrt {2\pi}}
 \left(1+o(1)\right),
 \ee
and 
\be
 \Eq(expectations.3)
 \E\left[\ZZ_k(t)|\FF_{t^\b}\right]=  ( {\sqrt {2}t^\b-x_k(t^\b)})  \frac{\eee^{-\sqrt 2y}}{\sqrt {2\pi }}(1 +o(1)),
 \ee
 where $o(1)$ tends to zero as first $t\uparrow \infty$ and then $B\downarrow0$ and $A\uparrow \infty$.
 \end{lemma}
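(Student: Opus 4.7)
The plan is to reduce each of $\E[\YY_k(t)|\FF_{t^\b}]$ and $\E[\ZZ_k(t)|\FF_{t^\b}]$ to a single-Brownian-motion integral via the many-to-one lemma, and then to evaluate it using the classical ballot-type estimate for a conditioned Brownian bridge. Writing $\rho=\s_1/\s_2$, $\xi_k=\sqrt 2t^\b-x_k(t^\b)$ and $Q(w)=\rho w-\sfrac{3(1-2\a)}{2\sqrt 2}\log t+y/\s_2$, and setting $w_j=\sqrt 2\tt-x^k_j(\tt)$, we have $\YY_k=\sum_j\eee^{-\sqrt 2 Q(w_j)}\1_{x^k_j\in\wt\LL_{t^\b,\tt,x_k(t^\b)}}$ and $\ZZ_k=\sum_j Q(w_j)\eee^{-\sqrt 2 Q(w_j)}\1_{x^k_j\in\wt\LL_{t^\b,\tt,x_k(t^\b)}}$. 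By the many-to-one lemma, conditionally on $\FF_{t^\b}$ each of these sums has expectation equal to $\eee^\tt$ times the corresponding one-particle expectation under a standard Brownian motion $B$ starting at $0$.

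In the variable $w=\sqrt 2\tt-B_\tt$, the factor $\eee^\tt$ cancels against the Gaussian normalisation to yield $\eee^{\sqrt 2 w-w^2/(2\tt)}/\sqrt{2\pi\tt}$, and combining with $\eee^{-\sqrt 2 Q(w)}=t^{3(1-2\a)/2}\eee^{-\sqrt 2 y/\s_2}\eee^{-\sqrt 2\rho w}$ produces the key exponential $\eee^{-\sqrt 2 w(\rho-1)-w^2/(2\tt)}$. Since $\rho-1=t^{-\a}(1+o(1))$, the substitution $u=(w+\xi_k)t^{-\a}$ converts this to $\eee^{-\sqrt 2 u(1+o(1))}\cdot\eee^{-u^2 t^{2\a-1}/2}$, and because $\a<1/2$ the Gaussian piece tends to $1$ uniformly on bounded $u$-intervals. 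The position constraint inside $\wt\LL$ restricts $u$ to $[B,A]$ up to an $O(\xi_k t^{-\a})$ error, which vanishes once we fix $\b<\a$ so that $\xi_k\leq t^\b\ll t^\a$ by Proposition~\thv(P.Prop2).

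For the path constraint $\TT_{0,\tt,x_k(t^\b)-\sqrt 2 t^\b}$ I will use the standard reflection: conditionally on $B_\tt=\sqrt 2\tt-w$, the process $C_s:=\sqrt 2 s+\xi_k-B_s$ is a Brownian bridge from $\xi_k$ to $\xi_k+w$ in time $\tt$, and $\TT$ becomes $\{C_s\geq 0\}$. The classical ballot estimate gives $\P(C\geq 0)=1-\eee^{-2\xi_k(\xi_k+w)/\tt}=(1+o(1))\cdot 2\xi_k(\xi_k+w)/\tt$, uniformly while $\xi_k(\xi_k+w)\ll\tt$, which is automatic for $\b<\a<1/2$. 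After the $u$-substitution this factor reads $2\xi_k\cdot ut^\a/\tt\cdot(1+o(1))$. Assembling everything, and using $\tt\sim t/2$ together with $\sqrt{2\pi\tt}\,\tt\sim(t/2)\sqrt{\pi t}$, the expectation $\E[\YY_k|\FF_{t^\b}]$ comes out proportional to $\xi_k t^{-\a}\eee^{-\sqrt 2 y}\int_B^A u\eee^{-\sqrt 2 u}du$; letting first $t\to\infty$ and then $B\downarrow 0$, $A\uparrow\infty$ and using $\int_0^\infty u\eee^{-\sqrt 2 u}du=1/2$ yields the announced constant $2^{3/2}/\sqrt{2\pi}$. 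The case of $\ZZ_k$ is identical except that the extra factor $Q(w)\sim w=ut^\a$ supplies one more power of $t^\a$ (cancelling the $t^{-\a}$ out front) and replaces $u$ by $u^2$ in the integrand; with $\int_0^\infty u^2\eee^{-\sqrt 2 u}du=1/\sqrt 2$ this reproduces the claimed constant $1/\sqrt{2\pi}$.

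The main delicate point will be to verify that all of the $o(1)$ error terms---from $\rho-1$ appearing both in the exponent and in the prefactor of $Q$, from the Gaussian $\eee^{-w^2/(2\tt)}$, from the higher-order corrections $\eee^{-x}-(1-x)$ in the ballot estimate, from the subdominant $-\sfrac{3(1-2\a)}{2\sqrt 2}\log t+y/\s_2$ pieces of $Q(w)$, and from the boundary shifts $\xi_k t^{-\a}$ in the interval of $u$-integration---are all small uniformly in $x_k(t^\b)$ consistent with $\HH_\d$. This uniformity is precisely what forces the limits to be taken in the order $t\to\infty$, then $B\downarrow 0$ and $A\uparrow\infty$.
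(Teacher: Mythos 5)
Your proposal follows the same route as the paper: many-to-one reduction to a single Brownian particle, the ballot (Brownian-bridge) estimate for the path constraint, a change of variable to the $t^\alpha$ scale, and identification of the limiting Gamma-type integrals, with the double limit $t\to\infty$, then $B\downarrow 0$, $A\uparrow\infty$. The structural steps and the sources of the $o(1)$ errors you list are exactly the ones the paper controls.

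One point needs care, and it concerns the constant in \eqv(expectations.3). You compute $\int_0^\infty u^2 \eee^{-\sqrt 2 u}\,du = 1/\sqrt 2 = 2^{-1/2}$, which is correct. But you then assert this \emph{reproduces} the coefficient $1/\sqrt{2\pi}$ without carrying through the bookkeeping. If you track the prefactor the same way as for $\YY_k$ — where the prefactor $\kappa = 2^{5/2}/\sqrt{2\pi}$ times $\int_0^\infty u\,\eee^{-\sqrt 2 u}\,du = 1/2$ yields the announced $2^{3/2}/\sqrt{2\pi}$ — then the $\ZZ_k$ computation gives $\kappa \cdot 2^{-1/2} = 2^{2}/\sqrt{2\pi} = 4/\sqrt{2\pi}$, not $1/\sqrt{2\pi}$. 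The paper itself avoids this discrepancy only by asserting after \eqv(derivative.1) that $\int_0^\infty z^2\eee^{-\sqrt 2 z}\,dz = 2^{-5/2}$, which is an arithmetic slip (the value is $2^{-1/2}$); that slip exactly compensates and yields their $1/\sqrt{2\pi}$. So your integral is right, but your claim that it "reproduces the claimed constant" is not substantiated, and in fact is inconsistent with the prefactor you just used for $\YY_k$. You should either exhibit the full prefactor computation and flag the mismatch, or locate where an extra factor of $1/4$ comes from (it does not appear to). A second, smaller issue: the bound $\xi_k\leq t^\beta$ you invoke does not follow from Proposition~\thv(P.Prop2) (that proposition gives a \emph{lower} bound $\xi_k\geq t^{\beta\delta}$); the upper estimate $\xi_k\ll t^\alpha$ that you actually need is a consequence of choosing $\beta$ small enough relative to $\alpha$ so that typical fluctuations of $x_k(t^\beta)$, of order $t^{\beta/2}$, are negligible, and should be justified along those lines rather than deterministically.
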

 
 \begin{proof}
 We start with the conditional  expectation of $\YY_k(t)$. 
  Using the many-to-one lemma, we get
 \bea\Eq(mckean.0)
 && \E\left[\YY_k(t)|\FF_{t^\b}\right]=
  \eee^{\tt}\E\Bigl[\eee^{-\sqrt{2}\left(\sfrac{\s_1}{\s_2} \left(\sqrt{2} \tt-x(\tt)\right)-  \frac{3}{2\sqrt2}\left(1-2\alpha\right)\log(t)+y/\s_2\right)}
\1_{\forall_{0\leq s\leq \tt} x(s)+x_k(t^\b)-\sqrt 2 t^\b\leq \sqrt 2 s}\nonumber\\
 && \hspace {4cm}\times  \1_{x(\tt)-\sqrt 2\tt+x_k(t^\b)-\sqrt 2 t^\b\in [-A(t/2)^\a,-B(t/2)^\a]}\Bigr].
  \eea
  The two conditions in the indicator functions can be expressed in terms of 
 a Brownian  bridge from $x_k(t^\b)$ to its endpoint $x(\tt)$ that must  stay below $\sqrt 2 s$ all the time.
  This condition produces a factor $2\frac {(\sqrt {2}t^\b-x_k(t^\b))(\sqrt2t/2-x_k(\tt))}\tt$. 
  Using the independence of the bridge  from its endpoint, this allows us write
 \bea
 \Eq(mckean.1)
 &&
 \E\left[\YY_k(t)|\FF_{t^\b}\right]\\
 &&= 2 t^{3/2-3\a}\eee^{\tt}  \int_{\sqrt 2\tt-At^\a}^{\sqrt 2\tt-Bt^\a}
 \frac {(\sqrt {2}t^\b-x_k(t^\b))(\sqrt2t/2-z)}\tt\frac{\eee^{-\frac {z^2}{2\tt}}}{\sqrt{2\pi\tt}}
 \eee^{\sqrt 2 \frac{\s_1}{\s_2}(z-\sqrt 2\tt)}dz \eee^{-\sqrt 2y/\s_2}\nonumber\\
 &&=2\eee^{\tt}\frac {\sqrt {2}t^\b-x_k(t^\b)}\tt  t^{3/2-3\a}
  \int_{-At^\a}^{-Bt^\a}
  (-z)\frac{\eee^{-\frac {(z+\tt)^2}{2\tt}}}{\sqrt{2\pi\tt}}
 \eee^{\sqrt 2 \frac{\s_1}{\s_2}z}dz\eee^{-\sqrt 2y}(1+o(1))\nonumber\\\nonumber
 &&=2^{5/2}( {\sqrt {2}t^\b-x_k(t^\b)})   t^{-3\a}
  \int_{-At^\a}^{-Bt^\a}
  (-z)\frac{\eee^{ \sqrt 2 (\frac{\s_1}{\s_2}-1) z-   \frac{z^2}{2\tt}}}{\sqrt{2\pi}}
 \eee^{\sqrt 2 \frac{\s_1}{\s_2}z}dz\eee^{-\sqrt 2y}(1+o(1)).
 \eea
 The second inequality uses that we have chosen $\b$ so small such that $t^\b\ll z$ in the domain of integration
 so that we can replace $t/2$ be $\tt$ without making a significant error. 
 In the range of integration, the term $\frac{z^2}{2\tt}$ vanishes, as $t\uparrow \infty$. 
The integral in the last line thus becomes
\be\Eq(mckean.2)
   \int_{-At^\a}^{-Bt^\a}
  (-z)\frac{\eee^{ \sqrt2 zt^{-\a}}}{\sqrt{2\pi}}dz =\frac{t^{2\a}}{\sqrt {2\pi}}
   \int_{B}^{A}
  z\eee^{- \sqrt2 z}dz.
  \ee
  As $A\uparrow\infty$ and $B\downarrow 0$, the last integral converges to $1/2$. 
  This yields \eqv(expectations.2).
 
 Next, we treat the conditional  expectation of $\ZZ_k(t)$.  It is evident from the previous calculations, that 
 the terms in front of the exponential with the logarithm and the $y$  in \eqv(prep.14) 
 will tend to zero. What is left of the conditional expectation of $\ZZ$ is 
\bea\Eq(derivative.1) 
&& 2^{5/2} ({\sqrt {2}t^\b-x_k(t^\b)})   t^{-3\a}
  \int_{-At^\a}^{-Bt^\a}
  z^2\frac{\eee^{-\sqrt 2 t^{-\a} z-   \frac{z^2}{2\tt}}}{\sqrt{2\pi}}
 dz\eee^{-\sqrt 2y}  (1+o(1)) \nonumber\\
&&=  2^{5/2} ( {\sqrt {2}t^\b-x_k(t^\b)})    \frac{\eee^{-\sqrt 2 y}}{\sqrt {2\pi}} \int_{B}^{A}
  z^2{\eee^{-\sqrt2 z}}dz(1+o(1)).
 \eea
 The last integral converges to $2^{-5/2}$, as $A\uparrow\infty, B\downarrow 0$. Thus we get 
 \eqv(expectations.3). This concludes the proof of the lemma.
 \end{proof}
 
 \begin{remark} It is curious to see that the terms $ \sqrt {2}t^\b-x_k(t^\b)$
 appear and recreate the derivative martingale as a factor of $\E \ZZ_k(t)$. 
 If we had been a bit more sloppy and used as the probability
 for the bridge just $\frac {t^{\b/2} t^{\a}}\tt$, we would instead have gotten just 
 a factor $t^{1/2}$ multiplying the McKeane martingale. But, nothing would have changed, since by a result of A\"\i d\'ekon and Shi \cite{AideShi}, this would converge in probability to 
 a limit that has the same law as the limit of the derivative martingale. 
 \end{remark}

\subsection{Controlling the second moment}
 We now  show that the expectations of the quadratic terms are bounded
by a polynomial term in $t^\a$, see \eqv(prep.18) below. 
For this it is enough to show that  $\E \left[\YY(\tt)^2\right]\leq P(t^\a)$, for $P$ some polynomial. 
Dropping all irrelevant terms that are controlled by some power of $t$, we are left with 
computing 
\be\Eq(square.1)
\E\left(\sum_{j=1}^ {n(\tt)}
\1_{x_j\in \wt\LL_{t^\beta,t/2-t^\b,x_k(t^\beta)}} \eee^{\sqrt 2 \frac{\s_1}{\s_2}\left(x_j(\tt)-\sqrt 2\tt\right)}
\right)^2.
\ee
Using the many-to-two lemma, this is bounded by
\bea
\Eq(square.2)
&&\int_0^{\tt} ds\; \eee^{\tt+s}
\int_{-\infty}^{\sqrt 2(\tt-s)} 
\frac{dw\eee^{-\frac{w^2}{2(\tt-s)}}}{\sqrt{2\pi(\tt-s)}}
\left(
\int_{\sqrt 2\tt-w-At^\a}^{\sqrt 2\tt-w-Bt^\a}
\frac{dz\eee^{-\frac{z^2}{2s}}}{\sqrt{2\pi s}}\eee^{\sqrt 2\frac{\s_1}{\s_2}(w+z-\sqrt 2\tt)}\right)^2.\nonumber\\
\eea
Shifting the $w$-integral, this equals
\bea
\Eq(square.3)
&&\int_0^{\tt} ds\; \eee^{\tt+s}
\int_{-\infty}^0
 \frac{dw\eee^{-\frac{(w+\sqrt2(\tt-s))^2}{2(\tt-s)}}}{\sqrt{2\pi(\tt-s)}}
\left(
\int_{\sqrt 2s-w-At^\a}^{\sqrt 2s-w-Bt^\a}
\frac{dz\eee^{-\frac{z^2}{2s}}}{\sqrt{2\pi s}}\eee^{\sqrt 2\frac{\s_1}{\s_2}(w+z-\sqrt 2s)}\right)^2\nonumber\\
&&=\int_0^{\tt} ds \;\eee^{2s} 
\int_{-\infty}^0
 \frac{dw\eee^{-\sqrt 2 w  -  \frac{w^2}{2(\tt-s)}}}{\sqrt{2\pi(\tt-s)}}
 \left(
\int_{\sqrt 2s-w-At^\a}^{\sqrt 2s-w-Bt^\a}
\frac{dz\eee^{-\frac{z^2}{2s}}}{\sqrt{2\pi s}}\eee^{\sqrt 2\frac{\s_1}{\s_2}(w+z-\sqrt 2s)}\right)^2.
\eea
Now, 
\bea\Eq(square.4)&&
\int_{\sqrt 2s-w-A\t^\a}^{\sqrt 2s-w-B\t^\a}
\frac{dz\eee^{-\frac{z^2}{2s}}}{\sqrt{2\pi s}}\eee^{\sqrt 2\frac{\s_1}{\s_2}(w+z-\sqrt 2s)}
=
\int_{-At^\a}^{-Bt^\a}
\frac{dz\eee^{-\frac{(z-w+\sqrt 2s)^2}{2s}}}{\sqrt{2\pi s}}\eee^{\sqrt 2\frac{\s_1}{\s_2}z}
\nonumber\\
&&=\eee^{-s}\eee^{\sqrt 2w -\frac{w^2}{2s}}
\int_{-At^\a}^{-Bt^\a}
\frac{dz\eee^{-zw/s+\sqrt 2\left(\frac{\s_1}{\s_2}-1\right)z-\frac{z^2}{2s}}}{\sqrt{2\pi s}}.
\eea
Inserting this into \eqv(square.3), we arrive at
\be\Eq(square.5)
\int_0^{\tt}ds\int_{-\infty}^0 \frac{dw\eee^{\sqrt 2 w  -  \frac{w^2(2\tt-s)}{2s(\tt-s)}}}{\sqrt{2\pi(\tt-s)}}
\left(\int_{-At^\a}^{-Bt^\a}
\frac{dz\eee^{-zw/s+\sqrt 2t^{-\a}z-\frac{z^2}{2s}}}{\sqrt{2\pi s}}
\right)^2.
\ee
To the level of precision we care about, the integral in the square can be bounded by the maximum of its integrand, i.e. 
\bea\Eq(square.4)
\int_{-At^\a}^{-Bt^\a}
\frac{dz\eee^{-zw/s+\sqrt 2t^{-\a}z-\frac{z^2}{2s}}}{\sqrt{2\pi s}}
\leq 
\eee^{\sqrt 2 A}.
\eea
The remaining integral over $w$ is trivially bounded by $\sqrt{s/(2\tt-s)}$, which is
smaller than $1$, and we are done.

\subsection{Towards the derivative martingale}

We have seen that 
 \bea\Eq(prep.17)
 \E(\YY_k(t)\vert \FF_{t^\b})&=&\text{const.}(\sqrt{2}t^\b-x_k(t^\b) t^{-\a}(1+o(1)) \quad\mbox{and}\\
  \E(\ZZ_k(t)\vert \FF_{t^\b})&=&\text{const.}(\sqrt{2}t^\b-x_k(t^\b))(1+o(1)).
 \eea
 Moreover,  $ \E(\YY_k(t)^2\vert \FF_{t^\b})$ and $ \E(\ZZ_k(t)^2\vert \FF_{t^\b})$ and  $\E(\YY_k(t)\ZZ_k(t)\vert \FF_{t^\b})$ grow at most polynomially in $t^{\alpha}$.
 Then it follows, with $x$ as in \eqv(prep.16) and since $x_k\in \HH_{\delta}$, that
 \be\Eq(prep.18)
 \frac{\E[x^2\vert \FF_{t^\b}]}{\E[x\vert \FF_{t^\b}]}\leq
   \eee^{\sfrac{\s_1}{\s_2}(\sqrt{2}x_k(t^\b)-2t^\b)} P(t^\a)\leq C \eee^{-t^{\delta\b}}P(t^\a).
 \ee
    The right-hand side of \eqv(prep.18) converges to zero, as $t\uparrow \infty$. Using \eqv(inequality) 
    together with \eqv(prep.18), we get that the expected value in \eqv(prep.12) is equal to
   \bea\Eq(prep.19)
&& \Bigl (1-  \sfrac{\s_1}{\s_2}\CCC (\sqrt{2}t^\beta -x_k(t^\beta))\eee^{-\sqrt{2}\sfrac{\s_1}{\s_2}(\sqrt{2}t^\beta -x_k(t^\beta))} \E\left[\YY_k(t)\vert \FF_{t^\b}\right]\nonumber\\
&&\quad
 -\sfrac{\s_1}{\s_2}\CCC  \eee^{-\sqrt{2}\sfrac{\s_1}{\s_2}(\sqrt{2}t^\beta -x_k(t^\beta))} \E\left[\ZZ_k(t) \vert \FF_{t^\b}\right]\Bigr)(1+o(1)).
     \eea
    Plugging \eqv(prep.19) back into \eqv(prep.10), we get that \eqv(prep.10) is equal to
    \bea\Eq(prep.20)
   && \E\Biggl[\prod_{\stackrel{k\leq n(t^\beta)}{x_k\in \HH_{\d}}}  \Bigl (1-  \sfrac{\s_1}{\s_2}C (\sqrt{2}t^\beta -x_k(t^\beta))\eee^{-\sqrt{2}\sfrac{\s_1}{\s_2}(\sqrt{2}t^\beta -x_k(t^\beta))} 
   \E\left[\YY_k(t)\vert \FF_{t^\b}\right] (1+o(1))\nonumber\\
   &&\qquad
  -\sfrac{\s_1}{\s_2}\CCC  \eee^{-\sqrt{2}\sfrac{\s_1}{\s_2}(\sqrt{2}t^\beta -x_k(t^\beta))} \E\left[\ZZ_k(t) \vert \FF_{t^\b}\right]\Bigr)\Biggr]\nonumber\\\nonumber
 &=& \E\Big[\exp\Big(-\sum_{\stackrel{k\leq n(t^\beta)}{x_k\in \HH_{\d}}}  \Big(   \sfrac{\s_1}{\s_2}\CCC (\sqrt{2}t^\beta -x_k(t^\beta))\eee^{-\sqrt{2}\sfrac{\s_1}{\s_2}(\sqrt{2}t^\beta -x_k(t^\beta))} \E\left[\YY_k(t)\vert \FF_{t^\b}\right]\\
&&\qquad -\sfrac{\s_1}{\s_2}C  \eee^{-\sqrt{2}\sfrac{\s_1}{\s_2}(\sqrt{2}t^\beta -x_k(t^\beta))} \E\left[\ZZ_k(t) \vert \FF_{t^\b}\right]\Big)\Big)\Big](1+o(1)),
    \eea
since  $\E(x)\leq Q(t^\a)\eee^{-At^\a}$, uniformly in $x_k$, for some polynomial $Q$ as $x_k\in \GG_{t^{\b},A,B,\gamma}$.
    
 Next, we observe that, by  Lemma \thv(expectations.1),
 \bea\Eq(prep.21)
 &&\sum_{\stackrel{k\leq n(t^\beta)}{x_k\in \HH_{\d}}}     \sfrac{\s_1}{\s_2}\CCC (\sqrt{2}t^\beta -x_k(t^\beta))\eee^{-\sqrt{2}\sfrac{\s_1}{\s_2}(\sqrt{2}t^\beta -x_k(t^\beta))} \E\left[\YY_k(t)\vert \FF_{t^\b}\right]\nonumber\\
 &=& \sum_{\stackrel{k\leq n(t^\beta)}{x_k\in \HH_{\d}}}     \sfrac{\s_1}{\s_2}\CCC (\sqrt{2}t^\beta -x_k(t^\beta))^2\eee^{-\sqrt{2}\sfrac{\s_1}{\s_2}(\sqrt{2}t^\beta -x_k(t^\beta))}t^{-\alpha}\left(\frac{\eee^{-\sqrt 2y}}{\sqrt {2\pi }} +o(1)\right)\nonumber\\
 &\leq &\text{const.} t^{\beta\d-\alpha}\sum_{\stackrel{k\leq n(t^\beta)}{x_k\in \HH_{\d}}} 
 (\sqrt{2}t^\beta -x_k(t^\beta))\eee^{-\sqrt{2}\sfrac{\s_1}{\s_2}(\sqrt{2}t^\beta -x_k(t^\beta))},
 \eea
 which converges to zero in probability since  $\beta\d<\alpha$ and  the sum in the last line converges to the
 limit of the derivative martingale in probability, as will be shown in 
 Lemma \thv(almost.deriv) below.
 On the other hand, using \eqv(expectations.3) to estimate the remaining term in \eqv(prep.20), we obtain
 \bea\Eq(prep.22)
 &&\sum_{\stackrel{k\leq n(t^\beta)}{x_k\in \HH_{\d}}}     \sfrac{\s_1}{\s_2}\CCC  \eee^{-
 \sqrt{2}\sfrac{\s_1}{\s_2}(\sqrt{2}t^\beta -x_k(t^\beta))} \E\left[\ZZ_k(t)\vert \FF_{t^\b}\right]\\
 \nonumber
 &=& \sum_{\stackrel{k\leq n(t^\beta)}{x_k\in \HH_{\d}}}     \sfrac{\s_1}{\s_2}\CCC (\sqrt{2}t^\beta -
 x_k(t^\beta))\eee^{-\sqrt{2}\sfrac{\s_1}{\s_2}(\sqrt{2}t^\beta -x_k(t^\beta))} \left(\frac{\eee^{-\sqrt 
 2y}}{\sqrt {2\pi }} +o(1)\right).
 \eea
 The last expression almost looks like  (recall \eqv(derimar.1))
\be\Eq(prep.23)
\CCC Z(t^\b)\frac{\eee^{-\sqrt 2y}}{\sqrt {2\pi }}.
\ee
The next lemma asserts that this is indeed the case.

\subsection{Control of the almost martingale}
\begin{lemma}\TH(almost.deriv)  
With the notation above, 
\be\Eq(mart.1)
\sum_{k=1}^{n(t^\b)}\sfrac{\s_1}{\s_2}(\sqrt{2}t^\b-x_k(t^\b))\eee^{-\sqrt{2}\sfrac{\s_1}{\s_2}(\sqrt{2}t^\b-x_k(t^\b))}
\rightarrow Z,\quad 
\ee
in probability, as $t\uparrow \infty$, where $Z$ is the limit of the derivative martingale. Moreover,
\be\Eq(mart.2)
\sum_{k=1}^{n(t^\b)}\1_{\{x_k\in \HH_{\delta} \}}\sfrac{\s_1}{\s_2}(\sqrt{2}t^\b-x_k(t^\b))\eee^{-\sqrt{2}\sfrac{\s_1}{\s_2}(\sqrt{2}t^\b-x_k(t^\b))}\rightarrow Z,
\ee
as $t \uparrow\infty$, in probability. 
\end{lemma}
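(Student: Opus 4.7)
The plan is to prove \eqv(mart.1) by direct comparison with the standard derivative martingale $Z(t^\b)=\sum_k D_k\eee^{-\sqrt 2 D_k}$, where $D_k\equiv\sqrt 2 t^\b-x_k(t^\b)$, and then deduce \eqv(mart.2) from \eqv(mart.1) by showing that the particles excluded by the indicator $\1_{x_k\in\HH_\d}$ contribute asymptotically negligibly. By Lalley-Sellke \cite{LS}, $Z(t^\b)\to Z$ almost surely, hence in probability, as $t\uparrow\infty$. Writing $a\equiv\s_1/\s_2=1+t^{-\a}+O(t^{-2\a})$, the left-hand side of \eqv(mart.1) is $S(t)=a\sum_k D_k\eee^{-\sqrt 2 a D_k}$, and it will suffice to show $S(t)-Z(t^\b)\to 0$ in probability.

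I would truncate at $K(t)=t^\g$ with $\b/2<\g<\a$, which is possible once $\b$ is chosen small enough that $\b<2\a$. On the low part $D_k\leq K(t)$, the uniform expansion $a\eee^{-\sqrt 2(a-1)D_k}=1+O(t^{-\a}(1+D_k))$ yields
\be
\left|\sum_{D_k\leq K(t)}D_k\bigl(a\eee^{-\sqrt 2 a D_k}-\eee^{-\sqrt 2 D_k}\bigr)\right|\leq C\,t^{\g-\a}\sum_{D_k\leq K(t)}D_k\eee^{-\sqrt 2 D_k},
\ee
which goes to zero in probability since $\g<\a$ and the sum on the right is dominated by $|Z(t^\b)|$ plus a high-part tail, both bounded in probability. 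On the high part $D_k>K(t)$, the inequality $a\eee^{-\sqrt 2 a D_k}\leq\eee^{-\sqrt 2 D_k}$ (valid for $D_k>0$ since $a>1$) shows that both $S_{>K(t)}(t)$ and $Z_{>K(t)}(t^\b)$ are bounded in absolute value by $2\sum_{D_k>K(t),\,D_k>0}D_k\eee^{-\sqrt 2 D_k}$. A standard many-to-one computation gives
\be
\E\Bigl[\sum_{D_k>K(t),\,D_k>0}D_k\eee^{-\sqrt 2 D_k}\Bigr]=\int_{K(t)}^\infty\frac{u\,\eee^{-u^2/(2t^\b)}}{\sqrt{2\pi t^\b}}\,du=\sqrt{t^\b/(2\pi)}\,\eee^{-t^{2\g-\b}/2},
\ee
which vanishes as $t\uparrow\infty$ thanks to $\g>\b/2$, and Markov's inequality turns this into convergence in probability. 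Noting that with high probability no particle has $D_k<0$ (the maximum of BBM at time $t^\b$ is only $O(\log t^\b)$ below $\sqrt 2 t^\b$), one combines the two parts to conclude $S(t)-Z(t^\b)\to 0$ in probability, proving \eqv(mart.1).

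For \eqv(mart.2), particles in $\HH_\d$ satisfy $D_k\geq t^{\b\d}$, so the sum in \eqv(mart.2) differs from $S(t)$ only by $a\sum_{0<D_k<t^{\b\d}}D_k\eee^{-\sqrt 2 a D_k}$ (up to a w.h.p.\ vanishing contribution from any $D_k<0$ particles). The same many-to-one computation gives for its expectation
\be
\int_0^{t^{\b\d}}\frac{u\,\eee^{-u^2/(2t^\b)}}{\sqrt{2\pi t^\b}}\,du\leq \frac{t^{\b(2\d-1/2)}}{2\sqrt{2\pi}},
\ee
which tends to zero as long as $\d<1/4$. This choice is consistent with the other constraints used earlier (namely $\b\d<\a$ in \eqv(prep.21) and $\d\in(0,1/2)$ from Proposition \thv(P.Prop2)), so Markov again closes the argument.

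\textbf{Main obstacle.} The subtle step is \eqv(mart.2): the $\HH_\d$ restriction removes \emph{small}-$D_k$ (near-maximum) particles rather than a tail, and the relevant many-to-one expectation scales as $t^{\b(2\d-1/2)}$, which we control only when $\d<1/4$. The flexibility in Proposition \thv(P.Prop2) allowing $\d$ to be taken arbitrarily small in $(0,1/2)$ is therefore essential. A secondary issue is the non-$L^1$ nature of the derivative martingale, which is sidestepped by working only with positive-part sums via the w.h.p.\ positivity of all $D_k$ at time $t^\b$.
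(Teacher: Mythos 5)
Your proof is correct, and it follows essentially the same overall strategy as the paper's: truncate the sum at an intermediate scale between $t^{\b/2}$ and $t^{\a}$, show the far tail is negligible by a many-to-one computation, compare the near part with the derivative martingale (using that the exponent $\s_1/\s_2$ is $1+O(t^{-\a})$), and invoke Lalley--Sellke a.s.\ convergence. One point of genuine divergence is your treatment of the $\HH_\d$-restricted sum in \eqv(mart.2). For the contribution of particles with $0<D_k<t^{\b\d}$, the paper first inserts the barrier event $\TT_{r,t^\b}$ (at cost $\e$, using Proposition \thv(P.Prop0)); the resulting Brownian-bridge factor $\sim (\sqrt 2 t^\b-x_k(t^\b))\sqrt r/t^\b$ improves the many-to-one estimate to $O\bigl(\int_0^{t^{\b(\d-1/2)}}y^2\eee^{-y^2/2}dy\bigr)$, which vanishes for all $\d<1/2$, matching the full range allowed by Proposition \thv(P.Prop2). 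Your direct many-to-one estimate, without the barrier, yields $O(t^{\b(2\d-1/2)})$ and hence the stronger constraint $\d<1/4$. Since $\d$ is a free parameter (all the other places where $\d$ enters only require $\d>0$ small, $\b\d<\a$, and $\d<1/2$), this does not break the proof; you simply sacrifice a bit of generality in exchange for avoiding the barrier argument. Two small remarks: (a) the intermediate inequality ``$a\eee^{-\sqrt 2 aD_k}\leq\eee^{-\sqrt 2 D_k}$ for $D_k>0$'' is false near $D_k=0$ (the left side equals $a>1$ there); what you actually use, and what is correct, is the bound $a\eee^{-\sqrt 2 aD_k}\leq a\eee^{-\sqrt 2 D_k}\leq 2\eee^{-\sqrt 2D_k}$, which is what produces your factor $2$. (b) The paper's truncation scale is $t^{\b\g}$ with $\g\in(1/2,1)$, whereas you use $t^\g$ with $\g\in(\b/2,\a)$; these are equivalent re-parametrisations and both correctly sit between the Gaussian scale $t^{\b/2}$ and the perturbation scale $t^\a$.
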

\begin{proof}
As $\s_1^2=1+t^{-\a}$ and $\s_2^2=1-t^{-\a}$, we have 
$\sfrac{\s_1}{\s_2}=1+t^{-\alpha}+o(t^{-\alpha})$,
 and to prove \eqv(mart.1) it is enough to show that
\be\Eq(mart.3)
\sum_{k=1}^{n(t^\b)}(\sqrt{2}t^\b-x_k(t^\b))\eee^{-\sqrt{2}\sfrac{\s_1}{\s_2}(\sqrt{2}t^\b-x_k(t^\b))}
\rightarrow Z, \quad t\uparrow\infty.
\ee
Next, we introduce for some $1>\gamma>1/2$
\be\Eq(mart.5)
1=\1_{x_k(t^\b)>\sqrt{2}t^\b-t^{\beta\gamma}}+\1_{x_k(t^\b)\leq\sqrt{2}t^\b-t^{\b\gamma}}.
\ee
We control the two resulting terms separately and start with 
\be\Eq(mart.8.1)
\sum_{k=1}^{n(t^\b)}\1_{x_k(t^\b)\leq\sqrt{2}t^\b-t^{\b\gamma}}(\sqrt{2}t^\b-x_k(t^\b))\eee^{-\sqrt{2}(\sqrt{2}t^\b-x_k(t^\b))}\eee^{-\sqrt{2}(\s_1/\s_2-1)(\sqrt{2}t^\b-x_k(t^\b))}.
\ee
We want to show that the term in \eqv(mart.8.1) converges to zero in probability. By the Markov inequality, it is enough to show that the expectation of \eqv(mart.8.1) converges to zero, as $t\uparrow\infty$. By the many-to-one lemma, we have
\bea\Eq(mart.8)
&&\E\Biggl[\sum_{k=1}^{n(t^\b)}\1_{x_k(t^\b)\leq\sqrt{2}t^\b-t^{\b\gamma}}(\sqrt{2}t^\b-x_k(t^\b))\eee^{-\sqrt{2}\sfrac{\s_1}{\s_2}(\sqrt{2}t^\b-x_k(t^\b))}\Biggr]\nonumber\\
&&= \eee^{t^\b} \int_{-\infty}^{\sqrt{2}t^\b-t^{\gamma \beta}} \frac{dx}{\sqrt{2\pi t^\b}}
\eee^{-\frac{x^2}{2t^\b}}
(\sqrt{2}t^\b-x)\eee^{-\sqrt{2}\sfrac{\s_1}{\s_2}(\sqrt{2}t^\b-x)}
\nonumber\\
&&\leq
\eee^{t^\b} \int_{-\infty}^{\sqrt{2}t^\b-t^{\gamma \beta}} \frac{dx}{\sqrt{2\pi t^\b}}
\eee^{-\frac{\left(x-\sqrt{2}t^\b\s_1/\s_2\right)^2}{2t^\b}   +(\s_1/\s_2-2)(\s_1/\s_2)t^\b  }
(\sqrt{2}t^\b-x) (1+o(1))
\nonumber\\
&&= \eee^{t^{\b-2\a}} \int_{-\infty}^{-t^{\gamma \beta}-\sqrt{2}t^{\b-\a}} \frac{dx}{\sqrt{2\pi t^\b}}\left(x+\sqrt{2}t^{\b-\a}\right)
\eee^{-\frac{x^2}{2t^\b}}(1+o(1))
\nonumber\\
&&\leq\text{const.}\exp\left({-\frac{(t^{\gamma \beta}+\sqrt{2}t^{\b-\a})^2}{2t^\b}}\right), 
\eea
where we computed the integral explicitly for the first summand and used Gaussian tail asymptotics for the second. The term in \eqv(mart.8) converges to zero, as $t\uparrow\infty$. 
Next, we turn to 
\bea\Eq(mart.6)
&&\sum_{k=1}^{n(t^\b)}\1_{x_k(t^\b)>\sqrt{2}t^\b-t^{\b\gamma}}(\sqrt{2}t^\b-x_k(t^\b))\eee^{-\sqrt{2}\sfrac{\s_1}{\s_2}(\sqrt{2}t^\b-x_k(t^\b))}\nonumber\\
&&> \eee^{-\sqrt{2}O(t^{-\alpha})t^{\b\gamma}}\sum_{k=1}^{n(t^\b)}\1_{x_k(t^\b)>\sqrt{2}t^\b-t^{\b\gamma}}(\sqrt{2}t^\b-x_k(t^\b))\eee^{-\sqrt{2}(\sqrt{2}t^\b-x_k(t^\b))}.
\eea
Note that the prefactor in \eqv(mart.6) converges to one, as $t^{\beta\gamma}<t^{\alpha}$. Moreover, as in  \cite{LS}, since  $x_k(t^\beta)\leq\sqrt{2}t^\beta$, a.s.,  using just that $\s_1/\s_2\geq 1$, the first line in \eqv(mart.6) is also bounded from above by
\bea\Eq(mart.6.1)
&&  \sum_{k=1}^{n(t^\b)}\1_{x_k(t^\b)>\sqrt{2}t^\b-t^{\b\gamma}}(\sqrt{2}t^\b-x_k(t^\b))\eee^{-\sqrt{2}(\sqrt{2}t^\b-x_k(t^\b))},\;\; \hbox{\rm a.s.}.
\eea
Since $\sum_{k=1}^{n(t^\b)} (\sqrt{2}t^\b-x_k(t^\b))\eee^{-\sqrt{2}(\sqrt{2}t^\b-x_k(t^\b))}$ converges to $Z$ almost surely  (see  \cite{LS}), to prove \eqv(mart.2) it is enough  to show that 
\be\Eq(mart.100)
\sum_{k=1}^{n(t^\b)}\1_{x_k(t^\b)\leq\sqrt{2}t^\b-\t^{\b\gamma}}(\sqrt{2}t^\b-x_k(t^\b))\eee^{-\sqrt{2}(\sqrt{2}t^\b-x_k(t^\b))}\to 0,
\ee
in probability as $t\uparrow \infty$.
Putting this together with \eqv(mart.6), the convergence claimed in \eqv(mart.1) follows.

To show \eqv(mart.1) we note that by \eqv(mart.8) it is enough to show that
\bea\Eq(mart.9)
&&\sum_{k=1}^{n(t^\b)}\1_{x_k(t^\b)-\sqrt{2}t^\b\not\in[-At^\d,-Bt^\gamma]} (\sqrt{2}t^\b-x_k(t^\b))\eee^{-\sqrt{2}\sfrac{\s_1}{\s_2}(\sqrt{2}t^\b-x_k(t^\b))}\rightarrow 0,
\eea
 in probability, as $t\uparrow\infty$.
Note that by the same upper and lower bounds as in \eqv(mart.6) and \eqv(mart.6.1),
to prove   \eqv(mart.9) and \eqv(mart.100), it is enough to show that
\bea\Eq(mart.101)
&&\sum_{k=1}^{n(t^\b)}\1_{x_k(t^\b)-\sqrt{2}t^\b\not\in[-t^\d,-t^\gamma]} (\sqrt{2}t^\b-x_k(t^\b))\eee^{-\sqrt{2} (\sqrt{2}t^\b-x_k(t^\b))} \nonumber\\
&&=\sum_{k=1}^{n(t^\b)}\left(\1_{x_k(t^\b)-\sqrt{2}t^\b<-t^\gamma} +\1_{x_k(t^\b)-\sqrt{2}t^\b>-t^\d }\right)(\sqrt{2}t^\b-x_k(t^\b))\eee^{-\sqrt{2} (\sqrt{2}t^\b-x_k(t^\b))}
 \nonumber\\
&&\equiv (I)+(II)
\eea
converges to zero in probability as $t\uparrow \infty$.
Following the computation in \eqv(mart.8), we get, using the many-to-lemma, that the expectation of (I) in \eqv(mart.101) is equal to
\bea\Eq(mart.105)
&&\eee^{t^\beta}\int^{\sqrt{2}t^\beta-t^{\b\g}}_{-\infty}\frac{dx}{\sqrt{2\pi t^\b}}\eee^{-\frac{x^2}{2t^\b}} (\sqrt{2}t^\b-x)\eee^{-\sqrt{2}(\sqrt{2}t^\b-x)}
\nonumber\\
&&\qquad=\int^{\infty}_{t^{\b\g}}\frac{dy}{\sqrt{2\pi t^\b}}y\eee^{-\frac{y^2}{2t^\b}} 
=\frac{t^{\b/2}}{\sqrt {2\pi}}\eee^{-t^{\b(2\g-1)}/2}.
\eea
 As $\gamma>1/2$ the \eqv(mart.105) converges to zero as $t\uparrow \infty$.
 For (II) in \eqv(mart.101), we have that, for $r$ large enough,   
\be\Eq(mart.103)
\P\left((II)>\e\right)\leq \P\left(\{(II)>\e\} \land \{\forall_{k\leq n(t^\b)}x_k\in\TT_{r,t^\b}\}\right) +\epsilon.
\ee
Using again the Markov inequality and the   many-to-one lemma, 
we obtain the bound
 \bea\Eq(mart.102)
 && \P\left(\{(II)>\e\} \land\{ \forall_{k\leq n(t^\b)}x_k\in\TT_{r,t^\b}\}\right)\nonumber\\
&&\leq\E\Biggl[\sum_{k=1}^{n(t^\b)}\1_{x_k(t^\b)-\sqrt{2}t^\b>-t^\d}\1_{\forall_{r\leq s\leq t^\b} x_k(s)\leq \sqrt{2}s} (\sqrt{2}t^\b-x_k(t^\b))\eee^{-\sqrt{2} (\sqrt{2}t^\b-x_k(t^\b))} \Biggr]\nonumber\\
&&= \eee^{t^\beta}\int^{\sqrt{2}t^\beta}_{\sqrt{2}t^\beta-t^{\b\d}}\frac{dx}{\sqrt{2\pi t^\b}}\eee^{-\frac{x^2}{2t^\b}} (\sqrt{2}t^\b-x)\eee^{-\sqrt{2}(\sqrt{2}t^\b-x)}\sqrt{\frac{2}{\pi}}\frac{(\sqrt{2}t^\b-x)\sqrt{r}}{t^\b},
\eea
using that the Brownian bridge is independent from its endpoint and \eqv(p.4), with $t/2$ replaced by $t^\beta$.
The integral in \eqv(mart.102) is computed as in \eqv(mart.105) and 
we see that \eqv(mart.102) is equal to
\be\Eq(mart.104) 
 \sqrt{\frac{2r}{\pi}}\int^{t^{\b(\d-1/2)}}_0 \frac{dy}{\sqrt{2\pi}}y^2\eee^{-y^2},
\ee
which converges,  for any $r$ fixed, to zero as $t\uparrow \infty$, since  $\d<1/2$. Putting the estimates in \eqv(mart.105) and \eqv(mart.104) together, we  obtain that \eqv(mart.101) converges to zero in probability as $t\uparrow \infty$. This concludes the proof of Lemma \thv(almost.deriv).
\end{proof}
\subsection{Conclusion of the proof}
Using Lemma \thv(almost.deriv) we see that indeed the right-hand side of \eqv(prep.22) converges, as first $t\uparrow\infty$ and then $A\downarrow0$ and $B\uparrow\infty$, 
in probability to 
\be\Eq(prep.23.1)
\CCC Z\frac{\eee^{-\sqrt 2y}}{\sqrt {2\pi }}.
\ee
Together with the fact that the term in \eqv(prep.21) converges to zero, we get that 
\eqv(prep.10) converges to 
\be\Eq(almost.1)
\E \left[\eee^{-\frac {2^{3/2}}{\sqrt{2\pi} }CZ\eee^{-\sqrt 2 y}}\right],
\ee 
which implies \eqv(M.Th2) and proves Theorem \thv(main.1) in the case when $\s_1^2=1+t^{-\a}$.

 \section{The law of the maximum: The case $\s_1^2=1-t^{-\a}$}

We now consider the case when $\s_1^2=1-t^{-\a}$ and $\s_2^2=1+t^{-\a}$. In this case, we have 
\be\Eq(below.1)
m(t)=\sqrt 2 t -\frac {3-\g}{2\sqrt 2}\ln t, 
\ee
where $\g=2-4\a$, as long as $\a\leq 1/2$.
The aim of this section is to prove that
\begin{theorem}\TH(B.Th1)
 Let $m(t)$ be as in \eqv(below.1). Then 
\be\Eq(B.Th2)
\lim_{t\to\infty}\P\left[\max_{1\leq k \leq n(t)}\tilde x_k(t)-m(t)\leq y\right]= \E\left[\exp\left(-CZ\eee^{-\sqrt{2}y}\right)\right].
\ee
$Z$ is  the limit of the derivative  martingale 
and $C$ is a positive constant.
\end{theorem}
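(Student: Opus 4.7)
The plan is to mirror the six-subsection strategy of Section 3, adapting each step to the new geometry of the extremal paths. The first and crucial input is a localisation result: in this regime the ancestors at time $t/2$ of particles within $O(1)$ of $m(t)$ do not sit in a window of size $O(t^\alpha)$ just below $\sqrt 2 t/2$, but rather in a window of width $O(\sqrt t)$ centered at $\sqrt 2 t/2-\sqrt 2 t^{1-\a}/4$. Thus, in analogy with Propositions \thv(P.Prop0), \thv(P.Prop1) and \thv(P.Prop2), I would introduce localisation events
\be
\GG^-_{t/2,A,B}=\{X\,|\,X(t/2)-\sqrt 2 t/2+\sqrt 2 t^{1-\a}/4\in[-A\sqrt t,-B\sqrt t]\},\qquad \TT_{r,t/2},\qquad \HH_\d,
\ee
and show, by a first-moment/many-to-one computation identical in spirit to the proof of Proposition \thv(P.Prop1), that the probability of having an extremal particle whose ancestor violates $\GG^-_{t/2,A,B}\cap\TT_{r,t/2}\cap\HH_\d$ is $\le\e$ for $A$ large, $B$ small, $\d<1/2$. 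The optimal centering $\sqrt 2 t^{1-\a}/4$ appears as the saddle point of $-z^2/t+\sqrt 2\,\s_1/\s_2\,z$ combined with the tail $\s_2^{-1}m(t)-\s_1 z-\sqrt 2 t/2$ of the second-phase maximum, and the width $\sqrt t$ is the natural Gaussian scale around that saddle.

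Once localisation is in place, Lemma \thv(minimax.1) allows the event $\LL^-_{t^\b,t/2,A,B}=\GG^-_{t/2,A,B}\cap\HH_\d\cap\TT_{t^\b,t/2}$ to be inserted into $\P(\max \tilde x_k(t)\le m(t)+y)$. The branching property at $t/2$ produces a product over localised first-phase particles of $1-\P[\max_{\ell}x^k_\ell(t/2)>\G_k(t)|\FF_{t/2}]$ with
\be
\G_k(t)=\tfrac{\s_1}{\s_2}\!\left(\sqrt 2 \tfrac{t}{2}-x_k(t/2)\right)-\tfrac{3-\g}{2\sqrt 2\,\s_2}\ln t+y/\s_2,
\ee
and on $\GG^-_{t/2,A,B}$ we have $\G_k(t)\gg 1$ so Proposition \thv(A.Prop1) (tail of the standard-BBM maximum) applies uniformly. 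Conditioning further on $\FF_{t^\b}$, the inner expectation becomes, as in \eqv(prep.12), the Laplace transform at the derivative- and McKean-type sums $\YY_k,\ZZ_k$.

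The main computation is then the analogue of Lemma \thv(expectations.1). Here the centering $\sqrt 2 t^{1-\a}/4$ is non-trivial: after a many-to-one reduction the relevant integral is
\be
t^{(3-\g)/2}\eee^{\tt}\!\!\int_{-A\sqrt t}^{-B\sqrt t}\!\!\tfrac{(\sqrt 2 t^\b-x_k(t^\b))(\sqrt 2 t/2-\sqrt 2 t^{1-\a}/4-z)}{\tt}\tfrac{\eee^{-(z+\sqrt 2 t^{1-\a}/4)^2/2\tt}}{\sqrt{2\pi\tt}}\eee^{\sqrt 2(\s_1/\s_2)(z+\sqrt 2 t^{1-\a}/4-\sqrt 2\tt)}\,dz,
\ee
and the Gaussian-type integral, with $\s_1/\s_2=1-t^{-\a}+o(t^{-\a})$, reduces after the change of variables $z=u\sqrt t$ to $\int_B^A u\,\eee^{-u^2/2}du$ for $\E[\ZZ_k|\FF_{t^\b}]$ and $u^2\eee^{-u^2/2}$ for the $\YY_k$ part, each multiplied by $(\sqrt 2 t^\b-x_k(t^\b))$ and by explicit powers of $t$ arranged so that the $\ZZ_k$ prefactor gives a finite limit $C\,\eee^{-\sqrt 2 y}$ (with $C$ as in Proposition \thv(A.Prop1)), while the $\YY_k$ prefactor vanishes like $t^{-\a}$. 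Control of the second moments via many-to-two, completely parallel to Subsection 3.4, shows the quadratic error is negligible; applying $1-x\le\eee^{-x}\le 1-x+x^2/2$ and invoking Lemma \thv(almost.deriv) to identify $\sum_k(\sqrt 2 t^\b-x_k(t^\b))\eee^{-\sqrt 2 (\sqrt 2 t^\b-x_k(t^\b))}\to Z$ then yields $\E[\eee^{-CZ\eee^{-\sqrt 2 y}}]$, with no $\sqrt{2/\pi}$ prefactor because the Gaussian density at the centering is now $O(1/\sqrt t)$ rather than $O(1/\sqrt t)$ evaluated at $0$ followed by another Gaussian factor.

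The main obstacle is the localisation proposition: unlike the $\s_1>1$ case, the optimal $t/2$ position is not simply determined by a logarithmic correction but by an actual saddle point whose width $\sqrt t$ must be tracked uniformly. Getting the constant $C$ (and not $2C/\sqrt\pi$ or some other variant) requires carefully balancing the Brownian-bridge factor $\propto(\sqrt 2 t^\b-x_k(t^\b))(\sqrt 2 t/2-x_k(t/2))/\tt$, the Gaussian density at $-\sqrt 2 t^{1-\a}/4$, and the tail constant from Proposition \thv(A.Prop1), and verifying that the two extra $\sqrt t$ Gaussian factors combine to exactly cancel the $1/\sqrt{2\pi}$ that would otherwise be present, leaving only the constant $C$ in \eqv(B.Th2).
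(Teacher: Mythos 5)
Your overall skeleton is the right one and mirrors the paper's: localise the time-$t/2$ ancestor in a $\sqrt t$-wide window around $\sqrt 2\s_1 t/2\approx \sqrt 2 t/2-\sqrt 2 t^{1-\a}/4$, use Lemma \thv(minimax.1) to insert the localisation, apply the Bramson tail with Gaussian correction, split at $t^\b$, control second moments, and close via Lemma \thv(almost.deriv). But there is a genuine structural error in the middle of the argument.

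You import the $\YY_k/\ZZ_k$ dichotomy from the $\s_1>1$ case and claim that after conditioning on $\FF_{t^\b}$ one gets two sums, with ``the $\ZZ_k$ prefactor giving a finite limit'' and ``the $\YY_k$ prefactor vanishing like $t^{-\a}$,'' the derivative martingale emerging from the linear factor inside $\ZZ_k$. This is not what happens here, and the paper explicitly flags the difference (see the remark after \eqv(B.prep.13): ``Note that this time, there is no term involving $\ZZ_k$!''). In the $\s_1<1$ case, $\D_k(t)=\sqrt 2 t^{1-\a}/4+O(\sqrt t)$ by \eqv(rough-delta.1), and since $\a<1/2$ the leading term $\sqrt 2 t^{1-\a}/4$ dominates the $O(\sqrt t)$ fluctuation, so the prefactor $\D_k(t)$ in $\CCC\D_k\eee^{-\sqrt 2\D_k-\D_k^2/t}$ is \emph{deterministic to leading order} and factors out of the sum over $j$. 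Consequently only a single $\YY_k$-type sum appears, and the linear factor $(\sqrt 2 t^\b-x_k(t^\b))$ that regenerates the derivative martingale comes entirely from the Brownian-bridge probability $\sim 2(\sqrt 2 t^\b-x_k(t^\b))(\sqrt 2 t/2-x_j^k(t^*))/t^*\sim\sqrt 2(\sqrt 2 t^\b-x_k(t^\b))t^{-\a}$ in \eqv(simpler.1), not from a $\ZZ_k$-like sum. Correspondingly the inner integral reduces (after shifting by $\sqrt 2\s_1 t^*$ and changing variables $z=u\sqrt t$) to a plain Gaussian $\int_{-A}^A\eee^{-u^2/\s_2^2}du$, not the $u\eee^{-u^2/2}$ and $u^2\eee^{-u^2/2}$ you write; the quadratic factor never arises because the prefactor was already deterministic. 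This is why the constant in Theorem \thv(B.Th1) comes out as $C$ rather than $\CCC/\sqrt{2\pi}$: the mechanism producing it is different from Section 3, not a cancellation of ``two extra $\sqrt t$ Gaussian factors.''

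A secondary issue: your localisation event $\GG^-_{t/2,A,B}$ restricts the ancestor to the one-sided window $\sqrt 2\s_1 t/2+[-A\sqrt t,-B\sqrt t]$. At this Gaussian saddle the fluctuations are symmetric, and the paper's localisation \eqv(B.11) correctly uses the two-sided window $[-A\sqrt t,A\sqrt t]$ (and in \eqv(b.p.7) the residual integral is over $|y|>A$). With your one-sided window you would discard a constant fraction of the extremal particles and the final constant would be off by a factor.

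Finally, your saddle-point heuristic for the centering is in the right spirit, but as written ($-z^2/t+\sqrt 2\,\s_1/\s_2\,z$ plus the tail term) does not by itself yield $\sqrt 2\s_1 t/2$; the $\G_k(t)^2/t$ correction from Proposition \thv(A.Prop1) must be kept, as the paper does in \eqv(b.6)--\eqv(sss.1), and the center is then simply $\sqrt 2\s_1 t/2$.
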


The structure of the proof is identical to that in the previous section. 

\subsection{Localisation of paths}

To prove Theorem \thv(B.Th1) we need control the position of particles until time $t/2$. 
Only the position of the path at time $t/2$ needs to be modified from the previous section, i.e we redefine 
\be\Eq(B.11)
\GG_{s,A}=\bigl\{X\big \vert X(s)-\sqrt{2}s\s_1\in  [-A  s^{1/2},As^{1/2}]
\bigr\}.
\ee

\begin{proposition}\TH(B.Prop1) Let $\s_1^2=1-t^{-\a}, \s_2^2=1+t^{-\a}$ with $\a\in(0,1/2)$.
 For any $d\in \R$ and 
any  $\e>0$,  there exists a constant $A>0$ such that, for all $t$ large enough,
\be\Eq(b.p.1)
\P\left[\exists_{j\leq n(t)}: \{\tilde x_j(t)>m(t)-d\} \land \{\s_1^{-1}\tilde   x_j(t/2)
\not\in \GG_{t/2,A}\}
\right]\leq \e.
\ee
\end{proposition}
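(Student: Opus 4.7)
The plan is to mirror the structure of the proof of Proposition \thv(P.Prop1), adapted to the case $\s_1<1$. Abbreviate $s=t/2$ and let $W=[\sqrt{2}s\s_1-A\sqrt{s},\,\sqrt{2}s\s_1+A\sqrt{s}]$. First, exactly as in Proposition \thv(P.Prop1), the analogue of Proposition \thv(P.Prop0) (which depends only on the standard BBM frozen at time $s$) allows me to insert at negligible cost the barrier condition $\s_1^{-1}\tilde x_j\in\TT_{r,s}$ for some large $r$. Markov's inequality and the branching property at $s$ then bound the probability by the expected number of pairs $(k,\ell)$ with $\s_1 x_k(s)+\s_2 x_\ell^k(s)>m(t)-d$, $x_k$ barrier-abiding, and $x_k(s)\notin W$. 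The many-to-one lemma yields the bound
\be
e^{s}\int_{z\leq \sqrt{2}s,\,z\notin W}\frac{e^{-z^2/t}}{\sqrt{\pi t}}\,\P\bigl(\zet^s_{0,\sqrt{2}s-z}(u)\leq 0,\,\forall u\in[r,s]\bigr)\,\P\Bigl(\max_{\ell\leq n(s)}x_\ell(s)>\Gamma(z)\Bigr)dz,
\ee
with $\Gamma(z)=(m(t)-d-\s_1 z)/\s_2$.

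The next step is to plug in the bridge estimate $\sqrt{2/\pi}\,\sqrt{r}(\sqrt{2}s-z)/s$ (valid when $\sqrt{2}s-z\ll\sqrt{s}$, bounded by $1$ otherwise) and the tail asymptotic of Proposition \thv(A.Prop1), namely $\P(\max>\Gamma(z))\sim Cye^{-\sqrt{2}y-y^2/(2s)}$ with $y=\Gamma(z)-\sqrt{2}s+\tfrac{3}{2\sqrt{2}}\ln s$. Changing variables to $w=\sqrt{2}s-z$ and expanding $\s_1,\s_2,\s_1/\s_2$ consistently in powers of $t^{-\a}$ (in particular using $2-\s_1-\s_2=t^{-2\a}/4+O(t^{-4\a})$ and $\s_1/\s_2=1-t^{-\a}+O(t^{-2\a})$), one checks that the total exponent $s-z^2/(2s)-\sqrt{2}y-y^2/(2s)$ reduces to
\be
\sqrt{2}\,t^{-\a}\,w-\frac{w^2}{s}-\frac{t^{1-2\a}}{4}-(1-2\a)\ln t+\sqrt{2}d+O(t^{1-3\a}).
\ee
The quadratic form in $w$ is maximised at $w^{*}=\sqrt{2}t^{1-\a}/4$, corresponding to $z=\sqrt{2}s\s_1$ up to higher-order corrections; its maximum value $+t^{1-2\a}/4$ exactly cancels the constant $-t^{1-2\a}/4$. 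What survives is a Gaussian in $w-w^{*}$ with variance of order $s$, multiplied by polynomial prefactors (from $y$, the bridge factor, and the Gaussian density) of total order $t^{1-2\a}$; these combine with $e^{-(1-2\a)\ln t}=t^{2\a-1}$ to give an $O(1)e^{\sqrt{2}d}$ integral at the saddle.

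To show that the contribution from $|w-w^{*}|>A\sqrt{s}$ is bounded by $\e$ for $A$ large, I split the integration into three regions. For $A\sqrt{s}<|w-w^{*}|\leq K\sqrt{s}$ with $K$ large, the Gaussian tail contributes $O(e^{-cA^2})$, arbitrarily small. For $w>w^{*}+K\sqrt{s}$ (i.e.\ $z$ well below $\sqrt{2}s\s_1$), the Gaussian decay controls things until the asymptotics of Proposition \thv(A.Prop1) break down; beyond that I switch to the crude bound of Lemma \thv(probab.2). For $w<w^{*}-K\sqrt{s}$ (i.e.\ $z$ close to $\sqrt{2}s$), the bridge estimate is no longer sharp and I bound it by $1$; however $\Gamma(z)$ is then much larger than $\sqrt{2}s$, and Lemma \thv(probab.2) yields a Gaussian decay of the maximum probability that more than compensates.

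The main obstacle is the algebraic bookkeeping. Because $\s_1,\s_2=1\pm O(t^{-\a})$ differ from $1$ only by small amounts, the $e^s$ factor from the many-to-one lemma combined with the Gaussian density produces the enormous factor $e^{st^{-\a}}=e^{t^{1-\a}/2}$, which must be cancelled exactly by the contribution $-\sqrt{2}w(\s_1/\s_2)$ in $-\sqrt{2}y$ at the saddle $w^{*}\sim t^{1-\a}$. This only works thanks to the specific form of $m(t)$ in \eqv(b.1): the coefficient $(1+4\a)/(2\sqrt{2})$ in front of $\ln t$ is precisely what is needed to match the $\tfrac{3}{2\sqrt{2}}\ln s$ shift from Proposition \thv(A.Prop1) and turn the subleading $-(1-2\a)\ln t$ into an integrable polynomial prefactor. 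Carrying the expansions of $\s_1,\s_2$ consistently to order $t^{-2\a}$ is essential to see all cancellations emerge correctly and to pin down the centre of the window at $\sqrt{2}s\s_1$ rather than at some nearby position.
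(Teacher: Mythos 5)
Your overall plan is the same as the paper's: insert the barrier, apply the many-to-one lemma, use the Gaussian bridge estimate plus Proposition \thv(A.Prop1), and show the contribution from $z$ outside a width-$\sqrt t$ window around $\sqrt2 s\s_1$ is small. However, the explicit exponent you display is not accurate enough to make the argument close. Taking the parametrization $w=\sqrt2 s - z$, the exact centre of the Gaussian is $w_c=\sqrt2 s(1-\s_1)=\sqrt2\,t^{1-\a}/4 + \sqrt2\,t^{1-2\a}/16 + O(t^{1-3\a})$, not $\sqrt2\,t^{1-\a}/4$. When $\a<1/4$ the neglected shift $\sim t^{1-2\a}$ exceeds the window width $\sqrt t$, so identifying the saddle as $w^{*}=\sqrt2\,t^{1-\a}/4$ places your integration window in the wrong place and the claimed cancellation of the $t^{1-2\a}/4$ terms no longer holds. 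Relatedly, the $O(t^{1-3\a})$ error in your exponent is not innocuous: for $\a<1/3$ it diverges, so the "exponent reduces to $\dots+O(t^{1-3\a})$" does not establish that the exponential is bounded. Working to "order $t^{-2\a}$" as you suggest in the last paragraph would still leave a $t^{1-4\a}$ residual for $\a<1/4$.

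The paper sidesteps all of this by substituting $z=\sqrt2\s_1 t/2 - y$ directly, i.e. centering at the \emph{exact} value $\sqrt2\s_1 t/2$ rather than its leading-order approximation, and then keeping the $\s_i$ as exact algebraic quantities rather than Taylor expanding. In that variable both the linear-in-$y$ term and the giant constant vanish \emph{identically}: the linear coefficient is proportional to $1-1/\s_2-(\s_2-1)/\s_2=0$, and the constant is proportional to $(1-\s_1^2)-(\s_2^2-1)=2-\s_1^2-\s_2^2=0$, both exactly zero by the defining relations $\s_1^2=1-t^{-\a}$, $\s_2^2=1+t^{-\a}$. No $O(t^{1-k\a})$ error terms then arise at all; what remains is a clean Gaussian in $y$ of width $\sqrt t$, a $\ln t$ contribution, and the $e^{\sqrt2 d}$ factor. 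Your final paragraph correctly senses that the centre must be $\sqrt2 s\s_1$ "rather than at some nearby position," but the displayed computation does not realize this: to fix the proof you should redo the exponent bookkeeping in the shifted variable $y$ without any Taylor expansion of $\s_1,\s_2$, exploiting the two identities above to get exact cancellation instead of approximate cancellation with a potentially divergent remainder.
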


\begin{proof} 
Note that $\s_1\sqrt 2t/2 =\sqrt 2 t/2- \sqrt 2 t^{1-\a}/4 +O(t^{1-2\a})$.
Abbreviate 
\be
I\equiv [\sqrt 2t/2-\t-A\sqrt t,\sqrt 2t/2-\t+A\sqrt t],
\ee
 with 
 $\t=\sqrt 2(1-\s_1)t/2$. Note that 
 $ \t= \sqrt 2t^{1-\a}/4 +O(t^{1-2\a})\gg\sqrt t$.  The probability in question can be written in the form
\be\Eq(b.2)
\P\left(\exists_{k\leq n(t/2)}  : \{\s_1x_k(t/2)>  m(t)-\s_2\max_{\ell\leq n^k(t/2)}x_\ell^k-d\} \land 
\{x_k(t/2) \not \in  I\}\right).
\ee
We can also insert the condition that particles stay below the line $\sqrt 2 s$ for all time at no cost. 
Then the 
expression in \eqv(b.2) becomes
\bea\Eq(b.2.1)\nonumber
&&\P\Bigl(\exists {k\leq n(t/2)}  :  \{\s_2\max_{\ell\leq n^k(t/2)}x_\ell^k >  m(t)- \s_1x_k(t/2)-d \}\land
\{x_k(t/2)\not \in I\} \\
&&\quad\quad\quad\land \{ x_k(s)\leq \sqrt 2s, \forall s\in [r,t/2]\}
\Bigr),
\eea
(where it is understood that $r\uparrow\infty$ after $t\uparrow\infty$). 
By the many-to-one lemma, 
this is bounded from above by
\bea\Eq(b.3)
    &&\eee^{t/2}\E\left[ \1_{\s_1x_1(t/2)\not\in I}
    \1_{x_k(s)\leq \sqrt 2s, \forall s\in [r,t/2]}
    \1_{ \max_{\ell\leq n^k(t/2)} \s_2x_\ell^k(t/2) >  m(t)- \s_1x_k(t/2)-d }\right]\\\nonumber
    &&=\eee^{t/2}\int_{I^c} \frac{  \eee^{-\frac{z^2}{t}}}{\sqrt {\pi t}}
    \P\left(\zet^{t/2}_{0,\sqrt 2 t/2-z}(s)\leq 0, \forall_{s\in (r,t/2]}\right)
    \P\left( \s_2\max_{\ell\leq n^k(t/2)}x_\ell^k(t/2) >  m(t)- \s_1z-d \right)dz,
    \eea
where $\zet^{t/2}_{0, y}$ denotes the Brownian bridge from $0$ to $y$ in time $t/2$
and we wrote $I^c$ short for $ I^c\cap (-\infty,\sqrt 2t/2]$.
The probability regarding the Brownian bridge satisfies, since $\t\gg \sqrt t$, 
\be\Eq(b.p.4)
    \P\left(\zet^{t/2}_{0,\sqrt 2 t/2-z}(s)\leq 0, \forall_{s\in (r,t/2]}\right)
    \leq  \frac {\sqrt r}  {\sqrt \pi t^\a}.
    \ee
    We now write the integral as (set $J_A=(-A\sqrt t,A\sqrt t)$),
 \bea\Eq(b.p.5)\nonumber
          &&\eee^{t/2}\int_{J_A^c} \frac{  \eee^{-\frac{(\sqrt2 \s_1t/2-y)^2}{t}}}{\sqrt {\pi t}}
   \P\left(\zet^{t/2}_{0,-y}(s)\leq 0, \forall_{s\in (r,t/2]}\right)\\
    &&\quad\quad\times
    \P\left(  \s_2\max_{\ell\leq n^k(t/2)}x_\ell^k(t/2) >  m(t)-\s_1^2\sqrt 2t/2 +\s_1 y-d \right)dy.
    \eea
    A simple calculation shows that 
    \bea\Eq(mmm.1)
    \frac{m(t)- \s_1^2\sqrt 2 t/2}{\s_2} &=& \sqrt 2t/(2\s_2) +\sqrt 2(\s_2 -\s_2^{-1})t/2  -\frac {3-\g}{2\sqrt 2\s_2} \ln t
    \nonumber\\&=& \sqrt 2t/2   +\sqrt 2 t(\s_2-1)/2-\frac {3-\g}{2\sqrt 2\s_2} \ln t.
    \eea
    Hence the probability involving the maximum in \eqv(b.p.5) reads
    \be\Eq(b.p.6)
    \P\left( \max_{\ell\leq n^k(t/2)}x_\ell^k(t/2)-\sqrt 2t/2 > 
    \sqrt 2 t(\s_2-1)/2-\frac {3-\g}{2\sqrt 2\s_2} \ln t+\frac{\s_1}{\s_2}y-d/\s_2 \right)dy.
    \ee
    Using Proposition  \thv(A.Prop1), respectively \eqv(probab.1),
    we see that this probability equals, asymptotically as $t\uparrow\infty$, to 
    \bea\Eq(b.6)
   && Ct^{-3/2}\left( \sqrt 2 t(\s_2-1)/2-\frac {3-\g}{2\sqrt 2\s_2} \ln t+\frac{\s_1}{\s_2}y-d\right)
    \eee^{-\sqrt 2\left(    \sqrt 2 t(\s_2-1)/2-\frac {3-\g}{2\sqrt 2\s_2} \ln t+\frac{\s_1}{\s_2}y-d \right)}\nonumber\\&&\quad\times
   \eee^{-\left(\sqrt 2 t(\s_2-1)/2-\frac {3-\g}{2\sqrt 2\s_2} \ln t+\frac{\s_1}{\s_2}y-d \right)^2/t}.
   \eea
    The terms in the last exponential can be written as 
    \be\Eq(sss.1)
    (\s_2-1)^2t/2 +\sqrt 2\frac {\s_1}{\s_2}(\s_2-1) y +\frac{\s_1^2}{\s_2^2}y^2/t +o(1).
    \ee
    
       Inserting this and the bound \eqv(b.p.4) into \eqv(b.p.5), we see that this term is not larger than 
       \be\Eq(b.p.7)
     \int_{|y|>A\sqrt t}
  \frac{  \eee^{- \left(1+{\s_1^2}/{\s_2^2}\right)y^2/t}}{\sqrt {\pi t}}
   \frac {\sqrt r} {\sqrt {\pi }t^\a}
    C  \t t^{-\g/2}
  \eee^{\sqrt 2 d}
 dy,
    \ee
   Recalling that $\g=2-4\a$, this becomes
         \be\Eq(b.p.7)
   C    \frac {\sqrt r} {\sqrt {\pi }}
  \eee^{\sqrt 2 d} \int_{|y|>A}
  \frac{  \eee^{- \left(1+{\s_1^2}/{\s_2^2}\right)y^2}}{\sqrt {\pi }}
 dy,
    \ee
  For any finite $r$, this tends to zero, as $A\uparrow \infty$. 
   This concludes the proof of the proposition.
   \end{proof}

\subsection{Recursive structure}
As in the previous section,  and with the same notation, we write
\be\Eq(B.prep.1)
 \P\left[\max_{1\leq k \leq n(t)}\tilde x_k(t)-m(t)> y\right]= \P\left[\max_{ k \leq n(t/2), l\leq n^k(t/2)}  \sigma_1 x_k(t/2)+\sigma_2 x^k_l(t/2)-m(t)> y\right].
\ee
We again need to define  shifted versions of the event $\GG$ and $\TT$ by
\be\Eq(B.11.2)
\GG_{s,A,S,T}=\bigl\{X\big \vert X(s)-\sqrt{2}s\s_1+S\in  [-A  (s+T)^{1/2},A(s+T)^{1/2}]
\ee
 By Propositions \thv(B.Prop1), \thv(P.Prop0), and \thv(P.Prop2), we have that
\bea\Eq(B.prep.5)
&&\P\left[\max_{1\leq k \leq n(t)}\tilde x_k(t)-m(t)> y\right]\\\nonumber&&
=\P\left[\exists_{ k \leq n(t/2), l\leq n^k(t/2)} :\{ \sigma_1 x_k(t/2)+\sigma_2 x^k_l(t/2)-m(t)> y\}
\land \{x_k\in 
\LL_{t^\b,t/2,A}\}\right]+O(\e).
\eea
where 
\be\Eq(B.prep.6)
\LL_{t^\b,t/2,A}=\GG_{t/2,A}\cap\HH_{\d}\cap \TT_{t^{\b},t/2}.
\ee
In view of Lemma \thv(minimax.1), it is enough to analyse the probability in the second line of 
\eqv(B.prep.5), which, 
as in Eq. \eqv(prep.2),  can be written as
\be\Eq(B.prep.2)
1-\E\Biggl[\prod_{\stackrel{k\leq n(t/2)}{x_k\in \LL_{t^\b,t/2,A}}}\left(1-\P\left[\max_{l\leq n^k(t/2)}x^k_l(t/2)> \frac{m(t)+y-\sigma_1 x_k(t/2)}{\sigma_2}\big\vert \FF_{t/2}\right]\right)\Biggr].
\ee
Since  $x_k\in \GG_{t/2,A}$, we can use the tail asymptotics given in Proposition \thv(A.Prop1) to control the 
conditional probability in \eqv(B.prep.2)\footnote{Note that this is true if $\a>0$. Otherwise, we cannot use 
the tail asymptotics and thus in the case $\s_1^2-1=O(1)$, the behaviour changes completely, see \cite{BH14.1}.}.
 Namely,
\be\Eq(B.prep.7)
 \P\left[\max_{l\leq n^k(t/2)}x^k_l(t/2)> \frac{m(t)+y-\sigma_1 x_k(t/2)}{\sigma_2}\big\vert \FF_{t/2}\right]
 =2^{3/2} C \Gamma_k(t)\eee^{-\sqrt{2}\Gamma_k(t)-\G_k(t)^2/t}  (1+o(1)),
\ee
where the $o(1)$ error term is uniform in the range of possible values for $x_k(t/2)$ as $x_k\in \GG_{t/2,A}$ and 
\be\Eq(B.prep.8)
\Gamma_k(t)=\frac{m(t)+y-\sigma_1 x_k(t/2)}{\sigma_2}-\left(t/\sqrt{2}-\frac{3}{2\sqrt{2}}\log(t)\right).
\ee
Plugging \eqv(B.prep.7) back into \eqv(B.prep.2) we obtain that the expectation in \eqv(B.prep.2) is equal to
\bea\Eq(B.prep.9)
\E\Biggl[\prod_{\stackrel{k\leq n(t/2)}{x_k\in \LL_{t^\b,t/2,A}}} \exp\left( -2^{3/2}C \Gamma_k(t)\eee^{-\sqrt{2}\Gamma_k(t)-\Gamma_k(t)^2/t}\right)\Biggr](1+o(1)),
\eea
 since $\Gamma_k(t)>At^{1/2}$  for $x_k\in \GG_{t/2,A} $. Next, we rewrite the expectation in  \eqv(B.prep.9) by conditioning on $\FF_{t^\beta}$ as
 \be\Eq(B.prep.10)
 \E\Biggl[\prod_{\stackrel{k\leq n(t^\beta)}{x_k\in \HH_{\d}}} 
 \E\Biggl[\prod_{\stackrel{j\leq n^k(t/2-t^\beta)}{x_j^k\in \GG_{t/2-t^\b,A,x_k(t^\beta)-\sqrt{2}t^\beta,t^{\beta}}\cap\TT_{0,t/2-t^\beta,x_k(t^{\beta})-\sqrt{2}t^\beta}}} \hspace{-15mm}
 \exp\left(- C \D_k(t)\eee^{-\sqrt{2}\D_k(t)-\D_k(t)^2/t}\right)\big \vert \FF_{t^\beta}\Biggr]\Biggr],
 \ee
with $\GG_{t/2-t^\b,A,x_k(t^\beta)-\sqrt{2}t^\beta.t^\beta}$  as defined in \eqv(B.11.2) and
\bea\Eq(B.prep.11)
\D_k(t)&=&\frac{m(t)+y-\sigma_1 \left(x_k(t^\b)+x_j^k(\tt)\right)}{\sigma_2}-\left(t/\sqrt{2}-\frac{3}{2\sqrt{2}}\log(t)\right)\\\nonumber
&=&\sqrt 2 t (\s_2-1)/2 +\frac {2-4\a}{2\sqrt 2}\ln (t)
+\frac{\s_1}{\s_2} \left(\sqrt 2t^*\s_1-x_j^k(t^*)+\sqrt 2t^\b\s_1-x_k(t^\b)\right)\\&&\nonumber
+y/\s_2+O(t^{-\a}\ln t)),
\eea
where, for each $k$, $(x^k_j(\cdot))_{l\leq n^k(t/2-t^\b)}$ are particles of an independent standard branching Brownian motion.  
Note also that, taking into account the localisation,  that 
\be\Eq(rough-delta.1)
\D_k(t)=\sqrt 2t(\s_2-1)/2+O(\sqrt t)=\sqrt 2 t^{1-\a}/4+O(\sqrt t).
\ee
Thus, in the prefactor of the exponential, we can replace $\D_k(t)$ simply by $\sqrt 2 t^{1-\a}/4$.
Next, using again localisation, 
\bea\Eq(B.prep.11.1)
\D_k(t)^2/t&=&\frac 12 t(\s_2-1)^2 
+\left(\frac{\s_1}{\s_2} \left(\sqrt 2t^*\s_1-x_j^k(t^*)\right)\right)^2/t
\nonumber\\&&+\frac{\s_1}{\s_2} \left(\sqrt 2t^*\s_1-x_j^k(t^*)\right)\sqrt 2 (\s_2-1)
+o(1).
\eea
Putting both terms together, we get for the terms in the exponent,
\bea\Eq(b.allexp.1)\nonumber
&&-\frac t2 (\s_2^2-1)-\frac{4\a -2}{2}\ln(\ti)+
\sqrt 2\frac{\s_1}{\s_2} \left(\sqrt 2t^*\s_1-x_j^k(t^*)+\sqrt 2t^\b\s_2-x_k(t^\b)\right)
\nonumber\\&&-\left(\frac{\s_1}{\s_2} \left(\sqrt 2t^*\s_1-x_j^k(t^*)\right)\right)^2/t
-\frac{\s_1}{\s_2} \left(\sqrt 2t^*\s_1-x_j^k(t^*)\right)\sqrt 2 (\s_2-1)
\nonumber\\&&-\sqrt 2 y +o(1).
\eea
We set
\be\Eq(B.prep.15)
\wt\LL_{t^\beta,t/2-t^\b,x_k(t^\beta)}\equiv\GG_{t/2-t^\b,A,x_k(t^\beta)-\sqrt{2}t^\beta,t^\beta} \cap\TT_{0,t/2-t^\beta,x_k(t^\b)-\sqrt{2}t^\b}
\ee
We rewrite the inner expectation in \eqv(B.prep.10) as
\bea\Eq(B.prep.12)
&&\E\Biggl[ \exp\Biggl(-\sum_{\stackrel{j\leq n^k(t/2-t^\beta)}{x_j^k\in \wt\LL_{t^\beta,t/2-t^\b,x_k(t^\beta)}}} 
2^{3/2} C \D_k(t)\eee^{-\sqrt{2}\D_k(t)-\D_k(t)^2/t}\Biggr)\big \vert \FF_{t^\beta}\Biggr]\nonumber\\
 &&=
 \E\left[ \exp\left(-   {C}  t^\a
 \eee^{-\frac t2\left(\s_2^2-1\right) -\sqrt 2y}
  \eee^{-\frac{\s_1}{\s_2} 
  \left(\sqrt 2t^\b\s_1-x_k(t^\b)\right)}
 \YY_k(t)\right)
  \big \vert \FF_{t^\beta}\right],
\eea
where
\be\Eq(B.prep.13)
\YY_k(t)=\sum_{\stackrel{j\leq n^k(t/2-t^\beta)}{x_j^k\in \wt\LL_{t^\beta,t/2-t^\b,x_k(t^\beta)}}}
\eee^{-\sqrt{2}\frac{\sigma_1}{\sigma_2} \left(\sqrt{2} \tt\s_1-x^k_j(\tt)\right)}
\eee^{
-\sqrt 2\frac{\s_1}{\s_2} \left(\sqrt 2t^*\s_1-x_j^k(t^*)\right)(\s_2-1)
-\left(\frac{\s_1}{\s_2} \left(\sqrt 2t^*\s_1-x_j^k(t^*)\right)\right)^2/t} .
\ee
Here  we used that $t^{1-\a}t^{-\g/2}=t^\a$. Note that this time, there is no term involving $\ZZ_k$!
As in the case $\s_1>\s_2$, we can effectively replace  in \eqv(B.prep.12)  $\exp(\cdot)$ by $1 +(\cdot)$, 
compute the conditional expectation, and return then to $\exp(\cdot)$. This gives,
\bea\Eq(b.allexp.3)
&& \E\left[ \exp\left(-   {C}  t^\a
 \eee^{-\frac t2\left(\s_2^2-1\right) -\sqrt 2y}
  \eee^{-\frac{\s_1}{\s_2} 
  \left(\sqrt 2t^\b\s_1-x_k(t^\b)\right)}
 \YY_k(t)\right)
  \big \vert \FF_{t^\beta}\right]\nonumber\\
  &&
= \exp\left(-   {C}  t^\a
 \eee^{-\frac t2\left(\s_2^2-1\right) +\sqrt 2y}
  \eee^{-\frac{\s_1}{\s_2} 
  \left(\sqrt 2t^\b\s_1-x_k(t^\b)\right)}
 \E[\YY_k(t)|\FF_{t^\b}]\right)(1+o(1)). 
 \eea
 The proof of \eqv(b.allexp.3) is completely analogous to the corresponding result in the case 
 $\s_1>1$ and will be skipped.

\subsection{Computation of the main term}

We now come to the computation of the averages of $\YY_k(t)$.

\begin{lemma}
\TH(expectations.4)
With the notation from the last subsection, 
\be\Eq(expectations.5)
 \E\left[\YY_k(t)|\FF_{t^\b}\right]=\s_2 (\sqrt {2}t^\b-x_k(t^\b)) t^{-\a} \eee^{(1-\s_1^2)\tt} /\sqrt 2
 \left(1+o(1)\right),\ee
 where $o(1)$ tends to zero as first $t\uparrow \infty$ and then $A\uparrow \infty$.
 \end{lemma}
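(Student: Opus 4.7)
The computation parallels Lemma \thv(expectations.1), adapted to the new definition of $\YY_k$ and the localization window $\GG_{s,A}$ of width $\sqrt s$. The key algebraic input is the identity $\frac{\s_1}{\s_2}\cdot\s_2=\s_1$, which collapses the product of the McKean factor and the $(\s_2-1)$-correction in the definition of $\YY_k$ into a single McKean-type exponent at effective inverse temperature $\s_1$.

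First, I apply the many-to-one lemma to the BBM $(x_j^k)_j$ in displacement coordinates (so the underlying single-particle motion is a standard Brownian motion $B$ from $0$), which reduces the claim to controlling
\[ \eee^{\tt}\,\E\Bigl[\eee^{-\sqrt 2\s_1(\sqrt 2\tt\s_1-B(\tt))-(\s_1/\s_2)^2(\sqrt 2\tt\s_1-B(\tt))^2/t}\,\mathbb{1}_{\wt\LL_{t^\b,\tt,\xi}}\Bigr], \]
with $\xi:=x_k(t^\b)$. Next, I condition on the endpoint $y=B(\tt)$, parametrizing $y=\sqrt 2\tt\s_1+w-z$ with $w:=\sqrt 2 t^\b-\xi$ and $z\in[-A\sqrt{t/2},A\sqrt{t/2}]$ (the range coming from $\GG$). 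The barrier constraint $\TT$ becomes, after tilting by $\sqrt 2 s$, a Brownian bridge from $0$ to $y-\sqrt 2\tt$ staying below $w$; Bachelier's formula gives the corresponding probability $1-\exp[-2w(w-y+\sqrt 2\tt)/\tt]$. Since $w-y+\sqrt 2\tt=\sqrt 2\tt(1-\s_1)+z$ is dominated by $\sqrt 2\tt(1-\s_1)\sim\sqrt 2\,t^{1-\a}/4\gg\sqrt t$ throughout the window (using $\a<1/2$), and the exponent is $O(t^{\b-\a})=o(1)$ for $\b<\a$, the ballot probability linearizes to $2\sqrt 2\,w(1-\s_1)(1+o(1))$.

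The algebraic heart of the argument is to combine the $\eee^{\tt}$ prefactor with the Gaussian density of $B(\tt)$ at $y$ and with the two exponentials in $\YY_k$. Expanding $(\sqrt 2\tt\s_1+w-z)^2/(2\tt)$, the Gaussian density together with $\eee^{\tt}$ contributes $\eee^{\tt(1-\s_1^2)+\sqrt 2\s_1(z-w)-(z-w)^2/(2\tt)}$, while the McKean factor $\eee^{-\sqrt 2\s_1(z-w)}$ cancels the linear $(z-w)$-term exactly. The quadratic correction $-(\s_1/\s_2)^2(z-w)^2/t$ combines with $-(z-w)^2/(2\tt)$ into $-(z-w)^2[1/(2\tt)+(\s_1/\s_2)^2/t]$. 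Because $w=o(\sqrt t)$ from $x_k\in\HH_\d$ with $\d<1/2$, while $z=O(\sqrt t)$, we may replace $(z-w)^2$ by $z^2$ at negligible cost; using the identity $1+(\s_1/\s_2)^2=2/\s_2^2$, the Gaussian integral over the window converges, as first $t\uparrow\infty$ and then $A\uparrow\infty$, to $\s_2/\sqrt 2$.

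Finally, collecting the three pieces --- the uncancelled exponential $\eee^{(1-\s_1^2)\tt}$, the ballot probability (linearized, using $(1-\s_1)=t^{-\a}/(1+\s_1)$ exactly so that the contribution becomes $\sqrt 2\,w\,t^{-\a}/(1+\s_1)$), and the Gaussian integral $\s_2/\sqrt 2$ --- and letting $\s_1\to 1$ in the remaining prefactors, produces the claimed expression $\s_2(\sqrt 2 t^\b-\xi)\,t^{-\a}\,\eee^{(1-\s_1^2)\tt}/\sqrt 2\cdot(1+o(1))$. The principal obstacle is ensuring uniformity of all approximations over the integration window and over the range of $w$ allowed by $\HH_\d$: the ballot linearization requires the $O(t^{\b-\a})$ correction to be negligible (uses $\b<\a$), and the replacement $(z-w)^2\approx z^2$ requires $wz/\tt$ and $w^2/\tt$ both $o(1)$ (uses $\b\d<1/2$ and $\b<1/2$). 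Note that, in contrast to the $\s_1>1$ case, there is no $\ZZ_k$-type contribution here: the exact cancellation of the linear term leaves only a quadratic Gaussian, which is why the conditional expectation is a single clean expression rather than a sum of McKean- and derivative-type pieces.
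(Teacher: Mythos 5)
Your proof is correct and follows essentially the same route as the paper's: many-to-one reduction, linearization of the ballot/bridge probability over the window $\GG_{t/2,A}$, and the algebraic collapse $\frac{\s_1}{\s_2}\cdot\s_2=\s_1$ together with $1+(\s_1/\s_2)^2=2/\s_2^2$, which cancels the linear term and leaves a Gaussian integral evaluating to $\s_2/\sqrt 2$. The only issue is a bookkeeping slip in the final assembly: the ballot factor is $2\sqrt 2\,w(1-\s_1)=2\sqrt 2\,w\,t^{-\a}/(1+\s_1)\to\sqrt 2\,w\,t^{-\a}$ (you dropped a factor of $2$ by writing $\sqrt 2\,w\,t^{-\a}/(1+\s_1)$), so the overall constant works out to $\s_2$, which agrees with the paper's own intermediate computation and differs from the stated $\s_2/\sqrt 2$ (which appears to be a typo in the paper's lemma statement); since this constant is anyway absorbed into $C$ in Theorem \thv(main.1), it does not affect anything downstream.
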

 
 \begin{proof}

 Since the $x_k(t^\b)$ must be of order $t^{\b/2}$ below $\sqrt 2 t^\b$, 
 the bridges involved must go from $x_k(t^\b)$ to its endpoint $x_k(t)$ and stay below $\sqrt 2 s$ all the time. This condition produces a factor \footnote{Note that this holds only if $\a>0$. As soon as $1-\s_1^2 =O(1)$, the bridge 
 condition disappears completely. This is why in that case the McKean martingale appears instead of the derivative martingale.}
 \be\Eq(simpler.1)
2 \frac {(\sqrt {2}t^\b-x_k(t^\b))(\sqrt2t/2-x_k(\tt))}\tt = {\sqrt 2} (\sqrt {2}t^\b-x_k(t^\b))t^{-\a}(1+o(1)).
 \ee
Note that the constraint on the endpoint of $x^k_j(t^*)$ is that 
$x_k(t^\b)+x^k_j(t^*) -\sqrt 2t\s_1/2\in (-A\sqrt t,A\sqrt t)$, but since 
$|x_k(t^\b)|$ is at most of order $t^\b\ll \sqrt t$, this constraint is equivalent to 
$x^k_j(t^*) -\sqrt 2t^*\s_1\in (-A\sqrt t,A\sqrt t)$. 
 Thus 
 \bea
 \Eq(b.mckean.1)
 &&
 \E\left[\YY_k(t)|\FF_{t^\b}\right]= \sqrt 2 \eee^{\tt} (\sqrt {2}t^\b-x_k(t^\b))t^{-\a}\\\nonumber
 &&\times   \int_{\sqrt 2 t^*\s_1 -A\sqrt t}^{\sqrt2 t^*\s_1 +A\sqrt t}
\frac{dz\eee^{-\frac {z^2}{2\tt}}}{\sqrt{2\pi\tt}}
 \eee^{\sqrt 2 \frac{\s_1}{\s_2}(z-\sqrt 2\tt\s_1)}
 \eee^{
-\sqrt2\frac{\s_1}{\s_2} \left(\sqrt 2t^*\s_1-z\right) (\s_2-1)
-\left(\frac{\s_1}{\s_2} \left(\sqrt 2t^*\s_1-z\right)\right)^2/t}.
 \eea
 Shifting the integration variable, the integral in the last expression becomes
 \bea\Eq(b.mckean.2)
&&    \int_{-A\sqrt t}^{A\sqrt t}
\frac{\eee^{-\frac {(z+\sqrt 2\tt\s_1)^2}{2\tt}}}{\sqrt{2\pi\tt}}
 \eee^{\sqrt 2 \frac{\s_1}{\s_2}z}
 \eee^{\sqrt 2
\frac{\s_1}{\s_2} z (\s_2-1)
-\frac{\s_1^2}{\s_2^2} z^2/t}dz\nonumber\\
&&=
\eee^{-\s_1^2 \tt}  \int_{-A\sqrt t}^{A\sqrt t}
\frac{\eee^{-  \frac {z^2}{\s_2^2\tt}}}{\sqrt{2\pi\tt}}
 dz(1+o(1))\to \eee^{-\s_1^2 \tt} \s_2/\sqrt 2, \quad \text{as} \;\;A\uparrow \infty.
 \eea
 This implies \eqv(expectations.5) and concludes the proof of the lemma.
\end{proof}

\subsection{Towards the derivative martingale}

Inserting \eqv(expectations.5) into \eqv(b.allexp.3), 
we see that this now becomes
\be\Eq(b.allexp.4)
 \exp\left(-   
  {C}  \eee^{ -\sqrt 2y}(\sqrt 2t^\b-x_k(t^\b))
  \eee^{-\frac{\s_1}{\s_2} 
  \left(\sqrt 2t^\b\s_1-x_k(t^\b)\right)}
 \right)\left(1+o(1)\right).
 \ee
 Plugging this into \eqv(B.prep.10), this becomes 
 \be\Eq(b.allexp.5)
\E\Biggl[ \exp\Biggl(-\sum_{\stackrel{k\leq n(t^\beta)}{x_k\in \HH_{\d}}}     \sfrac{\s_1}{\s_2}C (\sqrt{2}t^\beta -x_k(t^\beta))\eee^{-\sqrt{2}\sfrac{\s_1}{\s_2}(\sqrt{2}t^\beta -x_k(t^\beta))} \eee^{-\sqrt 2y}\Biggr)\Biggr]
 (1+o(1)).
 \ee
It remains to show that the sum in the exponential converges to the limit of the derivative martingale:

\begin{lemma}\TH(b.allexp.6)
With the notation above, 
\be\Eq(b.mart.2)
\sum_{k=1}^{n(t^\b)}\1_{x_k\in \HH_{\delta} }\sfrac{\s_1}{\s_2}(\sqrt{2}t^\b-x_k(t^\b))\eee^{-\sqrt{2}\sfrac{\s_1}{\s_2}(\sqrt{2}t^\b-x_k(t^\b))}\rightarrow Z,
\ee
in probability, as $t\uparrow \infty$, where $Z$ is the limit of the derivative martingale. 
\end{lemma}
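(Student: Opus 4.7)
The plan is to follow the strategy of Lemma \thv(almost.deriv), adapting it to the present case where now $\s_1/\s_2 < 1$. Observe that since $\s_1^2 = 1 - t^{-\a}$ and $\s_2^2 = 1 + t^{-\a}$, we have
\be\Eq(b.allexp.7)
\frac{\s_1}{\s_2} = 1 - t^{-\a} + O(t^{-2\a}),
\ee
so the correction with respect to the genuine derivative martingale is of order $t^{-\a}$.

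First, I would invoke the Lalley-Sellke theorem \cite{LS} to the effect that
\be\Eq(b.allexp.8)
Z(t^\b) = \sum_{k=1}^{n(t^\b)} (\sqrt 2 t^\b - x_k(t^\b)) \eee^{-\sqrt 2 (\sqrt 2 t^\b - x_k(t^\b))} \xrightarrow{t\uparrow \infty} Z \quad \text{a.s.},
\ee
and show that the difference between \eqv(b.mart.2) and $Z(t^\b)$ tends to zero in probability. To this end, fix $\gamma \in (1/2,1)$ and $\b > 0$ small enough that $\b\gamma < \a$, and split the sum at the cutoff $\sqrt 2 t^\b - x_k(t^\b) = t^{\b\gamma}$. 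On the event $\{\sqrt 2 t^\b - x_k(t^\b) \leq t^{\b\gamma}\}$, the extra factor arising from replacing $1$ by $\s_1/\s_2$ is
\be\Eq(b.allexp.9)
\eee^{-\sqrt 2 (\s_1/\s_2 - 1)(\sqrt 2 t^\b - x_k(t^\b))} = \eee^{O(t^{\b\gamma - \a})} = 1 + o(1),
\ee
uniformly in $k$, so the restricted sum differs from the corresponding restricted sum in \eqv(b.allexp.8) by a factor $1+o(1)$.

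For the complementary event $\{\sqrt 2 t^\b - x_k(t^\b) > t^{\b\gamma}\}$, the contribution to both sums tends to zero in probability: a first-moment estimate via the many-to-one lemma, identical to \eqv(mart.8) in Lemma \thv(almost.deriv), gives
\be\Eq(b.allexp.10)
\E\Biggl[\sum_{k=1}^{n(t^\b)} \1_{x_k(t^\b) \leq \sqrt 2 t^\b - t^{\b\gamma}} (\sqrt 2 t^\b - x_k(t^\b)) \eee^{-\sqrt 2 \sfrac{\s_1}{\s_2}(\sqrt 2 t^\b - x_k(t^\b))}\Biggr] \leq \text{const.}\, \eee^{-\frac{1}{2}t^{\b(2\gamma - 1)}(1+o(1))},
\ee
using that the extra exponential shift from $\s_1/\s_2 < 1$ can be absorbed in the Gaussian by completing the square, exactly as in \eqv(mart.8), at the cost of a factor $\eee^{O(t^{\b - 2\a})}$ which remains subdominant for $\b$ small. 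This drives \eqv(b.allexp.10) to zero.

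Finally, restoring the indicator $\1_{x_k \in \HH_\delta}$ (which requires $x_k(t^\b) \leq \sqrt 2 t^\b - t^{\b\delta}$) only removes particles with $\sqrt 2 t^\b - x_k(t^\b) < t^{\b\delta}$; their total contribution to $Z(t^\b)$ can be made arbitrarily small by the corresponding bound (II) in the proof of Lemma \thv(almost.deriv), using Proposition \thv(P.Prop0) to restrict to paths that stay below $\sqrt 2 s$ and then the Brownian bridge estimate \eqv(p.4). The main delicate point, and the only real obstacle, is to fix parameters $0 < \delta < 1/2 < \gamma < 1$ and $\b > 0$ consistently so that $\b\gamma < \a$ and the $\HH_\delta$ truncation does not interfere with the $t^{\b\gamma}$ cutoff; this is achievable for every $\a \in (0,1/2)$ by taking $\b$ sufficiently small. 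Combining the three ingredients yields \eqv(b.mart.2).
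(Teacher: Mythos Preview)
Your proposal is correct and follows essentially the same approach as the paper, which simply states that the proof is ``completely analogous to that of Lemma \thv(almost.deriv)'' and omits the details. You have correctly identified the one adaptation needed for the case $\s_1/\s_2<1$: since the exponential correction factor is now $\geq 1$ rather than $\leq 1$, the trivial upper bound \eqv(mart.6.1) no longer applies, but your uniform estimate $\eee^{-\sqrt 2(\s_1/\s_2-1)(\sqrt 2t^\b-x_k(t^\b))}=1+o(1)$ on the near part (valid since $\b\g<\a$) handles both directions at once.
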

\begin{proof} The proof of this lemma is completely analogous to that of Lemma \thv(almost.deriv) and will be skipped.
\end{proof}

Form this the proof of Theorem \thv(main.1) follows in the case $\s_1<1$.

\section{The Laplace functional. Proof of Theorem \thv(main.2).}

To control the extremal processes, we need to analyse the Laplace functionals. It will in fact be enough to consider functions $\phi:\R\rightarrow \R_+$ of the form 
\be\Eq(laplace.1)
\phi(x)=\sum_{\ell=1}^Lc_\ell \1_{x\geq u_\ell},
\ee
with $L\in\N$, $c_\ell >0$, and $u_\ell\in \R$ (see \cite {BovHar13, bbm-book}).
We need to compute 
\bea\Eq(laplace.2)
\Psi_t(\phi)&\equiv& \E\left[\eee^{-\int \phi(x)\EE_t(dx)}\right]
\\\nonumber
&=& \E\left[\eee^{-\sum_{k=1}^{n(t)}\phi(\tilde x_k(t)-m(t))}\right]
\\\nonumber
&=& \E\left[\eee^{-\sum_{k=1}^{n(t/2)}\sum_{j=1}^{n^k(t/2)}\phi(\s_1  x_k(t/2)+\s_2 x^k_j(t/2)-m(t))}\right]
\\\nonumber
&=& \E\left[ \prod_{k=1}^{n(t/2)}
\E\left[\eee^{-\sum_{j=1}^{n^k(t/2)}\phi(\s_1  x_k(t/2)+\s_2 x^k_j(t/2)-m(t))}\big|\FF_{t/2}  \right]\right].
\eea
As in the previous chapters, we would like to interpret the conditional expectation in the product as
a solution of the F-KPP equation and use the asymptotics of these solutions. However, there is a small 
problem due to the fact that the $\s_2$ that multiplies $x^k_j(t/2)$ depends on $t$. 
We will see that this problem can be solved rather easily with the help of the maximum principle.

To see this, consider, for fixed $t\in \R$ and $f:\R\rightarrow\R_+$,
\bea\Eq(laplace.3)
&&\E\left[\prod_{j=1}^{n(s)} f\left(\s_1(t)x(t)-\s_2(t)x_j(s)\right)\right]
=\E\left[\prod_{j=1}^{n(s)} f^t\left(\sfrac{\s_1(t)}{\s_2(t)}x(t)-x_j(s)\right)\right]\equiv v^t\left(s, \sfrac{\s_1(t)}{\s_2(t)}x(t)\right),\nonumber\\
\eea
where $f^t(x)=f(x\s_2(t))$. 
Then, for fixed $t$, $1-v^t$ is a solution of the F-KPP equation with initial condition 
$1-v^t(0,x)=1-f^t(x)$.  Provided that $f$ (and $f^t$) satisfies the 
assumptions of Bramson's theorem, we can derive the large-$s$ asymptotics for $v^t$. 
However, we want to look at the asymptotics when $s=t/2$ and $t\uparrow\infty$. Since in our cases, 
$f^t(x)\rightarrow f(x)$, as $t\uparrow\infty$, the initial conditions satisfy Bramson's conditions 
uniformly in $t$ and bounds on $v^t(s,x)$ for large $s$ hold uniformly in $t$. 

Fortunately, the maximum principle allows to overcome this difficulty.

\begin{lemma} 
\TH(laplace.4)
Assume that $f^t$ is such that for all $t>t_0$, and all $x\geq 0$, 
\be\Eq(laplace.5)
f(x)\leq f^t(x)\leq f^{t_0}(x).
\ee
Then, for all $x\geq 0$, and all $t>t_0$, 
\be\Eq(laplace.6)
v(s,x)\leq v^t(s,x)\leq v^{t_0}(s,x).
\ee
In particular, 
\be\Eq(laplace.7)
v(t,x(t))\leq v^t(t,x(t))\leq v^{t_0}(t,x(t)).
\ee
The same holds if all inequalities are reversed.
\end{lemma}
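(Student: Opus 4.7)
The plan is to apply the classical parabolic comparison principle to the F-KPP equation \eqv(fkpp.2). Put $u^t \equiv 1 - v^t$, $u \equiv 1 - v$, and $u^{t_0} \equiv 1 - v^{t_0}$; all three solve
\[
\del_s u = \tfrac{1}{2}\del_x^2 u + F(u)
\]
with initial data $1-f$, $1-f^t$, and $1-f^{t_0}$ respectively. The hypothesis \eqv(laplace.5) then reads $u^{t_0}(0,\cdot) \leq u^t(0,\cdot) \leq u(0,\cdot)$ (the pointwise ordering one needs is on all of $\R$; for the jump functions \eqv(laplace.1) plugged in later this is immediate from $\s_2(t)\to 1$, so the half-line restriction in the statement is inessential).

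The main step is the following standard comparison fact: if $u_1, u_2 : \R_+\times\R \to [0,1]$ are classical solutions of \eqv(fkpp.2) with $u_1(0,\cdot) \leq u_2(0,\cdot)$, then $u_1 \leq u_2$ throughout. To prove it, set $w = u_2 - u_1$, which satisfies the linear parabolic equation
\[
\del_s w = \tfrac{1}{2}\del_x^2 w + c(s,x)\,w, \qquad c(s,x) \equiv \int_0^1 F'\bigl(u_1 + \theta w\bigr)\,d\theta.
\]
Since $u_1, u_2 \in [0,1]$ and $F \in C^1([0,1])$ (this uses the moment condition $K = \sum k(k-1)p_k < \infty$ from the introduction), the coefficient $c$ is bounded. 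The Feynman--Kac representation
\[
w(s,x) = \E_x\Bigl[w(0, B_s)\exp\Bigl(\int_0^s c(s-r, B_r)\,dr\Bigr)\Bigr]
\]
then forces $w \geq 0$ as soon as $w(0,\cdot) \geq 0$.

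Applying this comparison twice chains $u^{t_0} \leq u^t \leq u$ for every $s \geq 0$, which translated back to the $v$'s is precisely \eqv(laplace.6); specialising to $s = t$ and $x = x(t)$ gives \eqv(laplace.7), and the reversed-inequality version of the lemma is obtained by swapping the roles of the three functions in the same argument. I do not anticipate any serious obstacle: the only mildly delicate point is that the spatial domain is unbounded, but the uniform $[0,1]$-bound on all solutions makes the Feynman--Kac representation unambiguous, so no additional growth hypothesis is needed.
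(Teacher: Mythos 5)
Your argument for the comparison principle itself — linearising $w=u_2-u_1$ and invoking Feynman--Kac with the bounded coefficient $c(s,x)=\int_0^1 F'(u_1+\theta w)\,d\theta$ — is correct and is, in spirit, the same route as the paper's one-line reference to the parabolic maximum principle (Bramson's Proposition 3.1). So the technical core is fine. The problem lies in the aside you make before applying it.

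You write that the ordering ``one needs is on all of $\R$'' and that for the step functions \eqv(laplace.1) ``this is immediate from $\s_2(t)\to 1$, so the half-line restriction in the statement is inessential.'' Neither half of that claim holds up. Consider what the $f^t$ actually are in the application: $f^t(z)=f(\s_2(t) z)$ with $f(z)=\eee^{-\phi(-z)}$, and $\phi(z)=\sum_\ell c_\ell\1_{z\geq u_\ell}$ is nondecreasing, so $f$ is nondecreasing. For $z\geq0$ and $\s_2(t_0)>\s_2(t)>1$ you get $f(z)\leq f(\s_2(t)z)\leq f(\s_2(t_0)z)$, which is \eqv(laplace.5); but for $z<0$ the very same monotonicity flips the inequality, giving $f(z)\geq f^t(z)\geq f^{t_0}(z)$. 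A concrete example: $\phi=\1_{\cdot\geq1}+\1_{\cdot\geq -1}$. On $z>0$ one has $f\leq f^t\leq f^{t_0}$ (strictly somewhere), and on $z<0$ one has $f\geq f^t\geq f^{t_0}$ (strictly somewhere). So there is \emph{no} global ordering of the initial data, in either direction, and your Feynman--Kac step — which requires $w(0,\cdot)\geq0$ on all of $\R$ before it can deliver $w\geq0$ at later times — simply cannot be run as written.

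This is not a cosmetic point: the conclusion \eqv(laplace.6), as literally stated with hypotheses only on $x\geq0$, is \emph{false} for the heat/F-KPP flow in general. If one takes $f^t=f$ on $[0,\infty)$ and $f^t<f$ on $(-\infty,0)$, the full-line comparison you proved forces $v^t(s,x)\leq v(s,x)$ for all $x$, including $x\geq0$, which is the opposite of what \eqv(laplace.6) would assert; equality would have to hold, and it generically does not. The way out — which is what the paper is implicitly relying on, even though Lemma \thv(laplace.4) and its one-line ``proof'' do not spell it out — is to not aim for an exact ordering of $v^t$ everywhere, but to bound $v^t$ by solutions with initial data that are globally ordered majorants/minorants of $f^t$ agreeing with $f^t$ on $x\geq 0$, and then to show that the modification on the negative half-line does not change the tail asymptotics entering \eqv(neat.1) (which only samples $v^t$ at arguments $y+\sqrt 2 r>0$). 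Your proposal contains neither the observation that the global ordering fails nor the extra step needed to bridge the gap, so as it stands there is a genuine hole in the argument.
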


\begin{proof}
The proof is straightforward from the maximum principle, see Proposition 3.1 in \cite{B_C} resp. Proposition 6.4 in \cite{bbm-book}.
\end{proof}

With this information in mind we get the following slight generalisation of Proposition \thv(A.Prop1).

\begin{proposition}\TH(C.Prop1)
Let $u^t$ be a family of solutions to the F-KPP equation with initial data satisfying
\be \Eq(pointwise.1) 
u^t(0,x)\rightarrow u(0,x),
\ee
pointwise and monotone for $x\geq 0$, as $t\uparrow \infty$, where $u(0,x)$ satisfies
\begin{itemize}
\item[(i)] $0\leq u(0,x)\leq 1$;
\item[(ii)] for some $h>0$, $\limsup_{t\to\infty}\frac{1}{t}\log\int_t^{t(1+h)}u(0,y)\mbox{d}y\leq-\sqrt{2}$;
\item[(iii)] for some $v>0,\,M>0$, and $N>0$, it holds that $\int_{x}^{x+N}u(0,y)\mbox{d}y>v$, for all $x\leq -M$;
\item[(iv)] moreover, $\int_0^\infty u(0,y)y\eee^{2y}\mbox{d}y<\infty$.
\end{itemize}
Then we have, for $0<x=x(t)$ such that $\lim_{t\uparrow\infty} x(t)/t=0$ 
\be\Eq(tail.1.1)
\lim_{t\to\infty}e^{\sqrt{2}x}\eee^{x^2/2t}x^{-1}u(t,x+\sqrt{2}t-\sfrac 3{2\sqrt 2}\ln t),
\ee
where $C$ is a strictly positive constant that depends only on
 the initial condition $u(0,\cdot)$. 
 More precisely, 
 \be\Eq(mopre.1.1)
 C\equiv \lim_{r\uparrow\infty} \sqrt {\frac 2\pi} \int_0^\infty u(r,y+\sqrt 2r) \eee^{\sqrt 2y}ydy,
 \ee
 where $u$ is the solution of the F-KPP equation with initial condition $u(0,x)$.
\end{proposition}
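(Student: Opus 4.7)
The plan is to reduce the $t$-dependent family $u^t$ to the $t$-independent setting of Proposition \thv(A.Prop1) by sandwiching via the comparison principle of Lemma \thv(laplace.4), and then to show that the constant $C$ in \eqv(mopre.1.1) depends continuously on initial data under monotone limits. Assume without loss of generality that $u^t(0,\cdot)\nearrow u(0,\cdot)$ as $t\uparrow\infty$ (the decreasing case is symmetric). For any fixed $t_0$, Lemma \thv(laplace.4) applied to $1-u^{t_0}, 1-u^t, 1-u$ yields, for all $t\geq t_0$ and $z\geq 0$,
\[
u^{t_0}(s,z)\leq u^t(s,z)\leq u(s,z),\qquad s\geq 0.
\]

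First, I would check that Proposition \thv(A.Prop1) is applicable to both $u^{t_0}$ and $u$: assumption (i) is immediate from pointwise monotonicity, assumptions (ii) and (iv) pass automatically from $u$ to $u^{t_0}$ since $u^{t_0}(0,\cdot)\leq u(0,\cdot)$, and assumption (iii) holds for $u^{t_0}$ provided $t_0$ is taken sufficiently large (with possibly smaller $v$ and larger $N$), by monotone convergence of $\int_x^{x+N}u^{t_0}(0,y)\mbox{d}y$ toward $\int_x^{x+N}u(0,y)\mbox{d}y>v$. Applying Proposition \thv(A.Prop1) separately to $u^{t_0}$ and $u$, the displayed sandwich above then gives
\[
C(u^{t_0})\;\leq\;\liminf_{t\to\infty}\eee^{\sqrt{2}x}\eee^{x^2/2t}x^{-1}u^t\!\left(t,x+\sqrt{2}t-\sfrac{3}{2\sqrt 2}\ln t\right)\;\leq\;\limsup_{t\to\infty}(\cdots)\;\leq\;C(u),
\]
where $C(v):=\lim_{r\uparrow\infty}\sqrt{2/\pi}\int_0^\infty v(r,y+\sqrt 2 r)\eee^{\sqrt 2 y}y\,\mbox{d}y$ denotes the constant from \eqv(mopre.1.1) attached to an F-KPP solution $v$. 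To close the argument it remains to prove $C(u^{t_0})\to C(u)$ as $t_0\uparrow\infty$: the comparison principle at any finite time $r$ propagates the monotonicity, yielding $u^{t_0}(r,z)\nearrow u(r,z)$ pointwise, and monotone convergence in the $y$-integral then gives $\int_0^\infty u^{t_0}(r,y+\sqrt 2 r)\eee^{\sqrt 2 y}y\,\mbox{d}y\nearrow\int_0^\infty u(r,y+\sqrt 2 r)\eee^{\sqrt 2 y}y\,\mbox{d}y$ for each fixed $r$.

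The main obstacle is the interchange of the two limits $r\uparrow\infty$ and $t_0\uparrow\infty$ in the definition of $C$. One has pointwise convergence in $t_0$ for fixed $r$, and existence of the $r$-limit for each fixed $t_0$, so a Moore--Osgood type argument works provided one has uniform (in $t_0$) control of the rate of convergence in $r$. Such uniformity follows from the quantitative form of Bramson's convergence-to-travelling-wave estimates from \cite{B_C} which underlies the proof of Proposition \thv(A.Prop1), since the error bounds there depend on initial data only through the $\sqrt 2$-exponential moment in assumption (iv), and this moment is uniformly controlled by $\int_0^\infty u(0,y)\,y\,\eee^{2y}\mbox{d}y<\infty$ thanks to the domination $u^{t_0}(0,\cdot)\leq u(0,\cdot)$. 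This equates the liminf and limsup above to $C(u)$, which is the statement \eqv(tail.1.1) of the proposition, with constant given by \eqv(mopre.1.1).
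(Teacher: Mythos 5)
Your proposal takes a genuinely different route from the paper. You treat Proposition \thv(A.Prop1) as a black box, sandwich $u^{t_0}(s,\cdot)\leq u^t(s,\cdot)\leq u(s,\cdot)$ (for $t\geq t_0$, in the increasing case) via Lemma \thv(laplace.4), and reduce the problem to the assertion that $C(u^{t_0})\to C(u)$, i.e.\ that the Bramson constant is continuous in the initial data along monotone families. The paper instead \emph{reruns} Bramson's analysis inside the proof of Proposition \thv(A.Prop1), working directly with integral expressions of the type \eqv(neat.1). There the $t$-dependence enters both through the explicit kernel (as in the fixed-datum case) and through $v^t(r,\cdot)$; for fixed $r$ the passage $v^t(r,\cdot)\to v(r,\cdot)$ is handled by the comparison principle combined with monotone/dominated convergence inside the integral, and the subsequent $r\uparrow\infty$ limit is the same one that already appears in Proposition \thv(A.Prop1). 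That structure never requires comparing the \emph{limiting constants} $C(u^{t_0})$ and $C(u)$, so the Moore--Osgood interchange you identify as the ``main obstacle'' simply does not arise.

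The genuine gap in your argument is precisely the uniformity you invoke to close Moore--Osgood. You claim that Bramson's error bounds ``depend on initial data only through the $\sqrt 2$-exponential moment in assumption (iv)''. That is not correct as stated. The rate of convergence of $F(r,\cdot)\to C(\cdot)$ in \eqv(mopre.1) also hinges on conditions (ii) and (iii) (the right-tail decay and the uniform mass on the left), and the constants $v,M,N$ in (iii) for $u^{t_0}$ can degrade as $t_0$ varies; you yourself note this when verifying that Proposition \thv(A.Prop1) applies to $u^{t_0}$. Moreover $F(r,t_0)$ is not monotone in $r$, so no soft argument rescues the interchange. It is plausible that for this particular monotone family the uniformity holds, but establishing it requires opening Bramson's black box and tracking exactly how the error terms depend on the initial datum --- which is, in effect, what the paper's ``rerun'' does, but organised so that the comparison principle is applied at a fixed intermediate time $r$ rather than at $r=\infty$. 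If you want to salvage your route, the natural fix is to prove the other half of the Moore--Osgood hypotheses: uniformity in $r$ of the convergence $F(r,t_0)\to F(r,\infty)$ as $t_0\uparrow\infty$, which would follow from a quantitative comparison of $u^{t_0}(r,\cdot)$ with $u(r,\cdot)$ at all finite times; this is again nontrivial because the F-KPP nonlinearity does not control differences of solutions by differences of data without further work.

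Two smaller points. First, when you write the sandwich as an inequality of the $\liminf$/$\limsup$ against $C(u^{t_0})$ and $C(u)$, you should make explicit that the comparison $u^{t_0}(t,z)\leq u^t(t,z)\leq u(t,z)$ from Lemma \thv(laplace.4) is being evaluated at the moving point $z=x(t)+\sqrt 2 t -\sfrac{3}{2\sqrt 2}\ln t$, which requires that the lemma's comparison holds on the full range of $z$ visited (the lemma is stated for $x\geq 0$, which suffices here since $z\to+\infty$). Second, the reduction of the decreasing case to the increasing one by ``symmetry'' is fine, but the roles of $u^{t_0}$ and $u$ in the domination argument for the moment condition (iv) swap, and in that case the uniform bound on $\int_0^\infty u^{t_0}(0,y)y\eee^{2y}\,dy$ no longer follows from $u^{t_0}(0,\cdot)\leq u(0,\cdot)$; you would need to argue it from the monotone pointwise convergence directly.
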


\begin{proof} The proof is essentially a rerun of the proofs in the case of fixed initial condition
(see e.g. the proofs of Propositions 7.1 and  9.8 in \cite{bbm-book}. The main point is to control the limit of expressions of the type 
\be\Eq(neat.1)
\int_0^\infty v^t(r,y,\sqrt 2r) \eee^{\sqrt 2 y-\frac {(x(t)-y)^2}{2(t-r)}}
\left(1-\eee^{-2y \frac {x(t)}{t-r}}\right)dy.
\ee
The idea is always to take the pointwise limit in the integral as $t\uparrow
\infty$ and justify this by showing that the hypothesis of Lebesgue's dominated convergence theorem
are satisfied. The only new aspect here is that we also want to replace $v^t$ by its limit $v$. 
But this is precisely justified due to the maximum principle.
\end{proof}

Having established the tail asymptotics of the  solution, the remainder of the analysis of the Laplace 
functional  is now exactly 
the same as that of the law of the maximum in the preceding sections. This proves Theorem \thv(main.2).

\end{document}